\title{Good coverings of Alexandrov spaces}
\author{Ayato Mitsuishi}
\email{{mitsuishi@fukuoka-u.ac.jp}}
\address{{Department of Applied Mathematics, Fukuoka University, Jyonan-ku, Fukuoka-shi, Fukuoka 814-0180, JAPAN}}
\author{Takao Yamaguchi}
\thanks{This work was supported by JSPS KAKENHI Grant Numbers 26287010, 15H05739, 15K17529}
\email{takao@math.kyoto-u.ac.jp}
\address{Department of mathematics, Kyoto University, Kitashirakawa, Kyoto 606--8502, JAPAN}
\theoremstyle{plain}
\newtheorem{theorem}{Theorem}[section]
\newtheorem{lemma}[theorem]{Lemma}
\newtheorem{proposition}[theorem]{Proposition}
\newtheorem{definition}[theorem]{Definition}
\newtheorem{remark}[theorem]{Remark}
\newtheorem{assertion}[theorem]{Assertion}
\newcommand{\wangle}[0]{\tilde{\angle}}
\newcommand{\diam}[0]{\mathrm{diam}\,}
\newcommand{\e}[0]{\epsilon}
\newcommand{\vol}[0]{\mathrm{vol}}
\newcommand{\supp}[0]{\mathrm{supp}}
\newcommand{\Lip}[0]{\mathrm{Lip}}
\newcommand{\relmiddle}[1]{\mathrel{}\middle#1\mathrel{}}
\newcommand{\beq}[0]{\begin{equation}}
\newcommand{\eeq}[0]{\end{equation}}
\newcommand{\benum}[0]{\begin{enumerate}}
\newcommand{\eenum}[0]{\end{enumerate}}
\begin{document}
\begin{abstract}
In the present paper, we define a notion of good coverings of 
Alexandrov spaces with curvature bounded below, and prove that
every Alexandrov space admits such a good covering and that
it has the same homotopy type as the nerve of  {the} good covering.
We also prove the stability of the isomorphism classes of the nerves of good coverings 
in the non-collapsing case. In the proof, we need a version of Perelman's fibration 
theorem, which is also proved in this paper.
\end{abstract}

\maketitle

\section{Introduction}
It is well known that there are relations between coverings and topology
of spaces. In Riemannian geometry, Weinstein \cite{We} found homotopy type 
finiteness of even-dimensional closed Riemannian manifolds of
positively pinched curvature by covering those manifolds via convex balls whose number is 
uniformly bounded. Then Cheeger \cite{Ch} extended this result to 
diffeomorphism finiteness by using a gluing method
to a wider class of closed Riemannian manifolds with 
bounded sectional curvature. 
In the context of a lower sectional curvature bound,
Grove and Petersen \cite{GP} succeeded to have a uniform bound
on the number of metric balls, which 
are contractible {\it in} a larger concentric balls, 
needed to cover those Riemannian manifolds.
See also \cite{Yam:homo}, \cite{GPW}, \cite{Pets} for related results.
Covering methods are also useful to obtain bounds 
on the total Betti numbers. See \cite{Gr}, \cite{Yam:essen} for instance.

A covering of a topological space (resp. smooth manifolds) is 
called {\it good} if every nonempty finite intersection of elements 
in the covering is contractible (resp. diffeomorphic to an Euclidean space).
See for instance \cite{BT}.
In the present paper, we introduce a notion of 
good coverings of Alexandrov spaces with curvature bounded below.　

Let $M$ be an Alexandrov space with curvature bounded below.
An open set $U$ of $M$ is called 
{\it conical} and strongly Lipschitz contractible (SLC in short) 
if it is homeomorphic to the tangent cone 
at a point $p\in U$ and is strongly Lipschitz contractible 
to $p$ (see Sections \ref{sec:SLC} and \ref{sec:good-cov} for precise definitions).
An open set $U$ is called convex if every minimal geodesic segment 
joining any two points of $U$ is contained in $U$.

We say that a locally finite covering $\mathcal U=\{ U_i\}$ of $M$ is {\it good} if
every nonempty intersection $U_{i_0}\cap\dots\cap U_{i_m}$ is 
a convex, conical SLC bounded domain. 

The main results of the present paper are stated as follows. 

\begin{theorem}\label{thm:good-cover}
For every open covering $\mathcal V$ of an Alexandrov space $M$, 
\begin{enumerate}
\item there exists a locally finite refinement $\mathcal U$ of $\mathcal V$
which is a good covering. 
\item $M$ has the same homotopy type as the nerve of 
any good covering of it.
\end{enumerate}
\end{theorem}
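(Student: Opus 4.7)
The plan is to prove (1) and (2) in sequence, with (1) the geometric core and (2) following the classical nerve-theorem pattern once good coverings are in hand.

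For part (1), I would first construct, at each point $p \in M$, a fundamental system of neighborhoods that are convex, conical with cone point $p$, and SLC. The existence of such neighborhoods uses Perelman's theorem on the local conical structure of Alexandrov spaces together with strong convexity results (as in Perelman--Petrunin and subsequent work). Strong Lipschitz contractibility to $p$ is provided by the gradient flow of a suitable semiconcave function built from the distance to $p$, whose trajectories retract the neighborhood onto $p$ in a controlled way. Starting from the given cover $\mathcal V$, I would select at each $p$ such a neighborhood contained in some element of $\mathcal V$, shrink so that diameters are locally bounded, and extract a locally finite refinement $\mathcal U = \{U_i\}$ by paracompactness of $M$.

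The heart of (1) is that each nonempty intersection $W = U_{i_0} \cap \cdots \cap U_{i_m}$ is again convex, conical, and SLC. Convexity of $W$ is automatic, since convexity is preserved under intersection. The conical and SLC properties, however, refer to a specific cone point $q \in W$, and the cone points of the $U_{i_j}$ need not lie in $W$. The natural approach is to take $q$ to be the point of $W$ that maximizes a suitable concave combination of the semiconcave functions defining the $U_{i_j}$, apply the generalized gradient flow of this function to produce a Lipschitz strong deformation retraction of $W$ onto $q$, and use the local conical structure at $q$ to identify $W$ with a neighborhood of the apex in $T_q M$. The main obstacle is precisely ensuring that all these steps go through uniformly: one must choose the initial $U_p$'s small enough, relative to local quantitative invariants, so that every nonempty intersection fits inside a single conical chart at its distinguished point, while still refining $\mathcal V$. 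This is the step that calls on the machinery developed earlier in the paper.

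For part (2), I would follow the classical nerve-theorem strategy. Choose a locally finite partition of unity $\{\rho_i\}$ subordinate to $\mathcal U$ and define $f \colon M \to |N(\mathcal U)|$ by $f(x) = \sum_i \rho_i(x)\, v_i$; this is well-defined because indices $i$ with $\rho_i(x) > 0$ satisfy $x \in \bigcap_i U_i \neq \emptyset$, so the corresponding vertices span a simplex of the nerve. I would then construct a homotopy inverse $g \colon |N(\mathcal U)| \to M$ by induction on the skeleta of $N(\mathcal U)$: send each vertex $v_i$ to the cone point of $U_i$, and extend over each $k$-simplex $[v_{i_0}, \ldots, v_{i_k}]$ by coning the already-defined boundary map into $U_{i_0} \cap \cdots \cap U_{i_k}$ via its SLC structure. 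The homotopies $f \circ g \simeq \mathrm{id}$ and $g \circ f \simeq \mathrm{id}$ are built by the same inductive extension procedure, now on the nerve of a common refinement; the Lipschitz nature of the SLC contractions (rather than mere contractibility) is crucial for patching infinitely many local homotopies into a continuous global one when the covering is only locally finite.
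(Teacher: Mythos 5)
Your overall scaffold for (1) and (2) matches the paper's plan -- the sets in the covering are superlevel sets of strictly concave functions produced by the Perelman--Kapovitch averaging construction, intersections are treated by taking the minimum of the defining functions (which is again strictly concave), $F\colon M\to|K|$ is a partition-of-unity map, and $G$ is built by induction on skeleta (on the barycentric subdivision) by sending each barycenter to the maximizer of the corresponding restricted function and then coning via the SLC structure. But there are two points where your route diverges from, or glosses over, what actually has to be done.

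The principal gap is in how you propose to establish that each nonempty intersection $W = U_{i_0}\cap\cdots\cap U_{i_m}$ is \emph{conical}. You write that once $W$ is SLC to its distinguished point $q$, one can ``use the local conical structure at $q$ to identify $W$ with a neighborhood of the apex in $T_qM$,'' and that the worry is only to make $W$ small enough to fit inside a single conical chart at $q$. That mechanism does not work: an open subset of a cone containing the apex, even one that is strongly Lipschitz contractible to the apex, need not itself be a cone, so ``$W$ SLC to $q$ and $W$ contained in a conical chart'' does not yield ``$W$ conical.'' In the paper, the conical property of $W=\{\min_j\varphi_{i_j}>0\}$ is obtained from Theorem \ref{thm:CSLC}(2), whose proof runs through the fibration theorem (Theorem \ref{thm:fibration}): because the strictly concave defining function is regular on $\{\max > \varphi > c\}$, that region is a fiber bundle over an interval and hence $\{\varphi>c\}$ is the open cone on a fiber. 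The delicacy is entirely concentrated at $\partial M$, where the needed fibration theorem for semiconcave functions was not available before and is precisely what Section \ref{sec:appendix} supplies (including the requirement that the canonical double extension of $\varphi$ be semiconcave and regular, which forces the equivariant construction of the $\varphi_i$'s). Your proposal does not invoke any form of the fibration theorem, so the conical condition -- one of the two substantive conditions in the definition of a good covering -- is not actually established.

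On part (2), your plan to build \emph{both} homotopies $F\circ G\simeq\mathrm{id}$ and $G\circ F\simeq\mathrm{id}$ by inductive extension is a genuinely different route from the paper's. The paper constructs an explicit homotopy only for $F\circ G\simeq\mathrm{id}_{|K|}$ (using the componentwise minimum $h_i(x)=\min\{x_i, f_i(G(x))\}$ to stay inside a common simplex); for the other direction it does not attempt a direct global homotopy, but instead shows that $G_*\colon\pi_m(|K|)\to\pi_m(M)$ is an isomorphism for all $m$ (by a simplicial approximation argument over a fine triangulation of $S^m$, using the good-cover condition at each step) and then concludes by Whitehead's theorem, which applies since $M$ is locally contractible and finite-dimensional. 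Your direct construction of $G\circ F\simeq\mathrm{id}_M$ by ``inductive extension on the nerve of a common refinement'' would need a more careful statement of what is being refined and how the infinitely many local SLC contractions are patched; the Whitehead route sidesteps that. If you do pursue the direct homotopy, you should spell out why the inductively chosen contraction targets are compatible simplex-by-simplex.

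A minor point: your claim that convexity of intersections is automatic is correct as stated, since any minimal geodesic between two points of $W$ lies in each $U_{i_j}$ and hence in $W$; no smallness hypothesis is needed there.
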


Theorem \ref{thm:good-cover}(2) explicitly provides the homotopy type of any  
Alexandrov space from the information of a good covering.
%
%

Let $\mathcal A(n,D,v_0)$ denote the set of all isometry classes of 
$n$-dimensional compact Alexandrov spaces $M$ with
\[
 { \text{curvature} \ge -1,\,\,  \diam(M)\le D, \,\,  \vol(M)\ge v_0>0.}
\]
We have the following stability of the nerves of good coverings 
of Alexandrov spaces.

\begin{theorem} \label{thm:good-stab}
There exist a positive number $\e_0=\e_0(n,D,v_0)$, finitely many 
$M_1,\ldots, M_N \in \mathcal A(n,D,v_0)$ and finite simplicial complexes 
$K_1, \ldots, K_N$ such that 
\begin{enumerate}
\item $\mathcal A(n,D,v_0) = \bigcup_{i=1}^N U_{GH}(M_i,\epsilon_0);$
\item for any $M\in U_{GH}(M_i,\epsilon_0)$, there exists a good covering $\mathcal U_M$ of 
 $M$ whose nerve is isomorphic to $K_i$. 
\end{enumerate}
\end{theorem}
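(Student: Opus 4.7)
The plan is to combine three ingredients: Gromov's precompactness theorem, the existence half of Theorem~\ref{thm:good-cover}, and the stability of the underlying metric data used in building a good covering under Gromov--Hausdorff convergence.

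\emph{Reduction by compactness.} By Gromov's precompactness theorem, $\mathcal A(n,D,v_0)$ is precompact in the Gromov--Hausdorff topology. It therefore suffices to prove the local statement: for each $M\in\mathcal A(n,D,v_0)$ there exist $\e_M>0$, a good covering $\mathcal U_M$ of $M$ with nerve $K_M$, and for every $M'\in U_{GH}(M,\e_M)$ a good covering of $M'$ whose nerve is isomorphic to $K_M$. Extracting a finite subcover $\{U_{GH}(M_i,\e_{M_i})\}_{i=1}^N$ and setting $\e_0:=\min_i\e_{M_i}$ (possibly after shrinking) yields the theorem.

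\emph{Transfer of the covering.} Fix $M$ and apply Theorem~\ref{thm:good-cover}(1) to produce a good covering $\mathcal U_M=\{U_\alpha\}_{\alpha=1}^k$, and record the metric data supplied by the construction: apex points $p_\alpha$, scales, strictly concave distance-like functions, and their level/sublevel sets. For $M'$ sufficiently close to $M$ in the Gromov--Hausdorff topology, pick an approximation $\phi\colon M\to M'$ and set $p_\alpha':=\phi(p_\alpha)$. The version of Perelman's fibration theorem proved elsewhere in this paper implies that the analogous construction on $M'$, based at $p_\alpha'$ with essentially the same scale, yields an open set $U_\alpha'\subset M'$ that is convex, conical (homeomorphic to $T_{p_\alpha'}M'$), and SLC to $p_\alpha'$. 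The combinatorial pattern of nonempty intersections is preserved if $\e_M$ is small enough: any interior point of $\bigcap_{\alpha\in I}U_\alpha$ has a nearby counterpart lying in $\bigcap_{\alpha\in I}U_\alpha'$, and empty intersections persist by the uniform continuity and boundedness of the construction. Hence $\{U_\alpha'\}$ is a good covering of $M'$ with nerve isomorphic to $K_M$.

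\emph{Main obstacle.} The principal difficulty is that Perelman's topological stability theorem alone provides only a homeomorphism $\Phi\colon M\to M'$, and $\Phi(U_\alpha)$ is generally neither convex nor conical in the metric sense on $M'$. The transfer cannot be purely topological; it must go through the constructive, fibration-theoretic stability furnished by the paper's version of Perelman's fibration theorem. Verifying that \emph{convex}, \emph{conical}, and \emph{SLC} all persist simultaneously under small GH perturbations at the same combinatorial intersection pattern is the technical core of the argument.
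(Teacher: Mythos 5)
Your overall strategy — reduce to a local statement at each $M$, build a good covering there, transfer it to nearby spaces, then extract a finite subcover by precompactness — matches the paper's. But there are two substantive gaps in the transfer step that the paper resolves and your sketch leaves open.

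\emph{Wrong key lemma for the transfer.} You invoke the paper's version of Perelman's fibration theorem as the mechanism that allows the ``analogous construction'' on $M'$ to succeed. The fibration theorem is used only to certify the \emph{conical} property of individual sets; it says nothing about how the concave functions $\varphi_\alpha$ themselves behave when you change the ambient space by a small Gromov--Hausdorff distance. What actually does the work is the stability of Kapovitch/Mitsuishi--Yamaguchi (Lemma~\ref{lem:stable SLC} in this paper): the explicit construction of $\varphi_\alpha$ as a minimum of averaged compositions $\chi\circ d_{x_{\alpha\beta}}$ converges, uniformly, to the corresponding construction on $M'$ under non-collapsing GH convergence, with control on the concavity modulus. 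Without that lemma there is no reason the pushed-forward data on $M'$ give strictly concave functions at all, let alone ones whose superlevel sets are convex conical SLC domains.

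\emph{Nerve isomorphism requires a margin, not just continuity.} You assert that ``empty intersections persist by the uniform continuity and boundedness of the construction.'' That direction is exactly where continuity alone is insufficient: an intersection $\bigcap_{\alpha\in I}U_\alpha'$ could become nonempty even though $\bigcap_{\alpha\in I}U_\alpha$ is empty, because the superlevel thresholds can be grazed. The paper handles this by a quantitative gap argument: it defines $m:=\min\{\,\max_{U_T}\varphi_T : U_T\neq\emptyset\,\}>0$ (where $\varphi_T=\min_{i\in T}\varphi_i$), shrinks the covering to $V_i=\{\varphi_i>\delta_0 m\}$ on $M$, and uses the offset threshold $\{\varphi_i'>\delta_0 m/2\}$ on $M'$. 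With the uniform $C^0$-closeness $|\varphi_i - \varphi_i'\circ\psi|<\epsilon_1$ and the clearance $\delta_0 m/2$, a point of $V_T'$ pushes back to a point of $U_T$ in $M$; then flowing up the gradient of $\varphi_T$ returns one to $V_T$, because $\max_{U_T}\varphi_T\ge m>\delta_0 m$. This two-step (push back, then flow up) is what makes ``nonempty $\Leftrightarrow$ nonempty'' rigorous, and it is absent from your sketch. Adding Lemma~\ref{lem:stable SLC} and this margin argument would close the proof.
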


Here $U_{GH}(M,\epsilon)$ denotes the $\epsilon$-neighborhood of $M$ in the Gromov-Hausdorff distance.

\begin{remark} \upshape
Theorem \ref{thm:good-stab} is new even in the case {when} $M$ is a Riemannian manifold.
Together with Theorem \ref{thm:good-cover}, it enables us to compute the homotopy type of $M$
in terms of only the covering data of a good covering having only finite types.
\end{remark}

In the course of the proofs of Theorems \ref{thm:good-cover} and \ref{thm:good-stab}, we obtain
the following, which is needed in the proof of Theorem \ref{thm:good-cover}(1) to show the conical 
property in the conditions of good coverings.

\begin{theorem} \label{thm:CSLC}
Let $f:U\to \mathbb R$ be a proper strictly concave function defined on
 a connected open subset $U$ of an Alexandrov space $M$. Then 
\begin{enumerate}
\item there is a point $p \in U$ such that $\Omega:=\{ f\ge a\}$ is convex SLC to $p$ 
 for any $a$ with $\inf_{U} f < a< \max_{U} f$.
\item $\Omega$ is conical if either $\Omega$ does not meet $\partial M$, or 
 $\Omega$ meets $\partial M$ and the function $\tilde f:D(\Omega)\to \mathbb R$ 
 naturally induced by $f$ on the double $D(\Omega)$ of $\Omega$ is strictly 
 concave. 
\end{enumerate}
\end{theorem}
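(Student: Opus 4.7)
The plan is to take $p$ to be the unique maximum of $f$ on $U$ and to derive both the SLC statement and the cone homeomorphism from a single geometric object, namely the gradient flow of $f$. Properness makes every super-level set $\{f\ge c\}$ compact, so $f$ attains a maximum on $U$; strict concavity rules out two maxima, since the midpoint of any minimal geodesic between them would have strictly larger value. The same concavity argument shows $p$ is the \emph{only} critical point of $f$ in $U$, because at any other point the minimizing geodesic toward $p$ carries a positive directional derivative. Convexity of $\Omega=\{f\ge a\}$ is then immediate: for any two points of $\Omega$, a minimal geodesic between them lies in a neighborhood contained in $U$ (after shrinking if necessary and extending via the flow), and concavity of $f$ along it forces $f\ge a$ throughout.

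For the SLC statement in (1), I would invoke Petrunin's gradient flow theory for semiconcave functions on Alexandrov spaces. The flow $\Phi_t$ of $f$ is defined for all $t\ge 0$ since the orbits remain in the compact set $\{f\ge f(x)\}\subset U$, is Lipschitz in both variables with constants depending on the concavity modulus of $f$, and satisfies $\tfrac{d}{dt}f(\Phi_t(x))=|\nabla f|^{2}$. Uniqueness of the critical point forces every orbit to converge to $p$. A standard time reparametrization $H(x,s):=\Phi_{s/(1-s)}(x)$ for $s<1$, with $H(x,1):=p$, gives a Lipschitz deformation of $\Omega$ to $p$ staying inside $\Omega$ (by convexity, the flow preserves $\Omega$), which is the SLC property.

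For the conical statement of (2) in the interior case, the plan is to use the Perelman-type fibration theorem announced in the introduction. Applied to $f$ on $U\setminus\{p\}$, where $|\nabla f|>0$, it gives a topologically trivial fibration with all fibers homeomorphic to a fixed level set, which by the local conical neighborhood theorem at $p$ is homeomorphic to $\Sigma_p$. Combining this with a cone neighborhood $\{f\ge c_0\}\cong K(\Sigma_p)$ of $p$ and using the gradient flow to glue the two pieces along a regular level set yields a global homeomorphism $\Omega\cong K(\Sigma_p)=T_pM$.

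For the boundary case in (2), I would form the double $D(\Omega)$ along $\Omega\cap\partial M$, which is an Alexandrov space without boundary of the same lower curvature bound. By hypothesis $\tilde f$ is strictly concave on $D(\Omega)$; it has the same unique maximum $p$, so the interior-case argument applied inside $D(\Omega)$ gives $D(\Omega)\cong T_pD(\Omega)$. The doubling involution is an isometry fixing $p$ and induces the reflection involution on the tangent cone; taking the quotient by (equivalently, one half of) this involution recovers $\Omega\cong T_pM$. The main obstacle throughout will be the interior conical statement in (2), since existence of $p$, convexity, and SLC are comparatively soft consequences of strict concavity and the gradient flow, whereas matching the global fibration structure on $\Omega\setminus\{p\}$ with the local cone structure at $p$ requires precisely the version of Perelman's fibration theorem the paper sets out to prove.
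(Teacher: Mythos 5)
Your overall plan (uniqueness of the maximum $p$, convexity from concavity along geodesics, conical via the fibration theorem, doubling for the boundary case) matches the paper's approach at the structural level. But the SLC step has a genuine gap, and it is precisely the gap the paper goes out of its way to flag.

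You propose to let $\Phi_t$ be the gradient flow of $f$ itself, reparametrize time by $s\mapsto s/(1-s)$, and declare $H(x,s):=\Phi_{s/(1-s)}(x)$ "a Lipschitz deformation," calling this "standard." It is not standard, and the paper's Remark \ref{rem:SLC} is devoted to exactly this point: already for the model function $f(x)=-x^2/2$ on $[-1,1]$ the gradient orbit $\alpha(t)=e^{-t}$ takes infinite time to reach $p$. A reparametrization compressing $[0,\infty)$ to $[0,1]$ is only Lipschitz if the speed $|\nabla f|_{\Phi_t(x)}\cdot(1-s)^{-2}=|\nabla f|_{\Phi_t(x)}(1+t)^{2}$ is uniformly bounded, i.e.\ if one has a quantitative (exponential-type) decay estimate for $|\nabla f|$ along orbits. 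You supply no such estimate, and it is not a formality in the Alexandrov setting: the naive upper bound $|\nabla f|\lesssim\sqrt{f(p)-f}$ uses a geodesic issuing from $x$ in the direction $\nabla f/|\nabla f|$, which need not exist, and in fact $|\nabla f|$ can have a jump discontinuity at the maximum point. This is exactly why the paper develops Section \ref{sec:SLC}: instead of reparametrizing the flow of $f$, it shows $(d_p)'(\nabla_x f)\le -(c/2)r$ on $\{d_p\ge r\}$, applies Theorem \ref{thm:SLC} (which builds a \emph{finite-time} Lipschitz "reverse flow" by consecutively gluing gradient flows of distance functions $d_{f^{-1}(\bar r)}$ to level sets, with Lemma \ref{lem:implicit function} controlling the Lipschitz regularity of the hitting times) to contract $\Omega$ onto a small ball $B(p,r)$, and then splices in the local contraction of $B(p,r)$ guaranteed by Theorem \ref{thm:SLCB}. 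If you wish to retain your direct route, you must actually prove the exponential decay of $|\nabla f|$ along gradient curves under $(-c)$-concavity and verify the resulting Lipschitz bound, rather than assert it; as written, the argument has a hole where the paper's entire technical apparatus sits.

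Two smaller remarks. First, the flow-invariance of $\Omega=\{f\ge a\}$ follows immediately because $f$ increases along its own gradient flow; invoking convexity for this is backwards. Second, the monotone non-increase of $d_p$ along the deformation, which is part of the SLC definition, should be checked (it does follow from $(d_p)'(\nabla_x f)\le -f'(\uparrow_x^p)<0$, but you need to say so). Your treatment of part (2), both in the interior via the fibration theorem and on the boundary via the double, agrees with the paper.
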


Here the double $D(\Omega)$ is defined as the disjoint union $\Omega \amalg \Omega$ glued along
their boundaries $\Omega\cap\partial M$.\par

In Theorem \ref{thm:CSLC}(2), we have counter examples if we drop the assumption on $\tilde f$.
It should also be remarked that in Theorem \ref{thm:CSLC}, the gradient flow of a strictly concave function $f$ 
might take infinite time to reach the unique maximum point of $f$ in general. 
Therefore the gradient flow of $f$ is not enough for the construction of a strong Lipschitz contraction, and 
we need additional 
arguments in the proof.

In the proof of Theorem \ref{thm:good-cover}(1), we also need to establish a version of 
Perelman's fibration theorem:

\begin{theorem}[cf. \cite{Per Alex II}, \cite{Per}, \cite{Per DC}] \label{thm:fibration}
Let $f:U\to \mathbb R$ be a proper semiconcave function defined on an open set $U$ 
of an Alexandrov space. If one of the following conditions holds, then $f$ is a fiber bundle over $f(U)$:
\begin{enumerate}
 \item if  $U$ does not meet $\partial M$, $f$ is regular on $U$,
 \item if  $U$ meets $\partial M$,  the canonical extension of $f$ to the double of $U$ is also 
          semiconcave and is  regular on it. 
\end{enumerate}
\end{theorem}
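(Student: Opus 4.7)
The plan is to reduce both cases to a core interior-fibration statement along Perelman's original lines, and then to handle the boundary case by doubling.

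For (1), I would use the generalized gradient flow $\Phi_t$ of $f$ on $U$, which exists as a Lipschitz semiflow by Petrunin's theory since $f$ is semiconcave. Regularity means $|\nabla f|(x) > 0$ everywhere on $U$, and properness of $f$ together with lower semicontinuity of $|\nabla f|$ yields a uniform positive lower bound for $|\nabla f|$ on $f^{-1}(K)$ for every compact $K \subset f(U)$. Consequently $t \mapsto f(\Phi_t(x))$ is strictly increasing with uniformly positive derivative on such preimages, and for each $x$ and each $c$ near $f(x)$ there is a unique first-passage time $\tau(x,c)$ with $f(\Phi_{\tau(x,c)}(x)) = c$, depending continuously on $(x,c)$. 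Fixing $c_0 \in f(U)$ and $p \in f^{-1}(c_0)$, together with a small neighborhood $V \subset f^{-1}(c_0)$ of $p$ and an interval $I \ni c_0$, the map
\[
\Psi : V \times I \longrightarrow U, \qquad \Psi(x,c) = \Phi_{\tau(x,c)}(x),
\]
is the candidate local trivialization of $f$ over $I$.

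The main work, and the step I expect to be the main obstacle, is to verify that $\Psi$ is a homeomorphism onto an open subset of $U$. The gradient flow of a semiconcave function on an Alexandrov space is only Lipschitz, and the level set $f^{-1}(c_0)$ is not a priori a topological manifold, so local triviality is not automatic from bijectivity and continuity. Following Perelman, I would complement $f$ near $p$ by suitably chosen distance functions $g_1,\dots,g_{n-1}$ from points in general position so that $F=(f,g_1,\dots,g_{n-1})\colon U'\to \mathbb R^n$ is a regular admissible map. Perelman's fibration theorem for admissible maps then supplies a local product chart for $F$, and composing with the gradient flow of $f$ identifies the $f$-slicing in this chart with $\Psi$; invariance of domain together with continuity of $\tau$ gives that $\Psi$ is an open embedding. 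Independence of the trivializations under change of $c_0$ and $p$ is standard.

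For (2), I would pass to the double $D(U)$, which is an Alexandrov space of the same curvature lower bound by the doubling theorem of Perelman-Petrunin. The hypothesis provides a proper semiconcave function $\tilde f \colon D(U) \to \mathbb R$ which is regular on $D(U)$, and $D(U)$ no longer meets the portion of $\partial M$ coming from $U\cap \partial M$, so case (1) applies to $\tilde f$ and yields a fiber bundle $\tilde f\colon D(U)\to f(U)$. The deck involution $\iota\colon D(U)\to D(U)$ swapping the two copies of $U$ satisfies $\tilde f\circ\iota=\tilde f$, and by uniqueness the gradient flow of $\tilde f$ is $\iota$-equivariant; hence the trivializations $\Psi$ constructed in (1) centered at a fixed point of $\iota$ (i.e.\ at a point of $U\cap\partial M$) can be chosen $\iota$-invariant on the $V$-factor. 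They therefore descend to local trivializations of $f=\tilde f/\iota$ over $I$, yielding the fiber bundle structure in case (2). Points of $U\setminus \partial M$ are handled directly by (1) applied in $U$ itself, which is an open suborbifold-free region of $D(U)$.
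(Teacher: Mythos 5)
Your strategy for the boundary case---pass to the double, exploit the $\iota$-equivariance of the gradient flow, and descend the trivializations---is conceptually the same starting point as the paper's. But there is a genuine gap at the step where you assert that the trivializations from case (1) ``can be chosen $\iota$-invariant on the $V$-factor,'' and this is precisely the hard part of the theorem.

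The problem is twofold. First, the candidate map $\Psi(x,c)=\Phi_{\tau(x,c)}(x)$ built from the gradient flow is not injective: a gradient semiflow of a semiconcave function contracts distances and merges flow lines, and there is no backward uniqueness in Alexandrov spaces. So a local trivialization cannot come from the flow alone. You acknowledge this in (1) and invoke Perelman's admissible complementation to produce a product chart---but once you do that, the resulting trivialization is the one coming from Perelman's machinery (auxiliary functions $g$, the function $h=g-H\circ f$, the cone structure of the slice, Siebenmann's theorem), not from $\Phi_t$, and it carries no automatic $\iota$-equivariance. Equivariance of $\Phi_t$ does not transfer to it.

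Second, for the descent to produce a fibration of $f:U\to f(U)$ you ultimately need $f$ itself to satisfy the hypotheses of Siebenmann's theorem: each fiber $f^{-1}(v)$ must be an MCS-space and $f$ a topological submersion; a trivialization of $\tilde f$ whose fiber is the double $D(f^{-1}(v))$ does not directly give either. The paper's proof handles this by a backward induction on the number of components together with an \emph{equivariant} version of Perelman's complementation: the auxiliary function $\tilde g$ is built from an $r$-invariant discrete net so that $\tilde h$ is $r$-equivariant, and then a careful fiber analysis shows $\tilde K_\rho\cap\tilde h^{-1}(0)\subset\partial M$, that the regular slice $\Pi_\rho$ is an MCS-space (a single point in the codimension-one case, so $\tilde F_v$ is an arc whose $\iota$-quotient is a half-arc), and that $\tilde F_v\cap\tilde h^{-1}(0)=\tilde F_v\cap\partial M$. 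Those steps are what actually make the product chart descend; they are missing from the proposal, and without them the argument does not close.
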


Theorem \ref{thm:fibration} was proved for admissible functions $f$ on $U$ possibly with boundary 
in \cite{Per Alex II} and \cite{Per}, for semiconcave functions on $U$ without boundary in \cite{Per DC}. 
Our contribution is in the case when $f$ is a semiconcave function and $U$ meets $\partial M$.


The organization of the paper is as follows. 
In Section \ref{sec:prelim}, we briefly recall several notions
about Alexandrov spaces, SLC neighborhoods and semiconcave
functions.
In Section \ref{sec:SLC}, we prove that a metric ball is 
SLC if the distance function from the center 
is regular on the ball. This extends a previous result in 
\cite{MY:SLC}. To achieve this, we develop a consecutive gluing method
of gradient flows of several distance functions by proving the Lipschitz
regularity of an implicit function.
Such a gluing procedure is turned out to be useful also in the proof of Theorem \ref{thm:CSLC}.
In Sections \ref{sec:good-cov} and 
\ref{sec:good-stab}, we prove Theorems \ref{thm:good-cover} 
and \ref{thm:good-stab} respectively by making use of Theorem \ref{thm:CSLC}.
Theorem \ref{thm:fibration} is proved in Section \ref{sec:appendix}.

\section{Preliminaries} \label{sec:prelim}

Let us recall the definition of Alexandrov spaces and related fundamental facts. 
For more details, we refer to \cite{BGP} and \cite{BBI}.
Throughout the present paper, we denote by $|xy|$ the distance between points $x$ and $y$ in a metric space.
{For a metric space $X$, $x \in X$ and $r > 0$, we denote by $U(x,r)$, $B(x,r)$ and $S(x,r)$, the open $r$-ball, the closed $r$-ball and the metric $r$-sphere around $x$, respectively.}

\subsection{Basics of Alexandrov spaces}
A metric space is said to be geodesic if any two points in the space can be joined by a minimal geodesic, where a minimal geodesic is an isometric embedding from an interval.

We say that a geodesic complete metric space $M$ is an {\it Alexandrov space} (of curvature bounded locally from below) if for each $p \in M$, there exist $r > 0$ and $\kappa \in \mathbb R$ such that for any distinct four points $a_i \in B(p,r)$, $i=0,1,2,3$ with $\max_{1 \le i < j \le 3} \{|a_0 a_i|+ |a_0 a_j|+ |a_i a_j|\} < \pi/\sqrt \kappa$ if $\kappa > 0$, we have 
\[
\sum_{1 \le i<j \le 3} \wangle_\kappa a_i a_0 a_j \le 2 \pi.
\]

Here, $\wangle_\kappa b a c$ denotes the inner angle of a geodesic triangle of length $|ab|$, $|bc|$ and $|ca|$, at the vertex with opposite side of length $|bc|$, in a simply-connected complete surface of curvature $\kappa$.
In the present paper, we only deal with finite-dimensional Alexandrov spaces.

From now on, let $M$ denote an $n$-dimensional Alexandrov space. 
For an Alexandrov space $M = (M,|\cdot, \cdot|)$ and $r > 0$, we denote by $r M$ the space $(M, r |\cdot, \cdot|)$. 
For $p \in M$, the pointed Gromov-Hausdorff limit of $(rM, p)$ as $r \to \infty$ always exists and is denoted by $(T_p M, o)$, which is called the {\it tangent cone} of $M$ at $p$.
An element of $T_p M$ is called a vector. 
For two vectors $v,w \in T_p M$, we set $\left<v,w \right> = |v||w| \cos \angle v o w$ if $|v| \neq 0 \neq |w|$ and $\left< v,w \right> = 0$ otherwise, where $|v|$ is the distance from $v$ to the origin $o$.

For $p \in M$, the set of all non-trivial unit-speed geodesic starting at $p$ is denoted by $\Sigma_p'$, which admits an equivalent relation defined by $\gamma \sim \sigma$ if and only if $\angle(\gamma, \sigma) = \lim_{s,t \to 0} \tilde \angle_\kappa \gamma(s) p \sigma(t) = 0$, for fixed $\kappa$.
The equivalent class of $\gamma$ is denoted by $\gamma^+(0)$, where $\gamma$ is assumed to be parametrized $\gamma(0) = p$. 
Then, $\angle$ is a metric on the set of all equivalent classes. 
The completion of it by $\angle$ is denoted by $\Sigma_p$ and is called the {\it space of directions} at $p$. 
Each element of $\Sigma_p$ is called a direction.
For $q \neq p$, we denote by $\uparrow_p^q \,\in \Sigma_p$ the direction of a minimal geodesic from $p$ to $q$ at $p$. 

The tangent cone $T_p M$ is isometric to the Euclidean cone over $\Sigma_p$.
So, any vector $v \in T_p M$ can be written as $v = a \xi$ for some $a \ge 0$ and $\xi \in \Sigma_p$.

For a Lipschitz curve $c : [0,a] \to M$, it has the direction at $t = 0$ if $\varlimsup_{s, t \to 0} \tilde\angle_\kappa c(s)c(0)c(t) = 0$ holds, for some fixed $\kappa$.
Then, the vector $c^+(0)$ is canonically defined as the limit of $|c(0)c(t)| \uparrow_{c(0)}^{c(t)}$ as $t \to 0$.

The boundary $\partial M$ is defiend as the set of all points $p \in M$ such that $\Sigma_p$ has non-empty boundary. 
Here, one-dimensional Alexandrov spaces are one-dimensional Riemannian manifolds possibly with boundary, whose boundaries are defiend as the boundaries of manifolds. 


\subsection{Strong Lipschitz contractibility}

\begin{definition}[\cite{MY:SLC}] \upshape
Let $X$ be a metric space, $p \in X$ and $r > 0$.
We say that a subset $\Omega$ of $X$ is {\it strongly Lipschitz contractible} (abbreviated by SLC) to some point $p \in \Omega$, 
if there is a map 
\[
H : \Omega \times [0,1] \to \Omega
\]
which is Lipschitz in the sense that 
\[
|H(x,s)H(y,t)| \le A|xy| + B|s-t|
\]
holds on the domain for some $A,B \ge 0$, such that $H_0(x)=x$, $H_1(x)=p$, and the distance
\[
d(H_t(x),p)
\]
is monotone non-increasing in $t$ for every $x \in \Omega$. 
Here, $H_t(x) = H(x,t)$.

For a subset $A \subset \Omega$, we say that $\Omega$ is {\it strongly Lipschitz contractible to $A$} 
if there is a Lipschitz map $H : \Omega \times [0,1] \to \Omega$ such that 
$H_0(x)=x$ and $H_1(x) \in A$ for every $x \in \Omega$, 
the function $d(H_t(y),A)$ is monotone non-increasing in $t$ for every $y \in \Omega$ and 
$H_t(z) = z$ for all $z \in A$ and $t \in [0,1]$.
\end{definition}
Note that if $B(p,r)$ is SLC to $p$, then $B(p,r')$ is also SLC to $p$ for every $r' < r$.

In \cite{MY:SLC}, we proved that every Alexandrov space is {\it strongly locally Lipschitz contractible} in the following sense. 
\begin{theorem}[\cite{MY:SLC}] \label{thm:SLCB}
Let $M$ be an Alexandrov space. 
For every $p \in M$, there is an $r > 0$ such that $B(p,r)$ is strongly Lipschitz contractible to $p$. 
\end{theorem}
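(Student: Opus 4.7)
The strategy is to construct $H$ via the time-reparameterized gradient flow of $f := -\dist_p$, extended continuously to the point $p$ itself.

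For small $r > 0$, the function $f$ is $\lambda$-semiconcave on $B(p, r) \setminus \{p\}$ for some $\lambda = \lambda(r, \kappa)$ depending on the local lower curvature bound. By Perelman's theory of gradient flows of semiconcave functions on Alexandrov spaces (cf.\ \cite{Per DC}), the ascending gradient flow $\Phi_t$ of $f$ is continuous on $M \setminus \{p\}$, satisfies $|\Phi_t(x)\,\Phi_t(y)| \le e^{\lambda t}|xy|$, and along every trajectory $|\Phi_t(x)\,p|$ is monotone non-increasing in $t$. The first key step is to establish a uniform lower bound $|\nabla f(x)| \ge c > 0$ on $B(p, r) \setminus \{p\}$ for $r$ sufficiently small. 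By the first variation formula, $|\nabla f(x)|$ equals the supremum of $\cos\angle(\xi, \gamma^+(0))$ over unit $\xi \in \Sigma_x$ and minimizing geodesics $\gamma$ from $x$ to $p$; an angle comparison argument using $T_p M$ shows that all such $\gamma^+(0)$ cluster within a small arc of $\Sigma_x$ when $r$ is small enough. Consequently, every trajectory starting in $B(p, r)$ reaches $p$ by a hitting time $\tau(x) \le |xp|/c^2 \le r/c^2 =: T$.

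We then define $H : B(p, r) \times [0, 1] \to B(p, r)$ by
\[
H(x, t) := \Phi_{\min(tT,\, \tau(x))}(x), \qquad H(p, t) := p.
\]
Clearly $H_0 = \mathrm{id}$, $H_1 \equiv p$, and $t \mapsto |H(x, t)\,p|$ is monotone non-increasing by construction. Lipschitzness in $t$ follows from $|\nabla f| \le 1$, which bounds the flow speed. Lipschitzness in $x$ on intervals where both trajectories are still in transit follows from the $e^{\lambda T}$-Lipschitz bound on $\Phi_t$; when one trajectory has already reached $p$ while the other has not, we use monotonicity of $|\Phi_\cdot(y)\,p|$ together with the triangle inequality through $\Phi_{\tau(x)}(x) = p$ to estimate $|\Phi_{tT}(y)\,p| \le e^{\lambda \tau(x)}|xy|$. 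Continuity and Lipschitzness of $H$ at pairs $(p, t)$ follow from the crude bound $|H(x, t)\,p| \le |xp|$.

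The main obstacle is the uniform lower bound $|\nabla f| \ge c > 0$ on the punctured ball, which quantitatively expresses that $p$ is the only critical point of $\dist_p$ in a small neighborhood. Its proof requires Toponogov-type comparison applied to the tangent cone $T_p M$ to control how minimizing geodesics from $x$ to $p$ can fan out: one must show that for $|xp|$ small, any two such geodesics make a uniformly small angle at $x$, so that the gradient of $-\dist_p$ has length uniformly close to $1$. Once this regularity and the resulting Lipschitz bounds on the reparameterized flow are in hand, $H$ satisfies all the conditions for a strong Lipschitz contraction of $B(p, r)$ to $p$.
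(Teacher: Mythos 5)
The central flaw is that you treat $f=-\dist_p$ as a semiconcave function and invoke the gradient-flow machinery of \cite{Per DC} for it. In an Alexandrov space with curvature bounded below it is $\dist_p$ itself that is semiconcave (as recalled in Section~2 of the paper); its negative in general is not. Concretely, at a point $x$ from which two or more shortest geodesics reach $p$, with directions $\xi_1,\dots,\xi_m\in\Sigma_x$, the differential is
$(-\dist_p)'_x(\eta)=\max_i\cos\angle(\eta,\xi_i)$, a \emph{convex} function on $T_xM$, which violates the defining property of a semiconcave function (whose differential must be concave on each tangent cone). Such points $x$ can occur arbitrarily close to $p$: for instance, in a flat surface with conical singularities $q_k\to p$ of cone angle $<2\pi$, there are $x_k\to p$ with two shortest geodesics to $p$ passing on either side of $q_k$. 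Thus $-\dist_p$ need not be semiconcave on any punctured ball $B(p,r)\setminus\{p\}$; there is then no Lipschitz gradient flow $\Phi_t$ of $f$, no contraction estimate $e^{\lambda t}$, and the construction of $H$ does not get off the ground. The subsidiary claim that directions of shortest geodesics from $x$ to $p$ ``cluster within a small arc of $\Sigma_x$'' for $|xp|$ small is false for the same example.

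This is precisely the difficulty the paper flags in Remark~3.6 and engineers around in Section~3: one does not flow along $-\nabla\dist_p$ (nor along $-\nabla f$ for a concave $f$), but along $\nabla\, d_{f^{-1}(\bar r)}$ for an auxiliary superlevel set $f^{-1}(\bar r)$ --- a genuine distance function, hence semiconcave --- and then glues a finite chain of such flows via the implicit-function and gluing lemmas (Lemmas~3.4 and 3.5). Theorem~\ref{thm:SLCB} itself is quoted from \cite{MY:SLC}, where an approach of that kind, not a direct $-\dist_p$ gradient flow, is used. A repaired version of your outline would replace $-\dist_p$ by $d_{S(p,R)}$ for a suitable outer sphere (or, away from $p$, by a chain of such distance functions), and would then still have to establish the uniform regularity estimate and glue near $p$ --- exactly the content the theorem packages.
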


\subsection{Semiconcave functions and their gradient flows}
Following \cite{Pet}, we recall the notion of the gradients of semiconcave functions on Alexandrov spaces and their properties. 

Let $M$ be an Alexandrov space. 
A locally Lipschitz function $f$ defined on an open subset $U$ of $M$ 
is said to be {\it semiconcave} if for any $x \in U$, there are $r > 0$ and $\lambda \in \mathbb R$ such that for any minimal geodesic $\gamma : [0,T] \to U(x,r)$ of unit speed contained in $U(x,r)$, the function $f \circ \gamma(t) - (\lambda/2)t^2$ is concave on $(0,T)$ in the usual sense.
In this case, $f$ is said to be $\lambda$-concave at $x$ and on $U(x,r)$. 
Let us set $\underline\lambda(x) = \inf\{\lambda \mid f \text{ is } \lambda \text{-concave at } x\}$. 
Then, $\underline\lambda$ is upper semicontinuous on $U$. 
Indeed, for any $\e > 0$, there is $r > 0$ such that $f$ is $(\underline\lambda(x)+\e)$-concave on $U(x,r)$. 
Then, $f$ is $(\underline\lambda(x)+\e)$-concave on $U_y(r-|xy|)$. 
Hence, we have $\varlimsup_{y \to x} \underline\lambda(y) \le \underline\lambda(x)$.
If a function $g : U \to \mathbb R$ satisfies $g(x) \ge \underline\lambda(x)$, we also say that $f$ is $g$-concave. 
We say that $f$ is {\it strictly concave} (concave, resp.) if $\underline{\lambda} < 0$ ($\le 0$, resp.) on the domain. 

The distance function from a closed set $A$ of an Alexandrov space $M$ is semiconcave on $M \setminus A$. 

Let $f$ be a semiconcave function defined on an open subset $U$ of an Alexandrov space $M$. 
For $x \in U$, we can define the {\it differential} $f' = f_x' : T_x M \to \mathbb R$ of $f$ at $x$ by 
\[
f_x'(c^+(0)) = \lim_{t \to 0} \frac{f(c(t))-f(c(0))}{t}
\]
for any curve $c : [0,a) \to U$ with $c(0) = x$ having the direction at $t = 0$. 
The map $f_x' : T_x M \to \mathbb{R}$ is a $0$-concave function. 

The {\it gradient} of $f$ at $x$ is the vector $\nabla_x f = \nabla f \in T_x M$ uniquely determined by the relations 
\[
|\nabla f|^2 = f'(\nabla f) \text{ and } \left< \nabla f, v \right> \ge f'(v)
\]
for every $v \in T_x M$. 
The {\it gradient curve} of $f$ is a curve $c : [0,a) \to M$ which has the direction at any time $t \in [0,a)$ and satisfies 
\[
c^+(t) = \nabla_{c(t)} f
\]
for every $t \in [0,a)$.

\begin{theorem}[\cite{PP}, \cite{Pet:multi}]
For any semiconcave function $f$ on an open subset $U$, and for any $x \in U$, there exists the unique maximal gradient curve starting at $x$. 
\end{theorem}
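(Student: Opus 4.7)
The plan is to prove existence of a gradient curve via an Euler--type discrete scheme and uniqueness via a Gronwall estimate on the distance between two candidate curves; maximality then follows by taking the union of all gradient curves starting at $x$, which must agree on overlaps by uniqueness.

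For existence, I would fix a small step $\tau>0$ and build a piecewise--geodesic approximant $c_\tau$ starting at $x$ by iteration: given $c_\tau(k\tau)$, choose $c_\tau((k{+}1)\tau)$ to be an approximate maximizer of the functional $y\mapsto f(y)-\frac{1}{2\tau}|c_\tau(k\tau)\,y|^2$. The first variation of the squared distance together with the defining inequality $\langle\nabla_x f,v\rangle\ge f_x'(v)$ shows that each segment leaves $c_\tau(k\tau)$ in a direction close to $\nabla f/|\nabla f|$ with length $\tau|\nabla f|+o(\tau)$. Local $\lambda$--concavity of $f$ yields monotonicity of the energy $f\circ c_\tau$ and a uniform bound on $|\nabla f|\circ c_\tau$ over an interval $[0,a]$ depending only on the distance from $x$ to $\partial U$ and the local concavity constant. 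The family $\{c_\tau\}$ is then equi--Lipschitz, so Arzel\`a--Ascoli produces a subsequential limit $c$. Upper semicontinuity of $x\mapsto|\nabla_x f|$ (a direct consequence of semiconcavity) combined with convergence of the energies lets one identify $c^+(t)=\nabla_{c(t)}f$.

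For uniqueness, let $c_1,c_2$ be two gradient curves starting at $x$ and set $\varphi(t):=|c_1(t)c_2(t)|$. On a time interval along which $f$ is $\lambda$--concave in a common neighborhood of both images, the first variation formula gives
\[
\tfrac{d^+}{dt}\varphi(t)\le -\langle\nabla_{c_1(t)}f,\uparrow_{c_1(t)}^{c_2(t)}\rangle-\langle\nabla_{c_2(t)}f,\uparrow_{c_2(t)}^{c_1(t)}\rangle.
\]
The crucial input is the gradient inequality
\[
\langle\nabla_x f,\uparrow_x^y\rangle+\langle\nabla_y f,\uparrow_y^x\rangle\ge -\lambda\,|xy|,
\]
which I would derive by applying $\lambda$--concavity of $f$ along a shortest path from $x$ to $y$ and invoking $\langle\nabla f,v\rangle\ge f'(v)$ at each endpoint. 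These combine to give $\tfrac{d^+}{dt}\varphi\le\lambda\varphi$, and since $\varphi(0)=0$, Gronwall forces $\varphi\equiv 0$.

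The hardest part will be handling the uniqueness estimate at configurations where there are several shortest paths between $c_1(t)$ and $c_2(t)$, or at points where $\nabla f$ is discontinuous: the first variation formula must be phrased as an inequality involving the infimum over starting directions of minimal geodesics, and the gradient inequality must be deduced from the variational characterization of $\nabla_x f$ as the unique maximizer of $\xi\mapsto f_x'(\xi)-\tfrac12|\xi|^2$ on $T_xM$, rather than from any naive pointwise differentiation. A secondary technicality is ensuring that the approximants $c_\tau$ do not escape $U$ before a definite time; this is controlled by the monotonicity of $f\circ c_\tau$ combined with properness of $f$ on compact sublevels lying inside $U$.
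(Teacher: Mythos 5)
The paper does not prove this theorem; it is quoted verbatim from Perelman--Petrunin and Petrunin's thesis, so there is no internal argument to compare against. Your sketch is, however, a legitimate route and close in spirit to what those references do: a discrete time-stepping scheme for existence and a contraction/Gronwall estimate for uniqueness. In fact the Gronwall half is exactly the content of the (unnumbered) contraction Lemma that the paper proves in Section 2 via the same first-variation plus $\lambda$-concavity argument you describe, so that part is on solid ground.

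Two corrections are warranted in the existence half. First, for a semiconcave function the absolute gradient $x \mapsto |\nabla_x f|$ is \emph{lower} semicontinuous, not upper semicontinuous (consider $f(x) = -|x|$ on $\mathbb R$: the gradient norm jumps down to $0$ at the origin). This is the semicontinuity the paper itself invokes in the proof of Proposition \ref{prop:regular}, and it is what you need anyway: lower semicontinuity gives a lower bound on the ascent rate of the limit curve, and the equi-Lipschitz bound gives the upper bound. Second, the passage from ``subsequential Arzel\`a--Ascoli limit of the proximal scheme'' to the pointwise identity $c^+(t) = \nabla_{c(t)} f$ is the genuinely delicate step. The minimizing-movement limit is naturally a curve satisfying an energy-dissipation inequality (a curve of maximal slope in the Ambrosio--Gigli--Savar\'e sense); upgrading this to the existence of a right tangent vector at every $t$ equal to the gradient requires an extra argument. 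The cited sources finesse this by stepping along gradient flows of auxiliary distance functions (whose flows are easier to control) rather than by a pure proximal scheme, precisely to make this identification cleaner. Your outline is correct but should flag this as the point where the real work lies, rather than the uniqueness estimate (which, once the contraction lemma is in hand, is routine). Finally, you do not need properness of $f$ for local existence in time; staying inside a fixed small ball on which $f$ is $\lambda$-concave and Lipschitz suffices, and maximality is then obtained by the union-over-overlaps argument you give.
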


Let us recall a contraction property of gradient flows. 
\begin{lemma}
Let $c_1, c_2$ be two gradient curves of a $\lambda$-concave function $f$ defined on $U$. 
Suppose that $c_1(t)$ and $c_2(t)$ can be joined by a minimal geodesic contained in $U$, for every $t$ with $t_1 \ge t \ge t_0$. 
Then, we have $|c_1(t)c_2(t)| \le e^{\lambda (t-t_0)} |c_1(t_0) c_2(t_0)|$ for $t_1 \ge t \ge t_0$.
\end{lemma}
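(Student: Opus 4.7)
The plan is to set $\rho(t) := |c_1(t)c_2(t)|$ on $[t_0,t_1]$ and to bound its upper right Dini derivative by $\lambda\rho(t)$; the conclusion $\rho(t) \le e^{\lambda(t-t_0)}\rho(t_0)$ then follows from the comparison form of Gronwall's lemma, which is applicable because gradient curves are Lipschitz and hence so is $\rho$.

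The first ingredient I would use is the first variation formula for the distance function on an Alexandrov space: for a curve $c$ having a direction at time $t$ and a fixed target $q$,
\[
\frac{d^+}{dt}|c(t)q| \le -\left< c^+(t), \uparrow_{c(t)}^{q} \right>.
\]
Applying this to each endpoint of the varying segment and using the gradient equation $c_i^+(t) = \nabla_{c_i(t)} f$, I obtain
\[
D^+\rho(t) \le -\left< \nabla_{c_1(t)} f, \uparrow_{c_1(t)}^{c_2(t)} \right> - \left< \nabla_{c_2(t)} f, \uparrow_{c_2(t)}^{c_1(t)} \right>.
\]
The defining inequality of the gradient, $\left< \nabla_x f, v \right> \ge f_x'(v)$, then yields
\[
D^+\rho(t) \le - f_{c_1(t)}'\bigl(\uparrow_{c_1(t)}^{c_2(t)}\bigr) - f_{c_2(t)}'\bigl(\uparrow_{c_2(t)}^{c_1(t)}\bigr).
\]

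The second ingredient is $\lambda$-concavity along the minimal geodesic $\gamma_t : [0,\rho(t)] \to U$ from $c_1(t)$ to $c_2(t)$, which exists in $U$ by hypothesis. Concavity of $s \mapsto f(\gamma_t(s)) - (\lambda/2)s^2$ on $[0,\rho(t)]$ gives
\[
f(c_2(t)) - f(c_1(t)) \le f_{c_1(t)}'\bigl(\uparrow_{c_1(t)}^{c_2(t)}\bigr)\rho(t) + \tfrac{\lambda}{2}\rho(t)^2,
\]
and symmetrically with the roles of $c_1(t)$ and $c_2(t)$ exchanged. Adding these two inequalities and dividing by $\rho(t)$ (at times when $\rho(t) > 0$) produces
\[
- f_{c_1(t)}'\bigl(\uparrow_{c_1(t)}^{c_2(t)}\bigr) - f_{c_2(t)}'\bigl(\uparrow_{c_2(t)}^{c_1(t)}\bigr) \le \lambda\,\rho(t),
\]
which combined with the first variation bound yields $D^+\rho(t) \le \lambda\,\rho(t)$. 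The degenerate case $\rho(t)=0$ is handled by uniqueness of gradient curves, which forces $c_1 \equiv c_2$ thereafter.

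The main subtlety, such as it is, lies in the non-smooth character of the space: neither the distance function nor the gradient curves are classically differentiable, so one cannot simply apply a chain rule. This is why I work with the upper Dini derivative of $\rho$ and invoke the Alexandrov version of the first variation formula. Once this point is granted, the argument is a clean pairing of the defining gradient inequality with $\lambda$-concavity along the connecting minimal geodesic, and the closing Gronwall step is routine for Lipschitz scalar functions.
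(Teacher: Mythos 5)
Your proof is correct and follows essentially the same approach as the paper: the two-endpoint first variation of distance, the defining gradient inequality $\langle\nabla f,v\rangle\ge f'(v)$, and $\lambda$-concavity along the connecting minimal geodesic, closed by Gronwall. The only cosmetic difference is that the paper obtains the two-endpoint first-variation bound by inserting the midpoint $y$ of the geodesic and applying the fixed-target first variation to each half, whereas you invoke the two-endpoint inequality directly (whose standard proof is that same midpoint argument).
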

\begin{proof}
We may assume that $t_0 = 0$ and set $x_1 = c_1(0)$ and $x_2 = c_2(0)$. 
Let $\gamma : [0,|x_1x_2|] \to U$ be a geodesic with $\gamma(0)=x_1$ and $\gamma(|x_1x_2|)=x_2$.
Since $f$ is $\lambda$-concave along $\gamma$, we have 
\[
\frac{f(x_i)-f(x_j)-\frac{\lambda}{2}|x_1x_2|^2}{|x_1x_2|} \le f'(\uparrow_{x_j}^{y}) \le \left< \nabla f, \uparrow_{x_j}^{y} \right>
\]
for $(i,j)=(1,2), (2,1)$, where $y$ is the midpoint in $\gamma$.

On the other hands, we have 
\begin{align*}
\left. \frac{d}{d t} \right|_{t=0+} |c_1(t)c_2(t)| 
&\le (d_y)_{x_1}'(c_1^+(0)) + (d_y)_{x_2}'(c_2^+(0)) \\
&\le - \left< \uparrow_{x_1}^y, \nabla f \right> - \left< \uparrow_{x_2}^y, \nabla f \right> \\
&\le \lambda |x_1x_2|.
\end{align*}
This immediately implies the assertion. 
\end{proof}

Let us recall the definition of polar vectors. 
\begin{definition}[\cite{Pet}] \upshape
Let $C$ be a Euclidean cone of nonnegative curvature. 
For a vector $u \in C$ and a subset $\mathcal V \subset C$, we say that $u$ is {\it polar to} $\mathcal V$ if 
\[
\left<u, w \right> + \sup_{v \in \mathcal V} \left<v, w\right> \ge 0
\]
for any $w \in C$.
\end{definition}
Note that $u$ is polar to $\mathcal V$ if and only if 
\begin{equation} \label{eq:polar}
\phi(u) + \inf_{v \in \mathcal V} \phi(v) \le 0
\end{equation}
holds for any concave function $\phi : C \to \mathbb R$.
A geometric meaning of vector being polar is explained as follows. 
For vectors $v,w \in C$, if $|v|=|w|$, then $v$ is polar to $w$ if and only if $\angle v o z + \angle w o z \le \pi$ for any $z \in C$. 
So, if the space of directions at the origin of $C$ has diameter not greater than $\pi/2$, any two vectors of the same length are polar.
If $C$ isometrically splits as $C = C' \times \mathbb R$, then $(o,t)$ is polar to $(o,-t)$, where $o \in C'$ is the origin and $t > 0$.

\begin{lemma}[\cite{Pet}] \upshape \label{lem:polar}
For a point $p$ in an Alexandrov space $M$ and a closed subset $A$ of $M$ with $p \not\in A$, the gradient $\nabla_p\, d_A$ is polar to $A_p'$. 
Here, $A_p'$ is the set of all directions of a minimal geodesic from $p$ to $A$. 
\end{lemma}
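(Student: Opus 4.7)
The plan is to derive the polarity directly from the defining subgradient inequality of the gradient, after first computing the differential of $d_A$ explicitly.

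The key step is to establish the first-variation identity
\[
(d_A)'_p(w) \;=\; -\sup_{\xi \in A_p'} \langle \xi, w \rangle \quad \text{for all } w \in T_p M.
\]
Once this is in place, the polarity follows at once: substituting the identity into the defining inequality $\langle \nabla_p d_A, w \rangle \ge (d_A)'_p(w)$ of the gradient yields $\langle \nabla_p d_A, w \rangle + \sup_{\xi \in A_p'} \langle \xi, w \rangle \ge 0$, which is exactly the polar condition.

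For the identity itself, the upper bound is immediate from the classical first-variation formula for the distance to a fixed point: for each closest point $q \in A$ of $p$ one has $(d_q)'_p(w) = -\langle w, \uparrow_p^q \rangle$, and since $d_A \le d_q$ with equality at $p$, taking the supremum of $\uparrow_p^q$ over $A_p'$ gives $(d_A)'_p(w) \le -\sup_\xi \langle \xi, w \rangle$. The lower bound requires a localization argument: denoting by $A_0 \subset A$ the compact set of closest points to $p$, for a curve $c$ with $c^+(0) = w$ the $1$-Lipschitz property of $d_A$ forces the minimizers $q_t$ of $|c(t)\,\cdot\,|$ on $A$ to satisfy $|p\,q_t| \le d_A(p) + O(t)$, so $q_t$ lies in a neighborhood of $A_0$ that shrinks as $t \to 0$. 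Applying first variation pointwise at $q \in A_0$ and passing to the infimum, together with compactness of $A_0$ and of $\Sigma_p$, yields the matching lower bound $d_A(c(t)) \ge d_A(p) - t \sup_\xi \langle \xi, w \rangle + o(t)$.

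The principal obstacle is the lower bound in the first-variation identity: the set $A_0$ of closest points may be infinite and the minimizing $q_t$ need not lie in $A_0$ itself, so one must control the asymptotic uniformly over a compact neighborhood of $A_0$ and verify that the $o(t)$ error is controlled independently of $q$. Once this bookkeeping is in place, the conclusion is a one-line rearrangement of the gradient inequality.
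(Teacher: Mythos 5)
Your proposal is correct and follows essentially the same route as the paper: both derive polarity as a one-line consequence of the first-variation identity $(d_A)'_p(w) = -\sup_{\xi\in A_p'}\langle\xi,w\rangle$ together with the defining subgradient inequality $\langle\nabla_p d_A, w\rangle \ge (d_A)'_p(w)$. The paper simply quotes the first-variation identity (as part of what is attributed to Petrunin) while you additionally sketch its derivation; that extra work, and your correct observation that the delicate direction is the lower bound requiring localization near the set of closest points, does not change the fact that the central step is identical.
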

\begin{proof}
Let us fix $w \in T_pM$. 
Let $\xi \in A_p'$ be a direction such that $(d_A)'(w) = - \max_{\eta \in A_p'} \left< \eta, w \right> = - \left< \xi, w \right>$. 
Then, we have 
\[
\left< \nabla d_A, w \right> + \left< \xi, w \right> \ge 0.
\]
This completes the proof.
\end{proof}

%
%

\section{Strongly Lipschitz contractible balls} \label{sec:SLC}

In this section, we prove 
\begin{theorem} \label{prop:SLCB}
Let $p$ be a point in an Alexandrov space and $r > 0$.
If $d_p$ is regular on $B(p,r) \setminus \{p\}$, then $B(p,r)$ is strongly Lipschitz contractible to $p$. 
\end{theorem}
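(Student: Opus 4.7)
My plan is to construct the contraction by concatenating gradient flows of distance functions from auxiliary points chosen \emph{opposite} to $p$, patched over a finite covering of an annulus around $p$, and then composing with the Lipschitz contraction of a small central ball given by Theorem \ref{thm:SLCB}. The naive flow of $-d_p$ is unavailable because $-d_p$ is semiconvex rather than semiconcave; the role of the regularity hypothesis is precisely to furnish a family of semiconcave auxiliary functions $d_{q^*}$ whose gradient flows nevertheless drive points towards $p$.

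Fix $0 < \epsilon < r$, and for each $q$ in the compact annulus $A_\epsilon := \overline{B(p,r)} \setminus B(p,\epsilon)$, use regularity of $d_p$ at $q$ to pick a point $q^* \notin \overline{B(p,r)}$ in a direction opposite to the set $\{p\}_q'$ of directions from $q$ to $p$. The polarity of $\nabla d_{q^*}$ to $\{q^*\}_q'$ together with the fact that $\{p\}_q'$ lies in an open hemisphere of $\Sigma_q$ yields $(d_p)'_q(\nabla d_{q^*}(q)) < 0$, and by continuity this persists on a neighborhood $V_q$ of $q$ with a uniform constant. Compactness of $A_\epsilon$ then provides finitely many triples $(q_i, q_i^*, V_i)$, $i = 1,\dots,N$, and a uniform $c = c(\epsilon) > 0$ so that $(d_p)'(\nabla d_{q_i^*}) \le -c$ on $V_i$. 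Define a consecutive-switching flow $\Psi$ on $A_\epsilon$ by following the gradient of $d_{q_{i_k}^*}$ while in $V_{i_k}$ and switching to some $V_{i_{k+1}}$ upon first exit; since $d_p$ decreases at rate $\ge c$ along each piece, $\Psi_t(x)$ enters $B(p,\epsilon)$ in time at most $(r-\epsilon)/c$.

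The main technical obstacle is to verify that $\Psi$ is Lipschitz in the initial point. Each individual gradient-flow segment is Lipschitz by the standard contraction lemma recalled in Section \ref{sec:prelim}, but the switching times $\tau_k(x)$ are defined only implicitly as first exit times from $V_{i_k}$. By choosing each $V_i$ as a superlevel set of a distance function to which $\nabla d_{q_i^*}$ is uniformly transversal on the relevant part of $\partial V_i$, the implicit equation defining $\tau_k(x)$ can be solved with Lipschitz dependence on $x$ via an Alexandrov-space version of the implicit function theorem; this is the heart of the argument. Once $\Psi$ is Lipschitz on $A_\epsilon \times [0,(r-\epsilon)/c]$, fix $\epsilon$ small enough that Theorem \ref{thm:SLCB} supplies a Lipschitz SLC contraction $H^0$ of $B(p,2\epsilon)$ to $p$, and assemble $H: B(p,r)\times[0,1] \to B(p,r)$ by running a reparametrization of $\Psi$ on $[0,1/2]$, freezing each trajectory the moment it first enters $B(p,\epsilon)$, and then applying $H^0$ on $[1/2,1]$. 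The freezing time $T(x)$ is itself Lipschitz because $d_p\circ\Psi_t(x)$ decreases at rate $\ge c$ in $t$, so the resulting $H$ is Lipschitz, satisfies $H_0 = \mathrm{id}$ and $H_1 \equiv p$, and $d(H_t(x),p)$ is non-increasing in $t$ by the monotonicity built into each piece.
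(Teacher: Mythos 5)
Your program is essentially the same as the paper's: build semiconcave auxiliary distance functions whose gradient is polar to the ``wrong'' directions so that $d_p$ strictly decreases along their gradient flows, patch finitely many such flows over an annulus, establish Lipschitz dependence of the switching times via an implicit-function argument, and cap off by gluing with the local contraction of Theorem \ref{thm:SLCB}.  The one step that would not go through as written is the ``consecutive-switching flow.''  First, ``switching to \emph{some} $V_{i_{k+1}}$ upon first exit from $V_{i_k}$'' is not a well-defined map: the choice of next patch can jump discontinuously between nearby initial points, so $\Psi$ is in general not even continuous in $x$, and there is no a priori bound on the number of switches a trajectory makes across a generic spatial cover.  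Second, the polarity estimate $(d_p)'(\nabla d_{q^*})<0$ needs all directions in $\{q^*\}'_q$ to make $(d_p)'>c$, which for a far point $q^*$ outside $\overline{B(p,r)}$ only follows after a semiconcavity argument quantifying $d_p(q^*)-d_p(q)$; merely picking $q^*$ in a direction where $(d_p)'_q>0$ is not sufficient when geodesics to $q^*$ are non-unique.  The paper fixes both issues by not using a spatial cover at all: Proposition \ref{prop:regular} manufactures, for each level $t_j$ in a finite partition of $[r_0,r]$, an auxiliary function $d_{f^{-1}(\bar r_j)}$ (the distance from a \emph{level set} of $f=d_p$, not from a point) with $f'(\nabla d_{f^{-1}(\bar r_j)})<-c$ on the band $f^{-1}[t_j-\delta_j,t_j+\delta_j]$; the level-set structure makes the polarity/semiconcavity step automatic (the nearest point in $f^{-1}(\bar r_j)$ does the work).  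The patches are then the level bands, which the monotonically decreasing ruler $d_p$ orders canonically, so the switching sequence is the same for every starting point and the number of switches is $N$; Lemma \ref{lem:implicit function} gives Lipschitz dependence of the first-hitting time of a $\{d_p=\mathrm{const}\}$ interface, and Lemma \ref{lem:gluing} performs the glue.  If you replace your $V_i$ by these level bands and your $d_{q_i^*}$ by $d_{f^{-1}(\bar r_i)}$, your sketch becomes the paper's proof of Theorem \ref{thm:SLC}, and the final assembly (reparametrize, freeze at time $t(x)$, append $H^0$) is exactly how the paper deduces Theorem \ref{prop:SLCB}.
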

This is a global version of Theorem \ref{thm:SLCB}. 
To prove Theorem \ref{prop:SLCB}, we prove 

\begin{theorem} \label{thm:SLC}
Let $f$ be a proper semiconcave function defined on an open set $U$ such that $f$ is regular on $f^{-1}[a,b]$ for some $a < b$.
Then, there is a Lipschitz map $H : \{f \le b \} \times [0,1] \to \{f \le b \}$ 
such that for every $x \in f^{-1}[a,b]$, $y \in \{f \le a\}$ and $t \in [0,1]$, we have 
\begin{itemize}
\item $H_0(x) = x$, $f(H_1(x))=a$; 
\item the function $f(H_t(x))$ is monotone non-increasing in $t$; 
\item $H_t(y)=y$.
\end{itemize}
\end{theorem}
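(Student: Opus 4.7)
The plan is to construct $H$ as a finite concatenation of pieces of gradient flows of distance functions $d_{q_j}$ chosen so that $f$ strictly decreases along them, with switching times supplied by a Lipschitz implicit function argument.

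Since $f$ is proper, $K := f^{-1}[a,b]$ is compact. The regularity assumption gives, at each $p \in K$, a direction $\eta \in \Sigma_p$ with $f_p'(\eta) > 0$; realizing $\eta$ as $\uparrow_p^q$ for a point $q$ along the corresponding geodesic, the polarity of $\nabla_p d_q$ with $\uparrow_p^q$, combined with concavity of $f_p'$, yields $f'_p(\nabla_p d_q) \le -f'_p(\uparrow_p^q) < 0$. A standard compactness and upper-semicontinuity argument then produces a finite open cover $V_1, \ldots, V_N$ of $K$, points $q_1, \ldots, q_N$, and a uniform constant $c > 0$ with
\[
f'(\nabla d_{q_j}) \le -c \quad \text{on } V_j.
\]
Choose a shrunken open cover $V'_j \Subset V_j$ of $K$ and Lipschitz cut-offs $\chi_j : M \to [0,1]$ equal to $1$ on $V'_j$ and vanishing off $V_j$. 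Partition $[a,b]$ into $a = b_0 < b_1 < \cdots < b_M = b$ with steps $b_k - b_{k-1}$ small enough (uniformly, by compactness) that every gradient flow $\Phi^j_t$ of $d_{q_j}$ starting in $V_j$ at level $\le b_k$ remains inside $V_j$ throughout the time $\le (b_k - b_{k-1})/c$ needed to drive $f$ down to $b_{k-1}$.

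On each slab $S_k := \{b_{k-1} \le f \le b_k\}$ form the elementary blocks
\[
G_j^k(p) := \Phi^j_{\chi_j(p)\,\tau_j^k(p)}(p),
\]
where $\tau_j^k(p) \ge 0$ is defined implicitly by
\[
f(\Phi^j_{\tau_j^k(p)}(p)) = \max(f(p) - (b_k - b_{k-1}),\, b_{k-1}),
\]
and set to $0$ if $f(p) \le b_{k-1}$. Thus $G_j^k$ is the identity off $V_j$, pushes $f$ down by the full slab width on $V'_j$ (clipped at $b_{k-1}$), and interpolates on $V_j \setminus V'_j$ via $\chi_j$. Consecutively glue them as $F_k := G_N^k \circ \cdots \circ G_1^k$, and iterate $F_k$ a fixed number $L$ of times (uniformly in $k$) so that every point of $\{f \le b_k\}$ reaches $\{f \le b_{k-1}\}$; such $L$ exists because, by compactness and the covering property of $\{V'_j\}$, any round that has not yet terminated must decrease $f$ by at least a uniform amount. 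Finally reparameterize the $M$ iterated slab-maps to play out consecutively over $t \in [0,1]$.

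The main technical point, and where the paper's announced novelty enters, is the Lipschitz regularity of the implicit time function $\tau_j^k$. Since $(p,t) \mapsto f(\Phi^j_t(p))$ has $t$-derivative $\le -c$ uniformly along the flow, and since $\Phi^j_t$ is Lipschitz in $p$ (by the contraction lemma applied to the $\lambda$-concave function $d_{q_j}$ on $V_j$), a Lipschitz implicit function argument yields that $\tau_j^k$ is Lipschitz; combined with the Lipschitz cut-off $\chi_j$, every $G_j^k$ is Lipschitz, and hence so is $H$. Monotonicity of $f \circ H_t$ is automatic since each elementary flow strictly decreases $f$, and the identity on $\{f \le a\}$ is built in because all $\tau_j^k$ vanish there. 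The main obstacle is precisely this Lipschitz-implicit-function step: gradient flows on Alexandrov spaces are only Lipschitz semigroups, not $C^1$, so the standard derivative-based implicit function theorem does not apply, and the argument must proceed directly from one-sided estimates on $f(\Phi^j_t(p))$ together with the $\lambda$-contraction of the flow.
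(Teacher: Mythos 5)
Your overall plan---pushing points down through the slab $f^{-1}[a,b]$ by concatenating pieces of gradient flows of auxiliary distance functions along which $f$ strictly decreases, with switching times controlled by a Lipschitz implicit-function argument---is in the same spirit as the paper, and you have correctly isolated the Lipschitz implicit-function step (the paper's Lemma~\ref{lem:implicit function}) as the key technical novelty. But there are substantial gaps in the construction, precisely at the points where your design deviates from the paper's.

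The central difference is your choice of auxiliary functions. You flow along $d_{q_j}$ for \emph{points} $q_j$, which only gives $f'(\nabla d_{q_j}) \le -c$ on small neighborhoods $V_j$, and this forces you into localizing with cut-offs $\chi_j$ and iterating the composite $F_k$. Two of your claims on this route are not established. First, the assertion that $\Phi^j_t$ ``starting in $V_j$ at level $\le b_k$ remains inside $V_j$'' for a slab-independent time cannot be true of \emph{all} of $V_j$: points arbitrarily close to $\partial V_j$ leave $V_j$ in arbitrarily small time; so $\tau_j^k$ is not even well defined on $V_j$ as stated (one would have to clip at the exit time, which then interferes with the implicit-function and reparametrization step). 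Second, and more seriously, the claim that ``any round that has not yet terminated must decrease $f$ by at least a uniform amount'' is not proved and is the crux. Your own argument shows $\delta(p) := f(p) - f(F_k(p)) > 0$ for $f(p) > b_{k-1}$, but this does not give a uniform lower bound on the slab; indeed $\delta(p)$ can shrink proportionally with $f(p)-b_{k-1}$ (since $\tau_j^k$ is clipped at level $b_{k-1}$), so $F_k^L$ need not reach $\{f \le b_{k-1}\}$ for any fixed $L$. There is also a small but genuine oversight in the polarity step: polarity of $\nabla_p d_q$ to $A_p'$ bounds $f'_p(\nabla_p d_q)$ by $-\inf_{\eta' \in A_p'} f'_p(\eta')$, over \emph{all} directions of minimal geodesics to $q$, not just your chosen $\eta$; you need to pick $q$ so that $(f(q)-f(p)-\tfrac{\lambda}{2}|pq|^2)/|pq|>c$, which then controls \emph{every} direction to $q$ via $\lambda$-concavity---mere positivity of $f'(\eta)$ is insufficient.

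The paper avoids all of this by a different choice: it takes the auxiliary distance function to be $d_{f^{-1}(\bar r)}$, the distance from a \emph{level set}, and shows (Proposition~\ref{prop:regular}) that for suitable $\bar r$ one has $f'(\nabla d_{f^{-1}(\bar r)}) < -c$ on an entire slab $f^{-1}[r', r'']$, not just on small balls. Because the estimate is global on the slab, a single gradient flow carries the whole slab down to its bottom level, no cut-offs and no iteration are needed, and adjacent slabs are spliced at hitting times of level sets via the gluing Lemma~\ref{lem:gluing}, whose Lipschitz property is exactly Lemma~\ref{lem:implicit function}. You should read Proposition~\ref{prop:regular} closely: the choice of $\bar r$ strictly above the slab, combined with $\lambda$-concavity and the realization of $d_{f^{-1}(\bar r)}$ by nearest points $w$ with $f'_x(\uparrow_x^w) > c$, is precisely what globalizes the pointwise estimate and removes the need for partitions of unity.
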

This theorem is proved in \S \ref{sec:proof}.
Remark that for an $f$ as in Theorem \ref{thm:SLC}, the gradient flow of $f$ increases the value of $f$. 
Since Theorem \ref{thm:SLC} gives a ``reverse flow'' of it in some sense, the existence of such a flow is non-trivial.
Such a reverse flow is important for applications.


\begin{proposition} \label{prop:regular}
Let $f$ be a semiconcave proper function defined on an open subset $U$ in an Alexandrov space $M$. 
Suppose that there is an $r \in \mathbb R$ such that 
$f$ is regular on $f^{-1}(r)$. 
Then, there exist $r',r'',\bar r \in f(U)$ with $r' < r < r'' < \bar r$ such that the distance function $d_{f^{-1}(\bar r)}$ from the level set $f^{-1}(\bar r)$ is regular on $f^{-1}[r',r''] \subset U$. 
Further, 
\[
f'(\nabla d_{f^{-1}(\bar r)}) < - c
\]
holds on $f^{-1}[r',r'']$, for some $c > 0$.
\end{proposition}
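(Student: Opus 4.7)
The plan is to choose $r' < r < r'' < \bar r$ all close to $r$ at comparable scales so that, for $x \in f^{-1}[r', r'']$, every minimal geodesic from $x$ to $A := f^{-1}(\bar r)$ must initially raise $f$ at a definite rate; the polar property of the distance gradient will then force $\nabla_x d_A$ to lower $f$ at a definite rate.

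First, using upper semicontinuity of $\underline\lambda$ and lower semicontinuity of $|\nabla f|$ on the compact level set $f^{-1}(r)$, I would select a neighborhood $W \supset f^{-1}(r)$ and uniform constants $c_0 > 0$, $\lambda_0 \in \mathbb R$, $L_f < \infty$ such that $|\nabla f| \ge c_0$, $f$ is $\lambda_0$-concave, and $f$ is $L_f$-Lipschitz throughout $W$. Properness of $f$ then allows me to pick $s > 0$ small enough that $f^{-1}[r-s,\, r+2s] \subset W$ is compact; set $r' := r-s$, $r'' := r+s$, $\bar r := r+2s$, and $A := f^{-1}(\bar r)$.

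Next, for $x \in f^{-1}[r', r'']$ I bound $d_A(x)$ from above by running the gradient flow of $f$ from $x$: it has speed $|\nabla f| \ge c_0$, raises $f$ at rate $|\nabla f|^2 \ge c_0^2$, and stays in $f^{-1}[f(x), \bar r]$, hence reaches $A$ along a curve of length at most $(\bar r - f(x))/c_0 \le 3s/c_0$. Now for any $\eta \in A_x'$ realized by a minimal geodesic $\gamma$ of length $L = d_A(x)$ from $x$ to $A$, the $\lambda_0$-concavity of $t \mapsto f(\gamma(t)) - (\lambda_0/2)t^2$ yields the Jensen-type bound
\[
f'_x(\eta) \ge \frac{\bar r - f(x)}{L} - \frac{\lambda_0 L}{2} \ge \frac{s\,c_0}{3s} - \frac{|\lambda_0|}{2}\cdot\frac{3s}{c_0} = \frac{c_0}{3} - O(s),
\]
which is bounded below by some $c_1 > 0$ once $s$ is small relative to $c_0/|\lambda_0|$.

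Finally, the polar lemma gives that $\nabla_x d_A$ is polar to $A_x'$ in $T_x M$, and since $f'_x : T_x M \to \mathbb R$ is concave, the characterization (\ref{eq:polar}) with $\phi = f'_x$ yields
\[
f'_x(\nabla_x d_A) \le -\inf_{\eta \in A_x'} f'_x(\eta) \le -c_1,
\]
which is the second assertion with $c := c_1$. Regularity of $d_A$ on $f^{-1}[r', r'']$ follows because $|f'_x(v)| \le L_f|v|$ forces $|\nabla_x d_A| \ge c_1/L_f > 0$. The main obstacle is balancing the two terms in the estimate for $f'_x(\eta)$: the principal ratio $(\bar r - f(x))/L$ is only useful once one knows $L = d_A(x)$ is neither too small relative to $\bar r - f(x)$ nor so large that the $\lambda_0 L$ correction dominates. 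This forces $\bar r - r''$, $r'' - r$, and $r - r'$ to be of the same order $s$, with $s$ small relative to $c_0/|\lambda_0|$; extracting the uniform constants $c_0, \lambda_0, L_f$ on a common compact set is precisely what the semicontinuity arguments combined with properness of $f$ deliver.
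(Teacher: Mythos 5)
Your argument is essentially correct and reaches the same conclusion as the paper, but it gets there by a genuinely different and arguably cleaner route. The paper bounds $d_{f^{-1}(\bar r)}(x)$ from above by a finite-cover compactness argument: for each $x$ it produces a distant point $y$ witnessing a large difference quotient, makes the reach $\bar\ell$ uniform via the lower semicontinuous function $\ell(x)$, locates an intermediate point $z$ with $f(z)=\bar r$ on the geodesic to $y$, and compares with the nearest point $w\in f^{-1}(\bar r)$. You replace all of that machinery with a single stroke: along the gradient curve of $f$ one has $\tfrac{d}{dt}f = |\nabla f|^2$ and speed $|\nabla f|$, so as soon as $|\nabla f|\ge c_0$ holds on $f^{-1}[r',\bar r]$ you get $d_{A}(x)\le (\bar r-f(x))/c_0$ immediately. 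Feeding that upper bound on $L=d_A(x)$ into the $\lambda_0$-concavity inequality along a minimal geodesic to $A$, $f'_x(\eta)\ge (\bar r-f(x))/L - \lambda_0 L/2$, then gives a positive lower bound directly (and you could even use the cleaner form $(\bar r - f(x))/L \ge c_0$ rather than passing through the weaker $c_0/3$). The polar-vector step and the Lipschitz conversion $|\nabla_x d_A|\ge c_1/L_f$ at the end are exactly the paper's final move, with the latter made explicit where the paper merely asserts regularity.

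There is one small but real gap you should close: when you apply $\lambda_0$-concavity along a minimal geodesic $\gamma$ from $x\in f^{-1}[r',r'']$ to $A=f^{-1}(\bar r)$, you need $\gamma$ to lie inside the region $W$ where $f$ is $\lambda_0$-concave. You have arranged $f^{-1}[r-s,r+2s]\subset W$, but a minimal geodesic between two points of that sublevel band need not itself stay inside it. The fix is the same device the paper uses with its $\nu$: once an $s_0$ is fixed with $K_0:=f^{-1}[r-s_0,r+2s_0]\subset W$ compact, set $\nu_0:=\mathrm{dist}(K_0,\,M\setminus W)>0$ and then further shrink $s\le s_0$ so that $3s/c_0<\nu_0$. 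Your gradient-flow bound then guarantees $d_A(x)\le 3s/c_0<\nu_0$, so every minimal geodesic from $x$ to $A$ lies in $W$, and the concavity inequality (and the $L_f$-Lipschitz bound on derivatives) is legitimately available along it. With that added sentence your proof is complete.
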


\begin{proof}
Since $f^{-1}(r)$ is compact and $f$ is regular on $f^{-1}(r)$, by the lower semicontinuity of the absolute gradient, $|\nabla f| > c$ on $f^{-1}(r)$ for some $c > 0$.
Let $\lambda$ be such that $f$ is $\lambda$-concave near $f^{-1}(r)$.
We may assume that $\lambda \ge 0$. 
Let $\nu > 0$ be taken so that 
for any $x \in f^{-1}(r)$ and $y \in U$ with $|xy| < \nu$, every minimal geodesic between them is contained in $U$. 
For instance, we set $\nu$ the half of $|f^{-1}(r), M \setminus U|$. 

First, we prove that there are $\delta > 0$ and $\bar \ell > 0$ such that 
for any $x \in f^{-1}[r-\delta, r+ \delta]$, there is $y \in U$ with $\nu > |xy| > \bar \ell$ and 
\[
\frac{f(y)-f(x)-\frac{\lambda}{2}|xy|^2}{|xy|} > c.
\]
Using it, we completes the proof of the lemma.

By the assumption, for any $x \in f^{-1}(r)$, there exists $y \in U$ with $|yx| < \nu$ 
such that 
\[
\frac{f(y)-f(x)-\frac{\lambda}{2}|yx|^2}{|yx|} > c. 
\]
Fixing $x$ and $y$, there is $\e > 0$ such that if $z \in B(x,\e)$, then $|zy|< \nu$ and 
\[
\frac{f(y)-f(z)-\frac{\lambda}{2}|yz|^2}{|yz|} > c.
\]

Since $f^{-1}(r)$ is compact, there are finitely many points $x_1, \dots, x_m \in f^{-1}(r)$, $y_1, \dots, y_m \in U$ 
and positive numbers $\e_1, \dots, \e_m$ such that $f^{-1}(r) \subset \bigcup_{1 \le i \le m} B(x_i,\e_i) \subset U$ and that if $x \in B(x_i,\e_i)$, then $|xy_i| < \nu$ and 
\[
\frac{f(y_i)-f(x)-\frac{\lambda}{2}|y_ix|^2}{|y_ix|} > c.
\]
There is $\delta_0 > 0$ such that $f^{-1}[r-\delta_0,r+\delta_0] \subset \bigcup_{1 \le i \le m} B(x_i,\e_i)$. 

Now, for each $x \in f^{-1}[r-\delta,r+\delta]$, let us define the value $\ell(x)$ as follows. 
Setting $L_x = \left\{y \in U \relmiddle| |xy| < \nu \text{ and } \frac{f(y)-f(x)-\frac{\lambda}{2}|xy|^2}{|xy|} > c \right\}$ and $\ell(x) = \sup \{|xy| \mid y \in L_x \}$. 
Obviously, the function $x \mapsto \ell(x)$ is lower semicontinuous.
We set $\ell_\delta := \min\{\ell(x) \mid x \in f^{-1}[r-\delta,r+\delta]\}$ for $0 \le \delta \le \delta_0$. 
Then, $\ell_\delta$ converges to $\ell_0$ as $\delta \to 0$. 
Since $\ell_0 > 0$, some $\delta > 0$ exists so that 
\[
r - \delta + c \ell_\delta > r + \delta.
\] 
We fix some constant $\bar r$ with $r - \delta + c \ell_\delta > \bar r > r + \delta$, and define $\bar \ell > 0$ by $\bar r = r - \delta + c \bar \ell$.

Then, for any $x \in f^{-1}[r-\delta,r+\delta]$, there is $y \in U$ with $\bar \ell < |xy| < \nu$ and 
\[
\frac{f(y)-f(x)-\frac{\lambda}{2}|yx|^2}{|yx|} > c. 
\]
Note that $f(y) > f(x) + c |xy| > \bar r > r +\delta \ge f(x)$.
So, there is a point $z$ in a geodesic between $x$ and $y$ such that $f(z) = \bar r$. 
By the $\lambda$-concavity of $f$, we obtain 
\[
\frac{f(z)-f(x)-\frac{\lambda}{2}|zx|^2}{|zx|} \ge \frac{f(y)-f(x)-\frac{\lambda}{2}|yx|^2}{|yx|} > c.
\]
Let $w \in f^{-1}(\bar r)$ be a point so that $|xw| = \min\{|x \bar w| \mid \bar w \in f^{-1}(\bar r) \}$. 
Then, we have 
\[
f_x'(\uparrow_x^w)+\frac{\lambda}{2}|wx| \ge \frac{f(w)-f(x)}{|wx|} \ge \frac{f(z)-f(x)}{|zx|} > c+\frac{\lambda}{2}|zx|.
\]
Hence, $f_x'(\uparrow_x^w) > c$. 
{By Lemma \ref{lem:polar},} $\nabla_x d_{f^{-1}(\bar r)}$ is polar to $f^{-1}(\bar r)_x' \subset \Sigma_x$. 
{Hence, by} \eqref{eq:polar}, we obtain
\[
f_x'(\nabla d_{f^{-1}(\bar r)}) 
< - c.
\]
This completes the proof.
\end{proof}

Proposition \ref{prop:regular} enables us to check that the gradient flow of the distance function from $f^{-1}(\bar r)$ makes a Lipschitz flow whose flow curves decrease the value of $f$. 
When the curves arrive at the level set $f^{-1}(r')$, we use Proposition \ref{prop:regular} again and obtain the gradient flow of the distance function from some level set $f^{-1}(r'+\epsilon)$ for some $\epsilon > 0$. 
Then, we connect two flows on $f^{-1}(r')$ and that check that the obtained flow is also Lipschitz, in the next two subsections.

\subsection{Lipschitz regularity of an implicit function}
Let $f$ be a proper semiconcave function defined on an open set $U$ which is regular on $U$. 
Let $\Phi$ denote the maximal gradient flow of $f$. 
For $x \in U$, the maximal time defining the flow $\Phi(x,\,\cdot)$ on $U$ is denoted by $T_x$.
We assume that there are a proper semiconcave function $g$ defined on $U$, real numbers $a < b$ and $c > 0$ such that $g(U) \supset [a,b]$ and that 
\begin{equation} \label{eq:decreasing}
g_x'(\nabla_x f) < -c
\end{equation}
for every $x$ in a neighborhood of $g^{-1}[a,b]$. 
Further, we assume that for some $\bar a < a$ and $b < \bar b$, we may assume that $g_x'(\nabla_x f) < -c$ on $g^{-1}[\bar a, \bar b]$. 
In particular, $g(\Phi(x,t))$ is strictly decreasing in $t$ whenever $\Phi(x,t) \in g^{-1}[\bar a, \bar b]$. 
For any $x \in g^{-1}[a,b]$, we define the first hitting time to $\{g \le a\}$ of $x$ by 
\[
t(x) := \min \{t \in [0,T_x) \mid \Phi(x,t) \in \{g \le a\} \}. 
\]
The condition \eqref{eq:decreasing} implies that the set of all $t$'s with $\Phi(x,t) \in \{g\le a\}$ has the form $[t(x), T_x)$. 
Further, $g(\Phi(x,t))=a$ if and only if $t = t(x)$. 
Then, $x \mapsto t(x)$ can be checked to be continuous. 
We also easily check that some $T$ exists so that $t(x) \le T$ for all $x \in g^{-1}[a,b]$.
For instance, we set $T = (b-a)/c$. 

If $\epsilon > 0$ is taken to be so small, then we have that for any $x,y \in g^{-1}[a,b]$ with $|xy| < \e$, every minimal geodesic segment between $x$ and $y$ is contained in $g^{-1}[\bar a, \bar b]$. 
Indeed, we take $\epsilon$ as a positive number smaller than $\min\{a-\bar a, \bar b - b\} / 2 \Lip(g)$.

\begin{lemma}[Implicit function lemma] \label{lem:implicit function}
Let $f,g,U,a,b,\bar a,\bar b, \e$ be as above. 
Then, the function $g^{-1}[a, b] \ni x \mapsto t(x) \in [0,T]$ is Lipschitz continuous. 
Further, if $x,y \in g^{-1}[a, b]$ with $|xy| < \e$, then we have 
\[
|t(x) - t(y)| \le L(f,g,c,a,b,\e)|xy|.
\]
for some constant $L(f,g,c,a,b,\e)$ depending on $f,g,c,a,b,\e$.
\end{lemma}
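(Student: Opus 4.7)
The plan is to combine the contraction estimate for gradient flows recalled above with the quantitative descent rate $g'_x(\nabla_x f) < -c$, trading spatial separation for temporal separation. First, since $g \circ \Phi(x,\cdot)$ strictly decreases at rate at least $c$ throughout $g^{-1}[\bar a, \bar b]$, the hitting time is uniformly bounded by $T := (b-a)/c$, and both flow curves $\Phi(x,\cdot), \Phi(y,\cdot)$ remain in the compact set $K := g^{-1}[\bar a, \bar b]$ on $[0,t(y)]$ (this is precisely what the definition of $\e$ buys). On a small open thickening of $K$, $f$ admits a uniform semiconcavity constant $\lambda \in \mathbb R$, since $K$ is compact and $f$ is locally semiconcave.

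Now let $x,y \in g^{-1}[a,b]$ with $|xy| < \e$, assume without loss of generality $t(x) \le t(y)$, and write $s = t(x)$, so that $g(\Phi(x,s)) = a$. I would shrink $\e$ at the outset (keeping it in the allowed range) so that the contraction lemma applies on $[0,s]$: for each $t \in [0,s]$, the points $\Phi(x,t)$ and $\Phi(y,t)$ can be joined by a minimizing geodesic contained in the thickening of $K$. This is a short bootstrap, since the contraction lemma itself a priori controls growth by $e^{\lambda T}|xy|$, so sufficiently small initial separation is preserved. The lemma then gives
\[
|\Phi(x,s)\Phi(y,s)| \le e^{\lambda s}|xy| \le e^{\lambda T}|xy|,
\]
and the Lipschitz regularity of $g$ on $K$ yields
\[
g(\Phi(y,s)) - a \;=\; g(\Phi(y,s)) - g(\Phi(x,s)) \;\le\; \Lip(g)\, e^{\lambda T}\,|xy|.
\]
Since $g$ decreases at rate at least $c$ along $\Phi(y,\cdot)$ on $[s, t(y)]$,
\[
t(y) - t(x) \;\le\; \frac{g(\Phi(y,s)) - a}{c} \;\le\; \frac{\Lip(g)\,e^{\lambda T}}{c}\,|xy|,
\]
giving the stated local Lipschitz estimate with $L = \Lip(g)\, e^{\lambda T}/c$. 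Global Lipschitz continuity of $t(\cdot)$ on the compact set $g^{-1}[a,b]$ then follows by a standard chain argument together with the uniform bound $t \le T$.

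The main technical obstacle is the bootstrap step: because the semiconcavity constant $\lambda$ of $f$ on $K$ need not be negative, the two flow curves may drift apart during the flow, and one must ensure a priori that the curves together with the connecting minimal geodesics remain in the region of uniform $\lambda$-concavity. This is resolved by the observation that the worst-case expansion factor $e^{\lambda T}$ depends only on $f, g, c, a, b, \bar a, \bar b$, so $\e$ can be chosen sufficiently small at the outset (shrinking the value provided in the hypothesis if necessary) to guarantee that the whole configuration stays inside the open thickening of $K$ on which $f$ is uniformly $\lambda$-concave. With this choice in hand, all hypotheses of the contraction lemma are simultaneously valid throughout $[0,s]$, and the estimates above go through without further difficulty.
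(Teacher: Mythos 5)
Your proposal is correct, and it is a genuine variant of the paper's argument rather than a reproduction of it. Both proofs rest on exactly two inputs: the exponential Lipschitz bound $\Lip(\Phi(\cdot,t))\le e^{\lambda t}$ coming from $\lambda$-concavity of $f$, and the uniform descent rate $g'(\nabla f)<-c$. The paper realizes the identity $0 = g(\Phi(y,t(y))) - g(\Phi(x,t(x)))$ as an integral over a \emph{diagonal} path $s\mapsto(\gamma(s),\sigma(s))$ with $\sigma(s)=t(x)+s(t(y)-t(x))$, differentiates $g\circ\Phi$ for a.e.\ $s$, and separates the spatial and temporal contributions to get $c\,(t(y)-t(x)) < \Lip(g)\,e^{\lambda T}|xy|$. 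You instead decompose the same difference along an \emph{L-shaped} path: first flow both points to time $s=t(x)$, invoke the contraction lemma as a black box to obtain $|\Phi(x,s)\Phi(y,s)|\le e^{\lambda T}|xy|$, and then feed the residual value $g(\Phi(y,s))-a$ into the descent rate over $[s,t(y)]$. Your route avoids discussing differentiability of $g\circ\Phi$ along the diagonal, at the cost of having to raise and resolve the bootstrap issue --- that the connecting geodesics between $\Phi(x,t)$ and $\Phi(y,t)$ must remain in the region where the chosen $\lambda$ is valid, which you address by shrinking $\epsilon$ so that $\epsilon\,e^{\lambda T}$ is smaller than the available slack, plus the usual open--closed continuation argument. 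The paper leaves this containment issue implicit; your version is, if anything, slightly more careful. The resulting constant $L=\Lip(g)\,e^{\lambda T}/c$ agrees with the paper's.
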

\begin{proof}
For $x \in g^{-1}[a,b]$, if $\Phi(x,t) \in g^{-1}[\bar a, \bar b]$, then we have 
\[
\left(\frac{d}{d t} \right)_{\hspace{-4pt}+} g(\Phi(x,t)) = g_{\Phi(x,t)}'(\nabla f) < - c.
\]
Let $\lambda$ be a constant so that $f$ is $\lambda$-concave on $g^{-1}[a,b]$. 
Then, $\Lip(\Phi(\cdot,t)) \le e^{\lambda t}$ on $U$. 
If a geodesic segment $\gamma$ of constant speed is contained in $g^{-1}[\bar a, \bar b]$, then the function $g(\Phi(\gamma(s),t))$ is Lipschitz in $s$, so it has the derivative for almost all $s$ with 
\begin{align*}
\left|\frac{d}{d s} g(\Phi(\gamma(s),t)) \right| \le \Lip(g) e^{\lambda t} |\dot \gamma(s)|.
\end{align*}
Let us take points $x,y \in g^{-1}[a,b]$ with $|xy|< \epsilon$ and a geodesic segment $\gamma : [0,1] \to g^{-1}[\bar a, \bar b]$ of constant speed $|\dot\gamma(s)| \equiv |xy|$ with $\gamma(0) = x$ and $\gamma(1) = y$. 
We assume that $t(y) > t(x)$. 
Then, setting $\sigma(s) = t(x) + s(t(y)-t(x))$, we have 
\begin{align*}
0 &= g(\Phi(y,t(y))) - g(\Phi(x,t(x))) \\
&= \int_0^1 \frac{d}{d s} g(\Phi(\gamma(s),\sigma(s))) \, ds \\
&= \int_0^1 \left. \frac{d}{d s_1} \right|_{s_1=s} \hspace{-10pt} g(\Phi(\gamma(s_1),\sigma(s))) + \left. \frac{d}{d s_2} \right|_{s_2=s} \hspace{-10pt} g(\Phi(\gamma(s),\sigma(s_2))) \, ds
\end{align*}
Now, we have 
\begin{align*}
\left. \frac{d}{d s_2} \right|_{s_2=s} \hspace{-10pt} g(\Phi(\gamma(s),\sigma(s_2))) 
&= g'(\nabla f) \dot \sigma(s) 
= g'(\nabla f) (t(y)-t(x)) \\
&< - c(t(y)-t(x))
\end{align*}
Hence, we obtain 
\begin{align*}
c (t(y)-t(x)) &< \int_0^1 \left. \frac{d}{d s_1} \right|_{s_1=s} \hspace{-10pt} g(\Phi(\gamma(s_1),\sigma(s))) \, ds \\
&\le \int_0^1 \left| \left. \frac{d}{d s_1} \right|_{s_1=s} \hspace{-10pt} g(\Phi(\gamma(s_1),\sigma(s))) \right| \, ds \\
&\le \Lip(g) e^{\lambda T} |\dot \gamma(s)| = \Lip(g) e^{\lambda T} |xy|
\end{align*}
This provides the second assertion in the conclusion. 
Since $g^{-1}[a,b]$ is compact, $t(\cdot)$ is Lipschitz on $g^{-1}[a,b]$.
This completes the proof. 
\end{proof}

\subsection{Gluing two gradient flows}

Let $U$ be a bounded open subset of an Alexandrov space, and $g,h$ semiconcave {functions} defined on $U$. 
Let $f$ be a semiconcave function defined on an open set $V$ with $V \subset U$. 
Suppose that there exist $a,b,c,d$ with $a < b < c < d$ so that $\{h \le c\} \subset V$, $\{h \le d\} \subset U$ and that 
\begin{align*}
h'(\nabla f) &< -A \text{ on a neighborhood of } h^{-1}[a,c], \\
h'(\nabla g) &< -A \text{ on a neighborhood of } h^{-1}[b,d]
\end{align*}
for some constant $A > 0$. 
Let $\Phi$ and $\Psi$ denote the gradient flows of $f$ and $g$, respectively. 

\begin{lemma}[Gluing lemma]\label{lem:gluing}
Let $U,V,f,g,h,a,b,c,d,\Phi,\Psi$ be as above. 
Then, there exists a locally Lipschitz map $H : \{h \le d\} \times [0,\infty) \to \{h \le d\}$ such that 
\begin{itemize}
\item $H(x,0)=x$ for $x \in \{h \le d\}$; 
\item $H(x,t)= \Psi(x,t)$ for $(x,t) \in h^{-1}[c,d] \times [0,\e]$; 
\item $H(x,t)=\Phi(x, t - T)$ for $x \in \{h \le b\}$ and $t \in [T,T_x)$, 
\end{itemize}
for some $T > 0$ and $\e > 0$.
Further, the function $h(H(x,t))$ is monotone non-increasing in $t$ for every $x \in \{h \le d\}$. 
\end{lemma}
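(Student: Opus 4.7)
\medskip\noindent\textbf{Proof plan.} My plan is to build $H$ by concatenating three pieces in $t$: first run the gradient flow $\Psi$ of $g$ to push each point down until it reaches the level set $\{h=b\}$; then hold it stationary until a fixed time $T$; then switch to the gradient flow $\Phi$ of $f$, which continues to lower $h$. The transition between the two flows will be governed by a first hitting time function, and its Lipschitz regularity, supplied by Lemma~\ref{lem:implicit function}, is where the main work lies.

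First I would observe that the hypothesis $h'(\nabla g)<-A$ on a neighborhood of $h^{-1}[b,d]$ forces $h\circ\Psi(x,\cdot)$ to decrease at rate at least $A$ while it takes values in $[b,d]$; in particular every trajectory starting in $\{h\le d\}$ reaches $\{h\le b\}$ within time $(d-b)/A$. I would fix $T$ slightly larger than this bound and define the first hitting time
\[
\tau(x):=\inf\{t\ge 0 : h(\Psi(x,t))\le b\},
\]
with $\tau(x)=0$ whenever $h(x)\le b$, and then set $y(x):=\Psi(x,\tau(x))$. Applying Lemma~\ref{lem:implicit function} to the pair $(g,h)$, in the roles played there by $f$ and $g$ respectively, with thresholds $b,d$ and auxiliary parameters $\bar a<b$, $\bar b>d$ chosen inside the neighborhood on which $h'(\nabla g)<-A$, yields that $\tau$ is Lipschitz on $\{h\le d\}$; consequently so is $y$.

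Next I would put
\[
H(x,t)=\begin{cases}\Psi(x,t) & 0\le t\le\tau(x),\\ y(x) & \tau(x)\le t\le T,\\ \Phi(y(x),t-T) & T\le t<T+T_{y(x)}.\end{cases}
\]
Checking the three bullets is then routine: the initial condition is immediate; for $h(x)\in[c,d]$, Lipschitz continuity of $h\circ\Psi(x,\cdot)$ with some constant $K$ depending on $\Lip h$ and a uniform bound for $|\nabla g|$ on $\{h\le d\}$ gives $h(\Psi(x,t))\ge c-Kt\ge b$ for $t\le\e:=(c-b)/K$, so $\tau(x)\ge\e$ and $H=\Psi$ on $h^{-1}[c,d]\times[0,\e]$; and for $h(x)\le b$ we have $\tau(x)=0$, $y(x)=x$, which yields $H(x,t)=\Phi(x,t-T)$ on $[T,T_x)$.

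The hard part will be the local Lipschitz continuity of $H$ across the gluing surfaces $\{t=\tau(x)\}$ and $\{t=T\}$, where two a priori different formulas must match with controlled Lipschitz constants. This is precisely where Lemma~\ref{lem:implicit function} earns its keep: the Lipschitz (not merely continuous) dependence of $\tau$ on $x$ propagates, via joint Lipschitz continuity of $\Psi$ and $\Phi$ on bounded time intervals, to each piece and hence to the concatenation. Monotonicity of $h\circ H$ in $t$ is automatic on $[0,\tau(x)]$ (by $h'(\nabla g)<-A$) and on $[\tau(x),T]$ (constant); on $[T,T+T_{y(x)})$ it follows because the right-derivative of $h\circ\Phi$ equals $h'(\nabla f)<-A$ on the neighborhood of $h^{-1}[a,c]$, which forces the value of $h\circ\Phi$ to be strictly decreasing at any level in $[a,c]$ and in particular prevents it from ever rising back through $a$ once it has dropped, giving the desired non-increasing behavior throughout.
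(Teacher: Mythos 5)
Your construction is identical to the paper's: run $\Psi$ until the first hitting time $\tau(x)$ of $\{h\le b\}$ (whose Lipschitz regularity comes from Lemma~\ref{lem:implicit function} applied with $g$ and $h$ in the roles of that lemma's $f$ and $g$), hold until a uniform time $T\approx(d-b)/A$, then switch to $\Phi$; the paper writes the same five-case formula with $t(\cdot)$ in place of your $\tau(\cdot)$ and chooses $\e=\min\{t(y):y\in h^{-1}[c,d]\}$, a slightly sharper but equivalent choice to your $(c-b)/K$. The proposal is correct and takes the same route, down to the observation that the Lipschitz dependence of the hitting time is what makes the glued map locally Lipschitz across the matching surfaces.
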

\begin{proof}
For any $y \in h^{-1}[b,d]$, we set $t(y) = \min\{t \in [0,\infty) \mid \Psi(y,t) \in \{h \le b\}\}$. 
By Lemma \ref{lem:implicit function}, the function $h^{-1}[b,d] \ni y \mapsto t(y)$ is Lipschitz. 
Let $T = \max\{t(y) \mid y \in h^{-1}[b,d]\} \le (d-b)/A$. 
Let us define the map $H : \{h \le d\} \times [0,\infty) \to \{h \le d\}$ by 
\[
H(x,t) = \left\{
\begin{aligned}
&\Psi(x, t) &&\text{if } x \in h^{-1}[b,d], t \le t(x) \\
&\Psi(x,t(x)) &&\text{if } x \in h^{-1}[b,d], t \in [t(x),T] \\
&\Phi(\Psi(x,t(x)),t-T) &&\text{if } x \in h^{-1}[b,d], t \ge T \\
&x &&\text{if } x \in \{h \le b\}, t \in [0,T] \\
&\Phi(x,t-T) &&\text{if } x \in \{h \le b\}, t \ge T. 
\end{aligned}
\right.
\]
Let us set $\e = \min \{t(y) \mid y \in h^{-1}[c,d]\} > 0$. 
The map $H$ satisfies that $H(y,t) = \Psi(y,t)$ for $y \in h^{-1}[c,d]$ and $t \le \e$. 
\end{proof}

\subsection{Proof of Theorems \ref{prop:SLCB} and \ref{thm:SLC}} \label{sec:proof}
Let us first prove Theorem \ref{thm:SLC}.

\begin{proof}[Proof of Theorem \ref{thm:SLC}]
Let $f : U \to \mathbb R$ be a proper semiconcave function which is regular on $f^{-1}[a,b]$ for some $a < b$. 
Let $c > 0$ be a number satisfying $|\nabla f| > c$ on $f^{-1}[a,b]$.
By Proposition \ref{prop:regular}, there are $a = t_0 < t_1 < \dots < t_N = b$ with finite sequences of positive numbers $\{\tau_i\}_{i=1}^N$ and $\{\delta_i\}_{i=1}^N$ such that $\delta_i < \tau_i$ and 
\[
f'(\nabla_x d_{f^{-1}(t_i+\tau_i)}) < - c
\]
on $f^{-1}[t_i-\delta_i, t_i+ \delta_i]$, and that $\bigcup_{i=1}^N (t_i-\delta_i,t_i+\delta_i) \supset [a,b]$. 
Using Lemma \ref{lem:gluing} repeatedly, we obtain a Lipschitz map 
\[
H : \{f \le b\} \times [0,T] \to \{f \le b\}
\]
such that $H_0(x)=x$, $H_T(x) \in \{f = a\}$ and $f(H_t(x))$ is monotone non-increasing in $t$ for every $x \in \{f \le b\}$.
Further, $H(x, t)$ coincides with the gradient flow $\Phi(x, t-A)$ of $d_{f^{-1}(t_0+\tau_0)}$ with some parameter translation $A$, if $t$ is close to $T$ and $f(x)$ is close to $a$.
For $x \in \{f \le b\}$, we set $t(x) := \min \{t \ge 0 \mid H(x,t) \in \{f \le a\}\}$. 
Then, by Lemma \ref{lem:implicit function}, the map $t(\cdot)$ is Lipschitz.
Let us define $G : \{f \le b\} \times [0,T] \to \{f \le b\}$ by 
\[
G(x,t) := \left\{ 
\begin{aligned}
& H(x,t) &&\text{ if } t \le t(x) \\
& H(x,t(x)) &&\text{ if } t \ge t(x).
\end{aligned}
\right.
\]
Then, $G$ is Lipschitz such that $G(x,0) = x$ and $G(x,T) \in \{f = a\}$ for all $x \in \{f \le b\}$ and $G(y,t) = y$ for all $y \in \{f \le a\}$ and $t \in [0,T]$. 
Further, $f(G(x,t))$ is monotone non-increasing in $t$, for every $x \in f^{-1}[a,b]$. 
This completes the proof. 
\end{proof}

\begin{remark} \label{rem:SLC} \upshape
Remark that if a semiconcave function $f$ is globally defined on a compact Alexandrov space $X$ and has the following gradient estimate 
\[
(d_p)'(\nabla_x f) < - c
\]
for every $x \in X \setminus \{p\}$, for some $p \in M$ and a uniform constant $c > 0$, then the gradient flow $\Phi$ of $f$ can reach $p$ in a uniform finite time. 
Hence, up to time scaling, $\Phi$ gives a strong Lipschitz contraction from $X$ to $p$.

If no such a gradient estimate of a semiconcave function exists, then its gradient flow may not give a strong Lipschitz contraction. 
Indeed, there is a strictly concave function such that the gradient flow does not reach its unique critical point in any finite time. 

Let us consider a strictly concave function $f(x) = -x^2 /2$ on $[-1,1]$. 
The unique critical point is the zero $0$. 
For $x \in (0,1]$, the gradient of $f$ at $x$ is determined as follows. 
\[
|\nabla f|_x = x \text{ and } \nabla f(x) = x \uparrow_x^0.
\]
Let us consider the gradient curve $\alpha : [0,\infty) \to [-1,1]$ of $f$ with $\alpha(0) = 1$. 
In this case, it is determined by the following single differential equation. 
\[
|\alpha^+(t)| = |\nabla f|_{\alpha(t)}.
\]
Since 
\[
|\alpha^+(t)| = \lim_{\delta \to 0+} \frac{|\alpha(t+\delta)-\alpha(t)|}{\delta} = \lim_{\delta \to 0+} \frac{\alpha(t)-\alpha(t+\delta)}{\delta}, 
\]
we have 
\[
\alpha(t) = e^{-t}.
\]
Therefore, the curve $\alpha(t)$ does not reach $0$ in finite time. 
\end{remark}

\begin{proof}[Proof of Theorem \ref{prop:SLCB}]
Let $p$ be a point in an Alexandrov space $M$ and $r > 0$ such that $d_p$ is regular on $B(p,r) \setminus \{p\}$. 
By Theorem \ref{thm:SLCB}, there is $r_0 > 0$ and a strong Lipschitz contraction $F$ from $B(p,r_0)$ to $p$. 
So, we may assume that $r_0 < r$. 
Then, by the assumption, $d_p$ is regular on $d_p^{-1}[r_0,r]$. 
From Theorem \ref{thm:SLC}, there is a Lipschitz map 
\[
H : B(p,r) \times [0,1] \to B(p,r)
\]
such that $H(x,0)=x$, $H(x,1) \in S(p,r_0)$ and $d_p(H(x,t))$ is monotone non-increasing in $t$ for every $x \in B(p,r_0)$ and that $H(y,t) = y$ for every $(y,t) \in B(p,r_0) \times [0,1]$. 
Gluing two homotopies $F$ and $H$ in a natural way, we obtain a strong Lipschitz contraction from $B(p,r)$ to $p$. 
This completes the proof. 
\end{proof}

\section{Existence of good covering and homotopy types} \label{sec:good-cov}

Let $M$ be an Alexandrov space. 
A subset of $M$ is called a {\it domain} if it is a connected open subset. 
A domain $U$ of $M$ is {\it conical} if there are $x \in U$ and a topological space $A$ such that $(U,x)$ is homeomorphic to the open cone $(K(A), o)$ as a pointed space.
Then, $U$ is called a conical neighborhood of $x$ and $A$ is called a generator of $U$. 
From the uniqueness of conical neighborhoods (\cite{Kw}) and Perelman's stability theorem
 (\cite{Per Alex II}, \cite{Kap stab}),
$(U,x)$ must be homeomorphic to $(K(\Sigma_x),o)$. 
So, the generator $A$ is compact and has the same (co)homology groups as
those of $\Sigma_x$. 
However note that $A$ is not homotopic to $\Sigma_x$, in general.
For instance, if $M$ is the cone over the suspension of a homology
sphere $X$, 
then the apex $o$ of $M$ has a conical neighborhood with generator
homeomorphic to a sphere, 
however $\Sigma_o = \Sigma(X)$ is not homeomorphic to a sphere.

A subset $V$ of $M$ is called {\it convex} if every minimal geodesic
segment joining any two points of $V$ is contained in $V$. 
Here it should be noted that the uniqueness of geodesics does not hold in
general as shown by the double of a flat disk.

To prove Theorem \ref{thm:good-cover}, we need 

\begin{theorem} [cf. \cite{Per}, \cite{Kap}, \cite{MY:stab}]\label{thm:concave function}
For any $p$ in an Alexandrov space $M$, there exist an open neighborhood $\Omega$ of $p$ 
and a strictly concave function $f$ defined on $\Omega$ such that
\begin{enumerate}
\item $f(p)=\max_{\Omega} f;$ 
\item $\{ f > c \}$ is convex and conical SLC to $p$ for any $c$ with $\inf_{\Omega} f < c\le \max_{\Omega} f$.
\end{enumerate}
\end{theorem}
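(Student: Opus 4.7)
The strategy is to construct, on a suitable neighborhood $\Omega$ of $p$, a proper strictly concave function $f$ whose unique maximum is attained at $p$, and then apply Theorem \ref{thm:CSLC} to obtain all the convex, conical, and SLC properties of its superlevel sets simultaneously.

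For the construction I would follow the Perelman--Kapovitch averaging method. Choose a $\delta$-dense finite collection $\xi_1,\ldots,\xi_N\in\Sigma_p$ of unit directions and take $q_i$ to be points at a small fixed distance $r_0$ from $p$ along short geodesics in direction $\xi_i$. Set
\[
f(x) \;=\; \frac{1}{N}\sum_{i=1}^N \phi(|q_i x|),
\]
where $\phi$ is a smooth strictly concave function on $(0,\infty)$ with $\phi'(r_0)=0$ (e.g.\ $\phi(t)=-(t-r_0)^2$). Toponogov comparison together with $\delta$-denseness of the $\xi_i$ yields strict concavity of $f$ on a small neighborhood $\Omega_0$ of $p$. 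The condition $\phi'(r_0)=0$ combined with approximate equidistribution of the $\xi_i$ makes $p$ a critical point of $f$, and hence, by strict concavity, the unique maximum of $f$ in $\Omega_0$. Taking $\Omega=\Omega_0\cap\{f>c_0\}$ for $c_0$ slightly greater than $\inf_{\Omega_0}f$ produces a neighborhood on which $f$ is proper and satisfies $\max_{\Omega}f=f(p)$.

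In the boundary case $p\in\partial M$, Theorem \ref{thm:CSLC}(2) requires additionally that the canonical extension $\tilde f$ to the double $D(\Omega)$ is strictly concave. This is the standard Kapovitch construction: by choosing the $q_i$ in $\partial M$ (or in reflected pairs across $\partial M$), each $d_{q_i}$ lifts canonically to $D(M)$, and provided the lifts of the $\xi_i$ form a dense net in the doubled space of directions $\Sigma_{\tilde p}$ at $p$ in $D(M)$, the averaged extension $\tilde f$ is strictly concave at $\tilde p$, hence on a neighborhood. Shrinking $\Omega$ if necessary, strict concavity of $\tilde f$ then holds on $D(\Omega)$.

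With such an $f$ in hand, Theorem \ref{thm:CSLC}(1) provides, for every $a\in(\inf_{\Omega} f,\max_{\Omega} f)$, a convex SLC structure on $\{f\ge a\}$ contracting to a common interior point $p^\ast\in\Omega$; as $a\uparrow\max_{\Omega} f$ the set $\{f\ge a\}$ shrinks to the unique maximum $\{p\}$, which forces $p^\ast=p$. Theorem \ref{thm:CSLC}(2) then yields the conical property automatically in the interior case and via strict concavity of $\tilde f$ in the boundary case. Passing from $\{f\ge a\}$ to the open superlevel set $\{f>c\}=\bigcup_{a>c}\{f\ge a\}$ is routine: convexity passes to nested unions of convex sets, and the SLC contractions on the closed pieces can be spliced into a single SLC contraction of $\{f>c\}$ to $p$; conicality is transferred from the closed to the open superlevel sets by the corresponding pointed homeomorphisms. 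The main technical obstacle is the boundary case: a single choice of the $q_i$ must simultaneously ensure strict concavity of $f$ on $\Omega$, strict concavity of $\tilde f$ on $D(\Omega)$, and the criticality of $p$, so the density requirements for the $\xi_i$ have to be imposed in $\Sigma_{\tilde p}$ rather than merely in $\Sigma_p$.
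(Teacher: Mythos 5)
Your overall strategy coincides with the paper's: build a proper strictly concave function near $p$ by averaging compositions of distance functions with a concave real function, then feed it to Theorem~\ref{thm:CSLC} (equivalently to the fibration Theorem~\ref{thm:fibration}) to get the convex/SLC/conical structure of the superlevel sets, and handle $p\in\partial M$ by making the construction reflection-equivariant so that the extension to the double of $\Omega$ stays strictly concave. That part matches the paper.

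The genuine gap is in your concave function. You propose the \emph{plain average} $f=\frac1N\sum_i\phi(d_{q_i})$ with $\phi(t)=-(t-r_0)^2$, and assert that $\delta$-denseness of $\{\xi_i\}\subset\Sigma_p$ plus Toponogov gives strict concavity near $p$. Neither half of this is safe. First, the paper's $\chi$ is concave and \emph{increasing}, which ensures that each $\chi\circ d_{q}$ remains semiconcave: for $\frac{d^2}{dt^2}\chi(d_q(\gamma(t)))=\chi''\,(d_q)'^2+\chi'\,(d_q)''$ you need $\chi'\ge 0$ to inherit the one-sided bound $(d_q)''\le\lambda$; your $\phi$ has $\phi'<0$ on $(r_0,\infty)$, so on that range the second term is controlled from the \emph{wrong} side and $\phi\circ d_{q_i}$ need not be semiconcave at all. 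Second, even granting semiconcavity, $\delta$-denseness of the $\xi_i$ in $\Sigma_p$ does not control the distribution of $\{\uparrow_x^{q_i}\}$ inside $\Sigma_x$ for nearby $x\ne p$: in an Alexandrov space $\Sigma_x$ can be very different from $\Sigma_p$ (e.g.\ if $T_pM$ is a thin cone, a nearby manifold point $x$ sees all the $q_i$ inside a narrow angular cone in the full $\Sigma_x$), so the transverse directional second derivative of a plain average is not obviously negative. This is precisely why the paper, following Perelman and Kapovitch, does \emph{not} use a plain average: it takes an $\ell$-net $\{x_\alpha\}\subset S(p,2r)$, a finer $\nu$-net $\{x_{\alpha\beta}\}$ near each $x_\alpha$ with $\nu\ll\ell$, and sets $f=\min_\alpha\frac{1}{\#\beta}\sum_\beta\chi(d_{x_{\alpha\beta}})$ with $\chi$ concave increasing. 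The two-scale ``min of averages'' is what makes strict concavity and regularity provable on $U(p,r)\setminus\{p\}$ (see \cite{Per}, \cite{Kap}, \cite{MY:stab}); your proposal skips this and replaces it with an unargued claim.

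A minor point: criticality of $p$ in your setup follows immediately from $\phi'(r_0)=0$ (each summand has zero differential at $p$), with no need for ``approximate equidistribution''; and convexity of any superlevel set $\{f>c\}$ is immediate from concavity of $f$, so you do not need to route that through Theorem~\ref{thm:CSLC}. The rest — the passage from closed to open superlevel sets and the boundary-equivariant choice of centers — is fine in outline and matches the paper.
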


First it should be noted that the strictly concave function $f$ in Theorem \ref{thm:concave function} 
was constructed in \cite{Per}, \cite{Kap} as the minimum of the average of the composition of 
distance functions and a strictly concave $C^2$-function, 
by using some net in a metric sphere around $p$. 
More explicitly this is done as follows : Let $r>0$ be small enough.
Fixing some maximal $\ell$-discrete set $\{x_\alpha \}_{\alpha}$ of $S(p, 2r)$, a maximal $\nu$-discrete set $\{x_{\alpha\beta}\}_{\beta}$ of 
$S(p, 2r) \cap B(x_{\alpha}, 2 \ell r)$ with $\nu \ll \ell$ and a concave increasing function 
$\chi : (0, 3r) \to \mathbb R$ 
which is strictly concave near $r$, we set $f_{\alpha} = \frac{1}{\# \{\beta\}} \sum_{\beta} \chi(d(x_{\alpha\beta}, \cdot))$ and 
$f = \min_\alpha f_{\alpha}$. 
Then $f$ is strictly concave and regular on $U(p,r)$ except $p$,
{where $U(p,r)$ denotes the open $r$-ball around $p$}.
 It is checked that the set $\{ f >c\}$ is 
SLC to $p$ for some $c$ with $c< \max _{U(p,r)} f$ (see \cite{MY:stab}).
We only have {to} show that $\{ f>c\}$ is conical. If $p\notin \partial M$, Theorem \ref{thm:fibration} implies 
the conclusion. If $p\in\partial M$, we take the metric ball $\tilde B(p,3r)$ in the double $D(M)$,
and take an $\ell$-discrete set $\{ \tilde x_\alpha \}_{\alpha}$ of $\tilde S(p, 2r) :=\partial \tilde B(p, 2r)$, 
a maximal $\nu$-discrete set $\{\tilde x_{\alpha\beta}\}_{\beta}$ of 
$\tilde S(p, 2r) \cap \tilde B(x_{\alpha}, 2 \ell r)$ in such a way that those are invariant under the action 
of reflection with respect to $\partial M$.
Then the function $\tilde f:\tilde U(p,r)\to \mathbb R$ defined by the distance functions from those points
in a similar way to the above is strictly concave and regular except $p$.
Thus by Theorem \ref{thm:fibration} $f$ is a fiber bundle when restricted to $\{ {\max} f> f> c\}$, and hence
$\{ f>c\}$ must be conical. 

\begin{proof}[Proof of Theorem \ref{thm:CSLC}]
Take $a$ with $\inf_U f < a <\max_U f$, and let $\Omega:=\{ f >a\}$.
Since $f$ is strictly concave, a maximizer of it is unique, say $p \in \Omega$.
Then, for any $x \in \Omega \setminus \{p \}$, we have from the concavity of $f$
\begin{equation} \label{eq:grad}
f'(\uparrow_x^p) \ge \frac{f(p)-f(x)}{|px|} > 0. 
\end{equation}
Therefore $f$ is regular on $\Omega \setminus \{p \}$. 
Let us take $c > 0$ such that $f$ is $(-c)$-concave on $\Omega$. 
For $x \neq p$, the $(-c)$-concavity implies 
\[
f(x) \le f(p) + f'(\uparrow_p^x) |px| - (c/2) |px|^2 \le f(p) - (c/2) |px|^2.
\]
Therefore, \eqref{eq:grad} is improved by 
\begin{equation} \label{eq:grad2}
f'(\uparrow_x^p) \ge \frac{f(p)-f(x)}{|px|} \ge (c/2)|px|.
\end{equation}
Hence, $f'(\uparrow_x^p)$ has a uniform lower bound on $\{|px| \ge r\}$ depending on $r$, for every fixed $r > 0$. 
By the first variation formula, we have 
\[
(d_p)'(\nabla_x f) \le - \left<\uparrow_x^p, \nabla_x f \right> \le - f'(\uparrow_x^p).
\]
This together with \eqref{eq:grad2} implies 
\[
(d_p)'(\nabla_x f) \le - (c/2)r.
\]
for every $x \in \{d_p \ge r\}$.
Take $r>0$ with $B(x,r)\subset\Omega$. From Theorem \ref{thm:SLC}, there is a Lipschitz homotopy
\[
F : \Omega \times [0,1] \to \Omega
\]
such that $F(x,0)=x$, $|p, F(x,1)|=r$ and $|p, F(x,t)|$ is monotone non-increasing in $t$ for every $x \in \{d_p \ge r\}$ and that $F(y,t)=y$ for every $(y,t) \in B(p,r) \times [0,1]$. 

On the other hands, Theorem \ref{thm:SLCB} gives a strong Lipschitz contraction $G$ from $B(p,r)$ to $p$, if $r$ is small. 
Gluing two Lipschitz homotopies $F$ and $G$ in a natural way, we have a Lipschitz homotopy 
\[
H : \Omega \times [0,1] \to \Omega
\]
such that $H(x,0)=x$, $H(x,1)=p$ and $|p, H(x,t)|$ is monotone nonincreasing in $t$, for every $x \in \Omega$. 
This completes the proof of $(1)$. 

For the proof of $(2)$, we only have to use Theorem \ref{thm:fibration}. 
\end{proof}

\begin{lemma} \label{lem:SLC}
Let $U_1,\ldots,U_m$ be convex, conical SLC domains in $M$ 
defined as superlevel sets $U_i=\{f_i>c_i\}$ via strictly concave functions $f_i$ as in 
Theorem \ref{thm:concave function} defined on domains $\Omega_i$. 
If $U_1\cap \cdots \cap U_m$ is nonempty, it is a convex, conical SLC-domain.
\end{lemma}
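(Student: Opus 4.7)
My plan is to package the functions $f_i$ into a single strictly concave function on the intersection and then apply Theorem \ref{thm:CSLC}. First, convexity and openness of $U := \bigcap_{i=1}^m U_i$ are inherited from the $U_i$; a nonempty convex subset of an Alexandrov space is path-connected by minimal geodesics, so $U$ is a domain as required.

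The key step is to set $f := \min_{1 \le i \le m}(f_i - c_i)$ on the connected component of $\bigcap_i \Omega_i$ containing $U$. As a minimum of finitely many proper strictly concave functions, $f$ is itself proper and strictly concave, and $U = \{f > 0\}$ by construction. Applying Theorem \ref{thm:CSLC}(1) to $f$ yields a unique maximizer $p \in U$ such that $\{f \ge a\}$ is convex and SLC to $p$ for every $a$ strictly between $\inf f$ and $\max f$. Since $f$ is regular on $U \setminus \{p\}$, the construction in the proof of Theorem \ref{thm:CSLC}(1) adapts verbatim to the open superlevel set $\{f > 0\} = U$ (alternatively, one glues the SLC homotopies on $\{f \ge \e\}$ as $\e \downarrow 0$), so $U$ is convex and SLC to $p$.

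For the conical property I invoke Theorem \ref{thm:CSLC}(2). If $U$ does not meet $\partial M$, the conclusion is immediate. Otherwise, every $U_i$ contains this common boundary point, so every $\Omega_i$ meets $\partial M$, and the reflection-invariant net construction recalled after Theorem \ref{thm:concave function} provides strictly concave extensions $\tilde f_i$ of $f_i$ to the double $D(\Omega_i)$. Then $\tilde f := \min_i(\tilde f_i - c_i)$ is the canonical extension of $f$ to $D(U)$ and remains strictly concave, so Theorem \ref{thm:CSLC}(2) gives that $U$ is conical. The main obstacle I foresee is precisely this last verification: one must ensure that whenever $\Omega_i$ meets $\partial M$ the underlying $f_i$ was built reflection-symmetrically, and then check that taking the pointwise minimum genuinely preserves strict concavity on the double. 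Both points are essentially bookkeeping tied to the explicit construction behind Theorem \ref{thm:concave function}, but they are the place where the argument could subtly fail if any of the $f_i$ were not set up correctly from the outset.
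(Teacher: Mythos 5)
Your proposal takes essentially the same route as the paper: both reduce to the single strictly concave function $\min_i(f_i-c_i)$ on $\bigcap_i\Omega_i$ (the paper normalizes $c_i=0$, you subtract $c_i$, which is the same thing) and then apply Theorem~\ref{thm:CSLC}, treating the case $\Omega\cap\partial M\neq\emptyset$ by reconstructing each $f_i$ as the restriction of a $\mathbb Z_2$-equivariant strictly concave function on the double exactly as sketched after Theorem~\ref{thm:concave function}. Your side remarks about passing from $\{f\ge a\}$ to the open superlevel set and about restricting to the connected component of $\bigcap_i\Omega_i$ address small points that the paper glosses over but that are resolved by inspecting the proof of Theorem~\ref{thm:CSLC} (whose argument is in fact carried out on $\{f>a\}$).
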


\begin{proof}
We may assume $U_i=\{ f_i > 0\}$, where $f_i$ is $(-c)$-concave for some $c>0$.
Set $\Omega:= \bigcap_{i=1}^m \Omega_{i}$
Then 
\[
U_{1} \cap \cdots \cap U_{m} = \{x \in \Omega \mid \min_{1 \le i \le m} f_{i}(x) > 0 \}.
\]
Since $\min_{1 \le i \le m} f_{i}$ is $(-c)$-concave on $\Omega$, the conclusion follows from Theorem \ref{thm:CSLC} if $\Omega$ does not meet $\partial M$.
In case $\Omega$ meets $\partial M$, we first construct $\Bbb Z_2$-equivariant $(-c)$-concave function 
 $\tilde f_i$ on the double $D(U_i)$ in a way similar to the construction right after Theorem \ref{thm:concave function}.
Therefore $\tilde f_i$ descends to a $(-c)$-concave function $f_i$ on $U_i$, and 
 again we can apply Theorem \ref{thm:CSLC} to get that
the set $\bigcap_{1 \le i \le m} U_{i}$ is a convex, conical SLC domain.
\end{proof}

\begin{proof}[Proof of Theorem \ref{thm:good-cover}(1)]
Let $\mathcal V$ be an open covering of $M$. 
For any $x \in M$, we fix $V_x \in \mathcal V$ with $x \in V_x$. 
By Theorem \ref{thm:concave function}, there is a strictly concave
function 
$f_x$ defined on some neighborhood $\Omega_x$ of $x$ with $\Omega_x \subset V_x$. 
Adding a constant to $f_x$, we may assume $U_x = \{ y\in \Omega_x \mid f_x(y) > 0\}$. 
By Lemma \ref{lem:SLC}, $U_x$ is a conical, convex SLC neighborhood of $x$.
Since $M$ is proper, it is covered by a countable union of compact
subsets.
Therefore we can choose a countable set $\{ x_i\}\subset M$ such that
$\{ U_{x_i}\}$ is a locally finite covering of $M$. 
If the intersection $U_{x_1} \cap \dots \cap U_{x_m}$ is nonempty, we
can set 
\[
U_{x_1} \cap \dots \cap U_{x_m} = \{x \in \bigcap_{i=1}^m \Omega_{x_i} \mid \min_{1 \le i \le m} f_{x_i}(x) > 0 \},
\]
it must be a convex, conical SLC domain by Lemma \ref{lem:SLC}.
This completes the proof.
\end{proof}

\begin{proof}[Proof of Theorem \ref{thm:good-cover}(2)]
We consider only the case that $M$ is noncompact.
Let $\mathcal U = \{ U_i \}_{i=1}^{\infty}$ be a locally finite good covering of $M$.
Recall $U_i$ is defined as the super level set 
\[
U_i = \{ \varphi_i > 0 \},
\]
of a strictly concave function $\varphi_i$. Define $\psi_i = \varphi_i -c_i$ 
on $U_i$, and $\psi_i= 0$ outside $U_i$, 
and set 
\[
f_i = \frac{\psi_i}{\sum_{j=1}^{\infty} \psi_j}.
\]

Note that $f_i$ is a Lipschitz function on $M$ satisfying 
\begin{enumerate}
\item $0\le f_i\le 1;$
\item $f_i > 0$ on $U_i$ and $\supp(f_i) = \bar U_i;$
\item $\sum_{i=1}^{\infty} f_i = 1$.
\end{enumerate}
Let $K$ be the nerve of the covering $\{ U_i\}_{i=1}^{\infty}$ where
the vertices of $K$ are the canonical basis $\{ e_i\}_{i=1}^{\infty}$ of
${\mathbb R}^{\infty}$. Define $F:M \to |K|$ by 
\[
F(x)= (f_1(x),\ldots, f_N(x), \ldots).
\]
Let $K'$ be the barycentric subdivision of $K$. 
Recall that 
$U_{i_0\cdots i_m} := U_{i_0}\cap \cdots\cap U_{i_m} = \{ \varphi_{i_0\cdots i_m} > 0 \}$, 
where $\varphi_{i_0\cdots i_m} := \min_{j=0}^m \varphi_{i_j}$, and that
$U_{i_0}\cap \cdots\cap U_{i_m}$ is SLC to 
the unique maimum point, denoted by $p_{i_0\cdots i_m}$, of $\psi_{i_0\cdots i_m}$
via the gradient curves of $\psi_{i_0\cdots i_m}$. 
We now define $G:|K'| \to M$ as follows.
For any $\sigma=[e_{i_0},\ldots, e_{i_k}]\in K$, let $b(\sigma)$ be the
barycenter of $\sigma$. We put $G(b(\sigma)) := p_{i_0\cdots i_k}$. 
Assume that $G$ is defined on the $(m-1)$-skelton $|(K')^{m-1}|$ of 
$K'$ in such a way that if all the vertices of an $(m-1)$-simplex $\tau$ of
$K'$ is mapped via $G$ to $U_{k_0\cdots k_{\ell}}$, $G(\tau)$ is also contained 
in $U_{k_0\cdots k_{\ell}}$. 
Now take any $m$-simplex $s=[b_{0}\cdots b_{m}]$ of $K'$. 
Let $U_{j_0},\ldots, U_{j_{\ell}}$ be the set containing all of $G(b_0),\ldots, G(b_m)$.
By the inductive assumption $G(\partial s)$ is also contained in 
$U_{j_0\cdots j_{\ell}}$.
Lemma\ref{lem:SLC} enables us to extend $G:\partial s \to U_{j_0\cdots j_{\ell}}$
to a Lipschitz map $G:s \to U_{j_0\cdots j_{\ell}}$ by deforming $G(\partial s)$ to $p_{j_0\cdots j_{\ell}}$.
Repeating this procedure, we have a Lipschitz map $G:|K'| \to M$.

\begin{assertion}\label{ass:FG}
$F\circ G$ is Lipschitz homotopic to the identity $1_K$. 
\end{assertion}

\begin{proof}
For any $x=(x_1,\ldots, x_N, \ldots)\in |K'|$, let $s$ and $\sigma$ be the open simplexes 
of $K'$ and $K$ respectively containing $x$.
Let $\sigma = |e_{i_0}\cdots e_{i_{\ell}}|$.
Take $0\le j\le \ell$ with $G(x)\in U_{i_j}$.
It follows that $f_{i_j}(G(x))>0$.
Note $x_{i_j}>0$. Set $h_i(x)= \min\{ x_i,f_i(G(x))\}$, $1\le i < \infty$.
Note $\sum_{i=1}^{\infty}\, h_i(x) >0$ and define
$H:|K'| \to |K|$ by 
\[
H(x) = 
\left( 
\frac{h_i(x)}{\sum_{i=1}^{\infty}\, h_i(x)} 
\right) 
\]
Since $x$ and $H(x)$ as well as $H(x)$ and $F\circ G(x)$ are in the 
same simplex, $1|_{|K|}$ is Lipschitz homotopic to $H$,
and $H$ is Lipschitz homotopic to $F\circ G$.
\end{proof}

\begin{assertion}
$G$ is homotopy equivalent with homotopy inverse $F$. 
\end{assertion}

\begin{proof}
From Assertion \ref{ass:FG}, $G$ induces injective homomorphisms $\pi_{*}(|K|) \to \pi_{*}(M)$ in all
dimensions. 
Note that $M$ has the homotopy type of a $CW$-complex $L$ of finite dimension since $M$ is locally contractible 
and finite dimensional.
For each $m\ge 1$ and each map $\alpha:S^m\to M$,
set $\beta:=G\circ F\circ\alpha:S^m\to M$.
From construction, for each $x\in S^m$, if $F(\alpha(x))$ is contained in an open simplex
$\sigma=(e_{i_0},\ldots,e_{i_k})$, then both $\alpha(x)$ and $\beta(x)$ are contained in 
some $U_{i_j}$ with $0\le j\le k$.
Take a sufficiently fine triangulation $\Sigma$ of $S^m$. If $\alpha(v)$ and $\beta(v)$ is in $U_i$ for a 
vertex $v\in \Sigma^{0}$, we can join $\alpha(v)$ to $\beta(v)$ by a homotopy in $U_i$.
Since $\Sigma$ is sufficiently fine, this homotopy can be extended inductively to 
a homotopy between $\alpha$ and $\beta$ on each skeleton $\Sigma^{\ell}$
with $0\le \ell \le m$.
Namely $\alpha$ is homotopic to $\beta$, and thus $G$ induces isomorphisms 
$\pi_{m}(|K|) \to \pi_{m}(M)$ for all $m$.
Therefore Whitehead's theorem implies that $g$ is homotopy equivalent.
\end{proof}
\end{proof}

\begin{remark} \upshape
In the situation of Theorem \ref{thm:good-cover}, $M$ actually has the same {\it Lipschitz} homotopy type as the 
nerve of any good covering of it. The proof will appear in a forthcoming paper. 
\end{remark}

\section{Stability of good coverings} \label{sec:good-stab}

Let us recall that $\mathcal A(n,D,v_0)$ {denotes} the set of all isometry classes of 
$n$-dimensional compact Alexandrov spaces $M$ with 
curvature $\ge -1$, $\diam(M)\le D$, $\vol(M)\ge v_0>0$.
In this section, we prove Theorem \ref{thm:good-stab}.

{The construction of locally defined strictly concave functions is stable in the non-collapsing convergence, which is stated as follows:
\begin{lemma}[\cite{Kap}, \cite{MY:stab}] \label{lem:stable SLC} 
Let $M \in \mathcal A(n,D,v_0)$ and $M_j \in \mathcal A(n,D,v_0)$ a sequence converging to $M$ as $j \to \infty$. 
Then, for any $p \in M$, there exist $r>0$ satisfying the following: 
there exist a Lipschitz function $\varphi$ (resp. $\varphi_j$) on $M$ (resp. on $M_j$) which is strictly concave on $U(p, r)$ (resp. on $U(p_j,r)$), 
and $p_j \in M_j$ such that 
\begin{itemize}
\item $\varphi_j$ and $p_j$ converge to $\varphi$ and $p$ respectively, under the convergence $M_j \to M$; 
\item $p$ (resp. $p_j$) is the unique maximizer of $\varphi$ (resp. of $\varphi_j$) on $U(p,r)$ (resp. on $U(p_j,r)$).
\end{itemize}
\end{lemma}
}


\begin{proof}[Proof of Theorem \ref{thm:good-stab}]
Let us fix $M \in \mathcal A(n,D,v_0)$. 
We construct a finite good cover 
{$\{U_i\}_{i=1}^N$} of $M$ as in the proof of Theorem \ref{thm:good-cover}(2). 
Let us recall that each $U_i$ has the form $U_i = \{x \in U(p_i,r_i) \mid \varphi_i > 0\}$ for some $p_i \in M$, $r_i > 0$ and a 
{
strictly concave function $\varphi_i$ on $U(p_i,r_i)$ such that $p_i$ is the unique maximizer of $\varphi_i$.}

For $T \subset \{1, \dots, N\}$, we set $U_T = \bigcap_{i \in T} U_i$ and if $U_T$ is nonempty, then we set $m_T := \max_{U_T} \varphi_T$, where $\varphi_T = \min_{i \in T} \varphi_i$ which is strictly concave on $U_T$.
{Furthermore}, we set $m := \min \{m_T \mid U_T \neq \emptyset \}$. 
{We set 
\[
V_i = \{x \in U(p_i,r_i) \mid \varphi_i(x) > \delta_0 m\}. 
\]
Here, $\delta_0 > 0$ is a small number such that 
the family $\{V_i\}_{i=1}^N$ still covers $M$.}
Let $\mathcal U_M = \{V_i \}_{i=1}^N$. 
By {Lemma \ref{lem:SLC},}
$\mathcal U_M$ is a good covering of $M$.

By Lemma \ref{lem:stable SLC}, 
{there is an $\epsilon_0(M) > 0$ depending on $M$}
such that for any $M' \in \mathcal A(n,D,v_0)$ with $d_{\mathrm{GH}}(M,M') < {\epsilon_0(M)}$, there exist $p_i' \in M'$ and 
{a strictly concave function $\varphi_i'$ on $U(p_i',r_i)$ such that $p_i'$	 is the unique maximizer of $\varphi_i'$.}
Here, each $p_i'$ is close to $p_i$ via a Gromov-Hausdorff approximation between $M$ and $M'$. 
{Let} $\psi : M \to M'$ and $\psi' : M' \to M$ {be} $\epsilon$-Gromov-Hausdorff approximations such that 
{
\[
\sup_{x \in M} |\psi' \circ \psi(x), x| < \epsilon, \hspace{1em} \sup_{y \in M'} |\psi \circ \psi'(y), y| < \epsilon
\]
where $\epsilon = 2 d_{\mathrm{GH}}(M,M')$.}
{Furthermore, by Lemma \ref{lem:stable SLC}, we have 
\[
|\varphi_i(x) - \varphi_i'(\psi(x))| < \epsilon_1, \hspace{1em} |\varphi_i(\psi'(y)) - \varphi_i'(y)| < \epsilon_1
\]
for all $x \in M$ and $y \in M'$, where $\epsilon_1 = \epsilon_1(\epsilon) > 0$ satisfies $\lim_{\epsilon \to 0} \epsilon_1(\epsilon) = 0$.} 
{Let us assume that 
\[
\epsilon_1 < \delta_0 m /2 
\]
by taking $\epsilon_0(M)$ to be small.}

{Now we consider the family $\{V_i'\}_{i=1}^N$ defined as 
\[
V_i' := \{x \in U(p_i', r_i) \mid \varphi_i'(x) > \delta_0 m/ 2\}.
\]
}
{We shall prove that $\mathcal U_{M'} = \{V_i'\}_{i=1}^N$ is a good cover of $M'$ and its nerve is isomorphic to that of $\mathcal U_M$.}
{Let us show that $\mathcal U_{M'}$ covers $M'$. 
Indeed, for any $y \in M'$, there exists $i$ such that $\psi'(y) \in V_i$, that is, $\varphi_i(\psi'(y)) > \delta_0 m$. 
Hence, we have 
\begin{align*}
\varphi_i'(y) &> \varphi_i(\psi'(y)) - \epsilon_1 > \delta_0 m - \epsilon_1 > \delta_0 m/2. 
\end{align*}
Therefore, $y \in V_i'$.
So, we have $\bigcup_{i=1}^N V_i' = M'$. 
}
Then, by Lemma \ref{lem:SLC}, the covering $\mathcal U_{M'}$ of $M'$ is good. 
{Next, we show that the nerves of $\mathcal U_{M'}$ and $\mathcal U_M$ are isomorphic.}
For any subset $T \subset \{1, \dots, N\}$, we set $V_T = \bigcap_{i \in T} V_i$ and $V_T' = \bigcap_{i \in T} V_i'$.
By the construction, for every $x \in V_i$, 
{
\[
\varphi_i'(\psi(x)) > \varphi_i(x) - \epsilon_1 > \delta_0 m - \epsilon_1 > \delta_0m /2.
\]
So,} we have $\psi(x) \in V_i'$. 
Hence, if $V_T$ is nonempty, then $V_T'$ is nonempty.
Conversely, let us take $y \in V_T'$. 
Then, 
{\[
\varphi_i(\psi'(y)) > \varphi_i'(y) - \epsilon_1 > 0 
\]
}for all $i \in T$.
Hence, $\psi'(y) \in U_T$. 
{Here, if $\varphi_T(\psi'(y)) > \delta_0 m$, $\psi'(y) \in V_T$. 
So, we may assume that $\varphi_T(\psi'(y)) \le \delta_0 m$.}
By the choice of $m$, along the gradient curve of $\varphi_T$ starting from $\psi'(y)$, we can find a point $x \in U_T$ with $\varphi_T(x) > \delta_0 m$. 
Hence, $V_T$ is nonempty. 
Therefore, the nerves of $\mathcal U_{M}$ and $\mathcal U_{M'}$ are isomorphic
{under the correspondence $V_i \mapsto V_i'$.}


{From the above argument, for every $M \in \mathcal A(n, D,v_0)$, there exists an $\epsilon_M > 0$ satisfying the following: 
there exists a finite simplicial complex $K$ such that every $M' \in U_{\text{GH}}(M,\epsilon_M)$ admits a good covering whose nerve is isomorphic to $K$. 
}
%
Here, $U_{\mathrm{GH}}(M,\epsilon)$ denotes the {open} $\epsilon$-neighborhood of $M$ in $\mathcal A(n,D,{v_0})$. 
{Since $\mathcal A(n,D,v_0)$ is compact, we can take a finitely many Alexandrov spaces $M_1, \dots, M_N \in \mathcal A(n,D,v_0)$ such that $\{U_{\mathrm{GH}}(M_i, \epsilon_{M_i})\}_{i=1}^N$ covers $\mathcal A(n,D,v_0)$.
From the definition of $\epsilon_M$, we obtain simplicial complices $K_1, \dots, K_N$ such that every $M \in U_{\mathrm{GH}}(M_i,\epsilon_{M_i})$ has a good covering whose nerve is isomorphic to $K_i$.
Let $\epsilon_0 > 0$ be the Lebesgue number of the cover $\{U_{\mathrm{GH}}(M_i, \epsilon_{M_i})\}_{i=1}^N$. 
This is the desired constant in the statement of the theorem.}
This completes the proof. 
\end{proof}

\section{Appendiex: a version of fibration theorem} \label{sec:appendix}

Fibration Theorem \ref{thm:fibration} is important to determine the topological structure of Alexandrov spaces via regular functions. 
Actually, Perelman proved it for admissible functions on spaces with or without boundary in \cite{Per Alex II} and \cite{Per}, 
and for general semiconcave functions on spaces without boundary in \cite{Per DC}.
In this section, we prove Theorem \ref{thm:fibration} and its generalization (Theorem \ref{thm:fibration with boundary} stated later), which are versions of fibration theorems for semiconcave functions on spaces with non-empty boundary. 


\subsection{MCS-spaces}
To prove Theorems \ref{thm:fibration} and \ref{thm:fibration with boundary} stated below, we recall the notion of {\it MCS-spaces} introduced by Perelman (\cite{Per Alex II}, \cite{Per}). 
Those spaces are defined inductively: 
$0$-dimensional MCS-spaces are defined to be discrete sets; 
a separable metrizable space $X$ is called an $n$-dimensional MCS-space if for any $x \in X$, there exist an open neighborhood $U$ and a compact $(n-1)$-dimensional MCS-space $Y$ such that $(U,x)$ is homeomorphic to $(K(Y), o)$ as pointed spaces, where $o$ denotes the apex of the cone. 
We call $U$ a conical neighborhood of $x$ and $Y$ a generator of $U$.
Any MCS-space has a canonical stratification into topological manifolds, as follows. 
Let $X$ be an $n$-dimensional MCS-space. 
Then, $X$ has a canonical stratification into topological manifolds $\{X^{(\ell)}\}_{\ell=0}^n$ by the following way: $p \in X$ is in the (canonical) $\ell$-stratum $X^{(\ell)}$ if $p$ has a conical neighborhood homeomorphic to $\mathbb R^\ell \times K$, where $\ell$ is taken to be maximal and $K$ is a cone over some compact MCS-space. 
Then, $X^{(\ell)}$ is a topological $\ell$-manifold. 
We call $X^{(n)}$ the {\it top stratum} of $X$. 
Note that $X^{(n)}$ is always non-empty and dense in $X$. 

Siebenmann proved the following important theorem in the category of topological spaces: 
\begin{theorem}[\cite{Sie}] \label{thm:Siebenmann}
Let $f : X \to Y$ be a proper continuous surjection between topological spaces. 
Suppose that 
\begin{itemize}
\item[(A)] for each $y \in Y$, the fiber $f^{-1}(y)$ is an MCS-space; 
\item[(B)] $f$ is a topological submersion, that is, for any $x \in X$, there exist an open neighborhood $V$ of $x$ and a homeomorphism $\varphi : (f^{-1}(f(x)) \cap V) \times f(V) \to V$ respecting $f$. 
Here, the letter means that $f \circ \varphi$ is the projection of the second coordinate of the product.
\end{itemize}
Then, $f$ is a fiber bundle, that is, for any $y \in Y$, there exist an open neighborhood $U$ of $y$ and a homeomorphism $\theta : f^{-1}(y) \times U \to f^{-1}(U)$ respecting $f$.
\end{theorem}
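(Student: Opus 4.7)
The plan is to fix $y_0 \in Y$ and construct, over a small neighborhood $W \ni y_0$, a trivialization of $f$ covering the entire fiber $F_0 := f^{-1}(y_0)$. Since $f$ is proper, $F_0$ is compact, so by hypothesis (B) I can cover it by finitely many submersion charts $\varphi_\alpha : (V_\alpha \cap F_0) \times W_\alpha \to V_\alpha$ respecting $f$, where $V_\alpha$ is open in $X$ and $W_\alpha$ is an open neighborhood of $y_0$ in $Y$. Passing to the common base $W := \bigcap_\alpha W_\alpha$ and shrinking the $V_\alpha$ accordingly, the problem reduces to patching the partial trivializations $\{\varphi_\alpha\}$ into a single global trivialization $F_0 \times W \to f^{-1}(W)$ respecting $f$.

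The patching I would perform by induction on the canonical MCS-stratification $F_0 = \bigsqcup_{\ell=0}^{n} F_0^{(\ell)}$, from lowest to highest dimension. The base case $\ell = 0$ is easy, since $F_0^{(0)}$ is a finite set of points and the corresponding submersion charts are already genuine local products $K(\text{link}) \times W$ that one may assume disjoint. For the inductive step, suppose a trivialization $\Psi$ is already defined on an open neighborhood of $\bigcup_{k<\ell} F_0^{(k)}$ in $f^{-1}(W)$. To extend $\Psi$ across a point of $F_0^{(\ell)}$, I would compare $\Psi$ on its overlap with one of the $\varphi_\alpha$ hitting that point; the composition $\Psi^{-1} \circ \varphi_\alpha$ is a $W$-parametrized family of self-homeomorphisms of an open subset of $F_0$ which reduces to the identity at $y_0$. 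A stratum-preserving isotopy extension argument applied on a collared neighborhood of $F_0^{(\ell)}$ then absorbs this family into a modification of $\varphi_\alpha$ which agrees with $\Psi$ on the overlap, so that the two glue to extend $\Psi$ across $F_0^{(\ell)}$. After finitely many strata and finitely many $\alpha$, one obtains the desired global trivialization over $W$.

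The main obstacle is exactly the topological isotopy extension used at each inductive step. In the smooth category its analogue is Thom's first isotopy lemma, and for topological manifold fibers the Edwards--Kirby theorem suffices, but here the fibers are genuinely singular: the absorbing isotopy must respect the conical structure along each stratum and must be trivial on the lower strata where the trivialization has already been fixed. Establishing such a stratum-preserving isotopy extension lemma in the topological MCS-category is the technical heart of Siebenmann's argument in \cite{Sie}; granting it, the inductive patching above closes and produces the required fiber bundle structure, proving Theorem \ref{thm:Siebenmann}.
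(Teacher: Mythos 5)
The paper does not prove Theorem \ref{thm:Siebenmann}: it is stated as a direct citation of Siebenmann \cite{Sie}, followed only by the remark that the theorem holds for any fiber class possessing the relevant ``deformation property'' for homeomorphisms, that CS and WCS sets are such classes, and that MCS-spaces interpolate between them. So there is no internal proof in the paper for you to be compared against; the paper's whole argument is that one may appeal to \cite{Sie}.

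Your blind sketch is a reasonable reconstruction of what actually happens inside Siebenmann's proof, and it is compatible with the source: cover the compact fiber $F_0 = f^{-1}(y_0)$ by finitely many product charts, pass to a common base neighborhood, and then glue charts one at a time by absorbing the transition homeomorphisms $\Psi^{-1}\circ\varphi_\alpha$ via a stratified isotopy extension. The two places where your outline departs from what Siebenmann literally does are minor and essentially a matter of where the stratum-by-stratum induction is performed. In \cite{Sie} the top-level patching is run through a ``Union Lemma'' that joins two partial trivializations over intersecting chart domains, and it is \emph{inside} the Isotopy Extension Theorem for CS/WCS sets (not at the outer gluing level) that the induction on the canonical stratification appears. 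You have instead pulled that stratum induction up to the outer loop, which is a legitimate reorganization but should not mislead the reader into thinking the $0$-stratum case is trivial; even there, one needs the cone structure to deform one chart into the other relative to the base point $y_0$, which requires the same deformation machinery.

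The one thing you should state more carefully, and which is exactly what the paper's remark after the theorem is addressing, is that your ``granting'' of a stratum-preserving isotopy extension lemma in the MCS-category is not a free postulate: one must check that MCS-spaces actually satisfy the deformation property that Siebenmann's Union Lemma and Isotopy Extension Theorem hinge on. Siebenmann verifies this for CS and WCS sets; the paper's position is that MCS-spaces sit between these two classes and hence inherit the property. If you want your sketch to be fully honest, that inclusion (or a direct verification of the deformation property for MCS-spaces) is the missing lemma, not merely a reference to \cite{Sie} in the abstract.
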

We call such a map $\varphi$ in the condition (B) a product chart of $f$ at $x$, and a $V$ a product neighborhood.
The theorem actually holds if we replace an MCS-space with a space having some property about deformations of homeomorphisms.
For instance, as in \cite{Sie}, CS sets and WCS sets have such a property. 
MCS-spaces give a middle class of between CS and WCS sets. 

\subsection{The original fibration theorem by Perelman}
Let $\Sigma$ denotes an Alexandrov space of curvature $\ge 1$. 
A semiconcave function $g : \Sigma \to \mathbb R$ is said to be spherically concave if it is $g$-concave, or equivalently, the cone extension $K(g) : K(\Sigma) \to \mathbb R$ is concave, where $K(g)(a \xi) = a g(\xi)$ for $a \ge 0$ and $\xi \in \Sigma$. 
The inner product $\left< g, h \right>$ of two spherically concave functions $g, h$ on $\Sigma$ is defined by 
\[
\left< g, h \right> := \sup_{\xi \in \Sigma} \left(g(\xi)h(\xi) + \left< g_\xi', h_\xi' \right>\right). 
\]
Here, $0$-dimensional Alexandrov spaces are the spaces of single point or consisting of two points of distance $\pi$, and hence, the inner product is actually defined inductively. 
Note that, the derivation of every semiconcave function is spherically concave on the space of directions at each point. 

Let $U$ be an open set of an Alexandrov space $M$ having no boundary points. 
A map $f = (f_1, \dots, f_k)$ consisting of semiconcave functions $f_i$ defined on $U$ is said to be {{\it regular}} at $x \in U$ if 
\begin{itemize}
\item there is $\xi \in \Sigma_x$ such that $(f_i)_x' (\xi) > 0$ for each $i$ and 
\item $\left< (f_i)_y', (f_j)_y' \right> < 0$ for $y$ near $x$ and for every $i \neq j$. 
\end{itemize}
The set of all regular points {of $f$} in $U$ is open. 
The original fibration theorem for semiconcave functions is stated as follows. 
\begin{theorem}[\cite{Per DC}] \label{thm:original fibration}
Let $f = (f_1, \dots, f_k)$ be a map consisting of semiconcave functions on $U$ as above. 
Suppose that $f$ is regular on $U$. 
Then, the following holds. 
\begin{itemize}
\item $f(U)$ is open in $\mathbb R^k$;
\item each fiber of $f$ is an $(n-k)$-dimensional MCS-space;
\item $f : U \to f(U)$ is a topological submersion;
\item further, if $f : U \to f(U)$ is proper, that is, the preimage of every compact set in $f(U)$ is compact, then it is a fiber bundle.  
\end{itemize} 
\end{theorem}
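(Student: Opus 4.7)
The plan is to prove the four conclusions in sequence — openness of $f(U)$, the product-chart (topological submersion) property, the MCS structure of fibers, and finally the bundle conclusion via Siebenmann's Theorem \ref{thm:Siebenmann}. The engine throughout is a controlled composition of gradient flows of the $f_i$ and of certain auxiliary admissible functions; the mutual obtuseness condition $\langle (f_i)', (f_j)' \rangle < 0$ furnishes the uniform quantitative transversality needed to make those compositions behave. Concretely, fix $x_0 \in U$. For each sign pattern $\sigma \in \{\pm 1\}^k$, the regularity hypothesis combined with the averaging/minimum construction recalled immediately after Theorem \ref{thm:concave function} yields a semiconcave function $g_\sigma$ on a neighborhood of $x_0$ whose gradient satisfies $(f_i)'(\nabla g_\sigma) > c\,\sigma_i$ for some uniform $c>0$. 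Composing short-time gradient flows $\Phi_{g_\sigma}$ in sequence, and solving for composition times realizing a prescribed displacement $v \in W \subset \mathbb{R}^k$ in $f$-values via Lemma \ref{lem:implicit function}, produces a locally Lipschitz map
\[
\Psi : \bigl( f^{-1}(f(x_0)) \cap V \bigr) \times W \longrightarrow V
\]
with $f\circ\Psi$ equal to the projection onto $W$. Injectivity of $\Psi$ follows from the uniform motion of the $f_i$ under the flows; that $\Psi$ is a local homeomorphism is shown by an invariance-of-domain argument on a small transverse slice, using the Alexandrov structure at $x_0$. This simultaneously gives openness of $f(U)$ and condition (B) of Siebenmann.

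For the MCS structure of fibers, I induct on $n-k$. The base case $n-k=0$ is immediate, since the product-chart construction then forces each fiber to be discrete. For the inductive step, fix $p$ in a fiber $f^{-1}(y_0)$ and extend the map to $(f_1,\dots,f_k,f_{k+1})$, where $f_{k+1}$ is either a suitable distance function $d_q$ or a strictly concave function from Theorem \ref{thm:concave function}, chosen so that the enlarged map remains regular near $p$. Applying the product-chart construction to the enlarged map identifies a neighborhood of $p$ in $f^{-1}(y_0)$ with a product of an interval and a fiber of the enlarged map, the latter being MCS by induction. At exceptional points where $f_{k+1}$ restricts to a fiberwise maximum on $f^{-1}(y_0)$, Theorem \ref{thm:CSLC} gives that $\{f_{k+1}\ge c\}\cap f^{-1}(y_0)$ is convex and conical SLC to $p$, supplying a cone neighborhood with a compact MCS generator. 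Thus every point of $f^{-1}(y_0)$ admits a conical neighborhood with a compact MCS generator, so $f^{-1}(y_0)$ is an $(n-k)$-dimensional MCS-space.

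With (A) MCS fibers and (B) the product-chart property established, properness of $f$ permits the direct invocation of Theorem \ref{thm:Siebenmann} to promote $f$ to a fiber bundle over $f(U)$. The principal obstacle lies in the construction of $\Psi$: the flows $\Phi_{g_\sigma}$ do not commute, so their sequential composition accumulates cross-terms that must be absorbed into the regularity estimates. The strict negativity of the inner products $\langle (f_i)', (f_j)' \rangle$ — as opposed to mere linear independence of the differentials — provides a uniform quantitative gain at each composition step, while Lemma \ref{lem:implicit function} ensures Lipschitz regularity of the computed composition times. A secondary technical difficulty is verifying that $\Psi$ is a local homeomorphism in the absence of any smooth inverse function theorem; the invariance-of-domain argument resolving this requires knowing that the fiber $f^{-1}(f(x_0)) \cap V$ is a topological manifold of the expected codimension, so in fact the product-chart and MCS-fiber statements must be established together by a common induction on $n-k$.
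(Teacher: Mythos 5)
You should first note that the paper does not prove this statement at all: Theorem \ref{thm:original fibration} is quoted from Perelman's preprint \cite{Per DC}, and the paper's own contribution is the boundary version, Theorem \ref{thm:fibration with boundary}, whose proof follows Perelman's scheme — a backward induction on $k$ starting from $k=n$, organized around the complementable/incomplementable dichotomy, the auxiliary function $h=g-H\circ f$ of Lemma \ref{lem:equiv regular nbd}, a fibration away from $h^{-1}(0)$ supplied by the inductive hypothesis, a cone extension over the unique fiberwise maximum, and finally Siebenmann's Theorem \ref{thm:Siebenmann}. Your proposal departs from that scheme, and the departure is where it breaks.

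The central gap is your verification that $\Psi$ is a local homeomorphism. You invoke invariance of domain and, as you yourself concede at the end, this requires the fiber $f^{-1}(f(x_0))\cap V$ to be a topological manifold of codimension $k$; but the whole point of the theorem is that regular fibers are only MCS-spaces, which are not manifolds in general (cones over non-spherical compact MCS generators genuinely occur, as the paper's own discussion after Theorem \ref{thm:concave function} of cones over suspensions of homology spheres illustrates). So the proposed ``common induction'' establishing product charts and MCS fibers simultaneously cannot close: the tool you need for the product chart presupposes a structure on the fiber that is strictly stronger than what the induction can deliver. A second unsupported step is the claim that a prescribed displacement $v\in W\subset\mathbb R^k$ of all $k$ values of $f$ can be realized exactly by composing the non-commuting flows $\Phi_{g_\sigma}$ with Lipschitz-chosen times: Lemma \ref{lem:implicit function} only controls the hitting time of a single level set along a single gradient flow, and the ``cross-terms'' you propose to absorb are precisely the obstruction — each subsequent flow perturbs the coordinates already adjusted, and no contraction or correction argument is given. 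Perelman avoids this by never solving such a $k$-dimensional system directly; the product chart at an incomplementable point is instead assembled from the $(k+1)$-variable fibration of $(f,h)$ plus the cone extension over $h^{-1}(0)$. Finally, your treatment of the ``exceptional points'' in the fiber misuses Theorem \ref{thm:CSLC}: that theorem concerns strictly concave functions on open subsets of $M$ and the conical/SLC structure of their superlevel sets in $M$; it says nothing about the set $\{f_{k+1}\ge c\}\cap f^{-1}(y_0)$ inside a fiber, which carries no Alexandrov structure to which the gradient-flow and concavity machinery could be applied. These exceptional points are exactly the incomplementable case, and handling them requires the $g$, $H$, $h$ construction of Lemma \ref{lem:equiv regular nbd} (or its boundaryless original in \cite{Per}, \cite{Per DC}) rather than an appeal to Theorem \ref{thm:CSLC}.
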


We note that the function $f(x) = -|x|^2 /2$ is $(-1)$-concave on a disk $D = \{x \in \mathbb R^2 \mid |x-1| \le 1 \}$ with the flat metric, but it is not admissible and its extension to the double is not semiconcave.
Indeed, the derivation of $f$ at $x_0=2$ a boundary point is of the form 
\[
f_{x_0}' (\xi) = 2 \cos \angle (0_{x_0}', \xi)
\]
for $\xi \in \Sigma_{x_0}(D)$, which is not contained in the class of DER functions (see \cite{Per}).
Hence, $f$ is not admissible.
Further, every point except $0$ is a regular point of $f$, but the fibration theorem does not hold for $f$, because two regular fibers $f^{-1}(f(x_0)) = \{x_0\}$ and $f^{-1}(f(1))$ are not homeomorphic. 

Perelman's Stability Theorem (\cite{Per Alex II}, \cite{Kap stab}) 
is very important in the geometry of Alexandrov spaces. 
The proof of it is based on the fibration theorem (for admissible functions). 
The fibration Theorems states that each regular fiber is a general MCS-space. 
Further, we can prove that each fiber of dimension one is actually a manifold, by using Stability Theorem as follows. 

\begin{lemma}\label{lem:codim one fiber}
Let $U$ be an open subset in an $n$-dimensional Alexandrov space having no boundary point and $f = (f_i) : U \to \mathbb R^{n-1}$ a map consisting of semiconcave functions $f_i$. 
Suppose that $f$ is regular on $U$. 
Then, the fiber of $f$ at each point in $f(U)$ is a one-manifold without boundary. 
\end{lemma}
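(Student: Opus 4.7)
The plan is to combine Theorem \ref{thm:original fibration} with a one-coordinate extension trick that reduces the problem to the already understood codimension-$0$ case.

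First I would apply Theorem \ref{thm:original fibration} directly to $f$: this yields that $f(U)$ is open in $\mathbb R^{n-1}$, that $f$ is a topological submersion, and that each fiber is a $1$-dimensional MCS-space. In particular, any point $x_0$ in a fiber has a conical neighborhood in the fiber homeomorphic to $K(Y)$ for some compact $0$-dimensional MCS-space $Y$, i.e., a finite discrete set. The lemma therefore reduces to showing that $|Y|=2$ at every $x_0$, for then the local model $K(Y)$ becomes an open interval and the fiber is a $1$-manifold without boundary.

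To prove this, I would construct on a small neighborhood $\tilde U$ of $x_0$ an additional semiconcave function $f_n$ so that the augmented map $F=(f_1,\dots,f_{n-1},f_n):\tilde U\to\mathbb R^n$ is still regular at $x_0$. By regularity of $f$ at $x_0$, there exists $\xi\in\Sigma_{x_0}$ with $(f_i)_{x_0}'(\xi)>0$ for every $i\le n-1$. Since $M$ has no boundary at $x_0$, the space of directions $\Sigma_{x_0}$ is a full $(n-1)$-dimensional Alexandrov space of curvature $\ge 1$ with no boundary; this affords enough room to pick a point $q\in M$ so that the spherically concave function $(d_q)_{x_0}'$ satisfies $\langle(f_i)_{x_0}',(d_q)_{x_0}'\rangle<0$ for every $i<n$ while $(d_q)_{x_0}'(\xi')>0$ at some direction $\xi'$ still satisfying $(f_i)_{x_0}'(\xi')>0$. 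Taking $f_n=d_q$ then supplies the desired regular extension.

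Now apply Theorem \ref{thm:original fibration} to $F$, which is regular with $n$ components on an $n$-dimensional space. Each fiber of $F$ is a $0$-dimensional MCS-space, i.e., discrete, and $F$ is a topological submersion onto an open subset of $\mathbb R^n$. At $x_0$ the fiber through $x_0$ is thus a single isolated point, so the product chart collapses to a local homeomorphism $F:W\to F(W)\subset\mathbb R^n$ on a small open $W\ni x_0$. Since $f|_W=\pi\circ F$, where $\pi:\mathbb R^n\to\mathbb R^{n-1}$ forgets the last coordinate, the fiber $f^{-1}(f(x_0))\cap W$ corresponds under $F$ to an open subset of the line $\{f(x_0)\}\times\mathbb R$ through $F(x_0)$, hence is an open interval. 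This forces $|Y|=2$ and completes the argument.

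The main obstacle is the extension step, namely the choice of $f_n$: one must produce an additional spherically concave function on $\Sigma_{x_0}$ obtuse to each $(f_i)_{x_0}'$ while keeping a common direction of positive values, and realize it as the derivative of a distance function. This is a Perelman-style construction using distance functions from carefully chosen points, and it crucially invokes the no-boundary hypothesis, since when $p\in\partial M$ the space $\Sigma_p$ has boundary and the above inner-product inequalities may fail to be achievable by any distance-function derivative.
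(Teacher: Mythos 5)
Your proposal has a genuine gap at its central step. You reduce the problem to showing $|Y|=2$ at a potential branching point $x_0$, and you propose to do this by finding an additional semiconcave function $f_n$ so that $F=(f_1,\dots,f_{n-1},f_n)$ is regular at $x_0$. But this is exactly the assertion that $f$ is \emph{complementable} at $x_0$ in the sense of Perelman's Morse theory, and complementability is false in general at such points --- indeed, the vertices of the fiber graph are precisely the points where $f$ is \emph{incomplementable}. When $f$ is complementable at $x_0$, the local product structure coming from $F$ already forces the fiber of $f$ to look locally like an interval, so $x_0$ is automatically a non-branching point; the entire content of the lemma resides in handling the incomplementable points, where your construction of $f_n$ cannot be carried out, irrespective of whether $\Sigma_{x_0}$ has boundary. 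The absence of boundary in $M$ does not by itself guarantee that a common direction $\xi'$ and the inner-product inequalities $\langle (f_i)'_{x_0},(d_q)'_{x_0}\rangle<0$ can be achieved simultaneously; this is constrained by the global geometry of the pattern $\{(f_i)'_{x_0}\}$ on $\Sigma_{x_0}$, not merely by $\partial\Sigma_{x_0}=\emptyset$.

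The paper's proof takes a different, cohomological route and so does not need complementability. It uses the topological-submersion conclusion of Theorem \ref{thm:original fibration} to see that near a vertex $x$ of the fiber one has $(V,x)\cong(K(\Lambda)\times\mathbb R^{n-1},o)$ with $\Lambda$ the link of $x$ in the graph, which gives $H^n(U,U\setminus x)\cong\bar H^0(\Lambda)\cong\mathbb Z_2^{|\Lambda|-1}$ with $\mathbb Z_2$ coefficients. On the other hand, Perelman's stability theorem gives $(V,x)\cong(K(\Sigma_x),o)$ and, since $\partial M=\emptyset$, Grove--Petersen \cite{GP1} gives $H^{n-1}(\Sigma_x;\mathbb Z_2)\cong\mathbb Z_2$; hence $H^n(U,U\setminus x)\cong\mathbb Z_2$. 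Comparing the two computations forces $|\Lambda|=2$. The no-boundary hypothesis enters precisely at the Grove--Petersen step, not through any complementability consideration. If you want to salvage your line of reasoning, you would need to first show complementability always holds in this specific setting, which amounts to re-proving the lemma.
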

\begin{proof}
We may assume that $n \ge 2$. 
The original fibration theorem states that the fiber $F= f^{-1}(v)$ at $v \in f(U)$ is a one-dimensional MCS-space, which is a locally finite graph in general. 
Let $x \in F$ be a vertex of the graph. 
Since $f$ is a topological submersion, $x$ has a conical neighborhood $V$ in $U$ such that 
$(V,x)$ is homeomorphic to $(K(\Lambda) \times \mathbb R^{n-1}, o)$, where 
$\Lambda$ is a link at $x$ in the graph $F$. 
Then, we have 
\[
H^n(U,U \setminus x) \cong \bar H^0(\Lambda). 
\]
Here, the cohomologies are considered having $\mathbb Z_2$-coefficients.
On the other hands, by Stability Theorem and by Grove and Petersen (\cite{GP1}), we have 
\[
H^n(U,U \setminus x) \cong H^{n-1}(\Sigma_x) \cong \mathbb Z_2. 
\]
Therefore, the set $\Lambda$ consists of only two points. 
Consequently, $F$ does not have a branching point, that is, $F$ is a one-manifold without boundary. 
This completes the proof.
\end{proof}

\subsection{Double}
Let $M$ denote an Alexandrov space with nonempty boundary. 
Its double $D(M)$ is defined as the quotient space of the disjoint union $M \amalg M$ by identifying boundary points of two $M$'s. 
Perelman proved that $D(M)$ is also an Alexandrov space with the canonical metric (\cite{Per Alex II}). 
Then, $M$ is regarded as an isometrically embedded subset of $D(M)$. 
For any $x \in D(M)$, we set $r(x)$ the point corresponding to $x$ in the other copy of $M$ in $D(M)$.
This $r$ defines a canonical isometric involution (reflection via the boundary) on $D(M)$, and the fixed point set is equal to $\partial M$.
For any subset $S$ of $M$, we set 
$D(S) = S \cup r(S) \subset D(M)$.
For a map $g : S \to Y$ to a space $Y$, its canonical extension to the double $D(S)$ is defined by $\tilde g(x) = \tilde g(r(x))$ for $x \in S$. 
For $x \in \partial M$, $\tilde B(x, \delta)$ denotes the closed ball in $D(M)$ centered at $x$ of radius $\delta$. 

\subsection{Equivariant incomplementable lemma}
Let $U$ be an open subset of an $n$-dimensional Alexandrov space $M$ such that $U \cap \partial M \neq \emptyset$.  
Let $f : U \to \mathbb R^k$ be a map such that the double extension $\tilde f_i$ is semiconcave for each component $f_i$, where $k<n$. 
Let $p \in U \cap \partial M$ be such that $\tilde f = (\tilde f_i)$ is regular at $p$. 
We say that $\tilde f$ is {\it $r$-complementable at} $p$ if there exist an open neighborhood $V$ of $p$ in $U$ and a Lipschitz function $f_{k+1}$ defined on $V$ such that the extension $\tilde f_{k+1}$ of $f_{k+1}$ is semiconcave on $D(V)$ and $(\tilde f, \tilde f_{k+1})$ is regular at $p$. 
Otherwise, we say that $f$ is {\it $r$-incomplementable at} $p$. 

\begin{lemma} \label{lem:equiv regular nbd}
Let $f, U$ and $M$ be as above. 
Suppose that $\tilde f$ is regular at $p$ and $f$ is $r$-incomplementable at $p$ for $p \in U \cap \partial M$. 
Then, there exist an open neighborhood $V$ at $p$ in $U$, a Lipschitz function $g$ defined on $V$ and a Lipschitz function $H : f(V) \to \mathbb R$ such that $\tilde g$ is semiconcave on $D(V)$; 
\begin{itemize}
\item[(a)] a map $\begin{pmatrix} \mathrm{id}_{\mathbb R^k} & 0 \\ - H & \mathrm{id}_{\mathbb R} \end{pmatrix} : \mathbb R^{k+1} \to \mathbb R^{k+1}$ is bi-Lipschitz on $(f,h)(V)$;  
\item[(b)] setting an $r$-equivariant function $\tilde h := \tilde g - H \circ \tilde f$, we have $\tilde h(p)=0$ and $\tilde h \le 0$ on $D(V)$; 
\item[(c)] $(\tilde f, \tilde g)$ is regular on $\tilde K_\rho \setminus \tilde h^{-1}(0)$; 
\item[(d)] $\tilde f^{-1}(v) \cap \tilde h^{-1}(0) \cap \tilde K_\rho$ is a single-point set for each $v \in \tilde f(\tilde K_\rho)$. 
\end{itemize}
Here, 
$\tilde K_\rho$ is a compact set defined as 
\begin{align*}
\tilde K_\rho &= \{x \in D(V) \mid \|\tilde f(x)-f(p)\| \le \rho, \tilde h(x) \ge - 2 \rho \}
\end{align*}
for small $\rho > 0$. 
Here, the norm on $\mathbb R^k$ is the maximum norm.
\end{lemma}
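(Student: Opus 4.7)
The plan is to follow Perelman's proof of the incomplementable function lemma (cf.\ the original proof of Theorem \ref{thm:original fibration} in \cite{Per DC}), carefully keeping track of the $\mathbb Z_2$-action generated by the reflection $r$ on $D(M)$. First I fix a small closed ball $\tilde B$ centered at $p$ in $D(M)$ on which $\tilde f$ is regular. Since $\tilde f$ is $r$-equivariant and $p$ is fixed by $r$, the positive cone $C^+_p := \{\xi \in \Sigma_p D(M) : (\tilde f_i)'_p(\xi) > 0 \text{ for every } i\}$ is open, nonempty and $r_*$-invariant. I build a candidate $r$-equivariant Lipschitz function $g$ on a neighborhood of $p$ whose double extension $\tilde g$ is semiconcave, for instance by taking $r$-symmetric point configurations $\{q_j, r(q_j)\}$ and setting $\tilde g(x) = \min_j \chi\bigl(\tilde d_{q_j}(x) + \tilde d_{r(q_j)}(x)\bigr)$ for a strictly concave increasing $\chi$, in analogy with the construction used right after Theorem \ref{thm:concave function}. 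By choosing the $q_j$ so that the associated directions at $p$ are sufficiently dense and $r_*$-equivariant, I arrange that $\tilde g'_p$ is spherically concave on $\Sigma_p D(M)$ and that $\tilde g'_p \le 0$ on $C^+_p$; this last step is forced by the $r$-incomplementability hypothesis, since otherwise $(\tilde f, \tilde g)$ would already be regular at $p$.

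Next I set
\[
H(v) := \max\bigl\{\tilde g(x) : x \in \tilde f^{-1}(v) \cap \tilde B\bigr\},
\]
which is well-defined and Lipschitz on $\tilde f(\tilde B)$: the regularity of $\tilde f$ on $\tilde B$ makes it a topological submersion onto an open set via Theorem \ref{thm:original fibration}, and the fiberwise supremum of a semiconcave function depends Lipschitz-continuously on $v$ by a gradient-flow estimate along the fiber. Property (a) is then immediate, since $(v,y)\mapsto(v, y-H(v))$ is bi-Lipschitz. Define $\tilde h := \tilde g - H\circ\tilde f$. By definition of $H$ we have $\tilde h \le 0$ on $\tilde B$, and $\tilde h(p)=0$ because $p$ is a fiberwise maximizer by the $r$-incomplementability argument above; this gives (b).

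For (c), at any $x\in\tilde K_\rho$ with $\tilde h(x)<0$ the value $\tilde g(x)$ is strictly less than the fiberwise maximum, so a fiber-preserving gradient-like curve from $x$ toward a maximizer produces a direction $\xi$ at $x$ with $(\tilde f_i)'_x(\xi)=0$ and $\tilde g'_x(\xi)>0$; combining $\xi$ with the regularity directions of $\tilde f$ delivers the required regularity direction of $(\tilde f, \tilde g)$ at $x$. The main obstacle is (d), uniqueness of the fiberwise maximizer on $\tilde h^{-1}(0)\cap\tilde K_\rho$. I treat this as in Perelman's argument but $r$-equivariantly: if two distinct maximizers $x_1, x_2 \in \tilde f^{-1}(v)\cap\tilde h^{-1}(0)\cap\tilde K_\rho$ persisted, I would perturb $\tilde g$ by an $r$-equivariant bump supported near one of the $x_i$ (taken genuinely $r$-fixed when it meets $\partial M$, and otherwise symmetrized by $r_*$), breaking the tie while preserving semiconcavity and $r$-equivariance. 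The perturbed fiberwise maximum would then yield a direction at $p$ lying in $C^+_p$ along which $\tilde g'$ is strictly positive and the required inner-product conditions with each $(\tilde f_i)'_p$ are satisfied, contradicting the $r$-incomplementability of $f$ at $p$. Carrying the equivariance through the bump construction in all strata of $\tilde K_\rho$ meeting $\partial M$ is the delicate point, but it follows from the symmetry of the configurations used in building $\tilde g$.
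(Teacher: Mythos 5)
Your overall strategy — rebuild Perelman's incomplementable-function construction so that every object is $r$-equivariant, then invoke the original \cite{Per}/\cite{Per Alex II} arguments — is the same as the paper's. But there are two concrete differences, one mostly cosmetic and one that is a genuine gap.

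The cosmetic one: your candidate $\tilde g(x)=\min_j\chi\bigl(\tilde d_{q_j}(x)+\tilde d_{r(q_j)}(x)\bigr)$ differs from what the paper uses. The paper picks the regularity direction $\xi$ so that $\xi\in\Sigma_p(\partial M)$, hence the anchor point $q$ lies on $\partial M$; then a \emph{single} average $\tilde g=\frac1N\sum_\alpha\chi(|q_\alpha\cdot|)$ over an $r$-invariant discrete set $\{q_\alpha\}\subset\tilde B(q,\e|pq|/4)\cap\partial\tilde B(p,|pq|)$ is automatically $r$-equivariant and coincides with Perelman's original form, so the estimates from \cite{Per} apply verbatim. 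Your min-of-$\chi$-of-sums is a different functional whose directional derivative at $p$ does not equal the one Perelman computes, so you cannot cite his regularity/Lipschitz estimates without redoing them; you also never observe that $\xi$ can be taken boundary-tangent, which is exactly the simplification that makes the equivariance painless. This is repairable, but it silently strays from the argument you intend to transplant.

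The genuine gap is in your justification of (b) and (d). You assert that incomplementability forces $\tilde g'_p\le 0$ on the positive cone $C^+_p$, and that this makes $p$ a fiberwise maximizer of $\tilde g$. Neither step is supported. First, $r$-incomplementability rules out the existence of an $f_{k+1}$ making $(\tilde f,\tilde f_{k+1})$ \emph{regular} at $p$, which requires both a common ascent direction \emph{and} the negativity of the pairwise inner products $\langle(\tilde f_i)'_p,(\tilde f_{k+1})'_p\rangle$; having $\tilde g'_p>0$ somewhere on $C^+_p$ does not by itself produce a complement unless you also verify the inner-product conditions for your specific $\tilde g$, which you never do. Second, even granted a sign condition on the derivative at $p$, that is local linear information and does not show that $p$ \emph{globally} maximizes $\tilde g$ on the fiber $\tilde f^{-1}(f(p))\cap D(V)$, which is what $\tilde h(p)=0$ requires; fibers of $\tilde f$ are not geodesically convex, so strict concavity alone does not finish this. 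Finally, your perturbation argument for (d) — adding an $r$-equivariant "bump" near a putative second maximizer while preserving both semiconcavity and the regularity/inner-product structure — is not something the paper attempts (it simply defers to Perelman's original proof after noting all constructions can be made equivariant), and as sketched it is not clear it can be carried out: generic bump perturbations destroy semiconcavity, and an equivariant bump supported off $\partial M$ must be supported on an $r$-orbit of two components, which complicates, rather than immediately yields, a complement of $f$. The paper sidesteps all of this by matching Perelman's construction exactly and inheriting (a)--(d); your route replaces that inheritance with claims that would need their own proofs.
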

All the statements follows from the proof of the original corresponding statements in \cite{Per Alex II} and \cite{Per} with some modification. 
\begin{proof}
We are going to show that all objects appeared in the original corresponding statements in \cite{Per}, \cite{Per Alex II} can be constructed in an equivariant way, in our case. 

Since $\tilde f$ is regular at $p$, there is a direction $\xi \in \Sigma_p D(M)$ such that $(\tilde f_i)_p'(\xi) > \epsilon$ for all $i$, where $\epsilon$ is a positive number. 
Note that such a $\xi$ can be assumed to be in $\Sigma_p (\partial M)$. 
Around $p$, we may assume that all $\tilde f_i$ are $\lambda$-concave for some $\lambda > 0$. 
Take a point $q$ in the direction $\xi$, we have 
\[
\tilde f_i(q) > \tilde f_i(p) + \epsilon |pq| + \lambda|pq|^2
\]
for all $i = 1, \dots, k$. 
We may assume that $q \in \partial M$. 
For some $\delta > 0$ smaller than a constant depending on $\epsilon, \lambda, |pq|$, we have 
\[
\tilde f_i(y) > \tilde f_i(x) + (\epsilon/4) |xy| + (\lambda/2) |xy|^2
\]
for all $x \in \tilde B(p, \delta)$ and $y \in \tilde B(q, \epsilon|pq|/4)$.
For a fine discrete set $\{q_\alpha\}_{\alpha=1}^N \subset \tilde B(q, \epsilon|pq|/4) \cap \partial \tilde B(p,|pq|)$, the function 
\[
\tilde g = \frac{1}{N} \sum_{\alpha=1}^{N} \chi (|q_\alpha \cdot|)
\]
is strictly concave on $\tilde B(p,\delta)$, for some concave increasing real-to-real function $\chi$ which is strictly concave near $|pq|$. 
Now, we should note that $\{q_\alpha\}$ can be taken to be $r$-invariant. 
Hence, the function $\tilde g$ is $r$-equivariant. 
Let us set $V := B(p, \delta)$ and $g$ the restriction of $\tilde g$ to $V$.
We define a function $H : f(V) \to \mathbb R$ by $H(v) = \max \{g(x) \mid x \in V \cap f^{-1}(v) \} = \max \{\tilde g(x)\mid x \in D(V) \cap \tilde f^{-1}(v)\}$.  
Then, a function
\[
\tilde h = \tilde g - H \circ \tilde f
\]
is $r$-equivariant. 
From the proof of the original statements, all properties (a), (b), (c) and (d) hold, for constructed $V,g$ and $H$ in our case.
We leave the complete proof to readers. 
\end{proof}
We give several immediate consequences of Lemma \ref{lem:equiv regular nbd}.

By the property (c) in Lemma \ref{lem:equiv regular nbd} and the compactness of $\tilde K_\rho${, and by the original Fibration Theorem \ref{thm:original fibration}, the map $(\tilde h, \tilde f) : \tilde K_\rho \setminus \tilde h^{-1}(0) \to [-2 \rho,0) \times D^k(\rho)$ is a fiber bundle.}
Here, 
$D^k(\rho)$ is given as 
\begin{align*}
D^k(\rho) &= \tilde f(\tilde K_\rho) = \{v \in \mathbb R^k \mid \|v-f(p)\| \le \rho\}.
\end{align*}
Since the image $[-2 \rho, 0) \times D^k(\rho)$ is contractible, there is a homeomorphism 
\begin{equation} \label{eq:trivialization}
\varphi : \tilde K_\rho \setminus \tilde h^{-1}(0) \to \tilde \Pi_\rho \times [-2\rho, 0) \times D^k(\rho)
\end{equation}
respecting $(\tilde h, \tilde f)$. 
Here, $\tilde \Pi_\rho$ is a regular fiber of $(\tilde h, \tilde f)$ in $\tilde K_\rho$ given as 
\[
\tilde \Pi_\rho = \tilde f^{-1}(f(p)) \cap \tilde h^{-1}(-\rho) \cap \tilde K_\rho
\]
which is an $(n-k)$-dimensional compact MCS-space, by the original fibration theorem (\cite{Per DC}). 
The set $\tilde \Pi_\rho$ is $r$-invariant, by the definition. 

We prepare some symbols which will be used later. 
Let us set 
$K_\rho = \tilde K_\rho \cap U$, 
$\Pi_\rho = \tilde \Pi_\rho \cap U$, and  
$h = g - H \circ f$. 

Note that by (d), we have $\tilde K_\rho \cap \tilde h^{-1}(0) \subset \partial M$. 
Indeed, the set $\tilde f^{-1}(v) \cap \tilde K_\rho \cap \tilde h^{-1}(0)$ is $r$-invariant, by the definition, and is a single-point set by (d). 
Therefore, it is contained in $\partial M$. 
Remark that, $\tilde K_\rho \cap \tilde h^{-1}(0)$ does not coincide with $\tilde K_\rho \cap \partial M$, in general. 

\subsection{A fibration theorem in our case}
From now on, 
we fix the following situation. 
Let $f_1, \dots, f_k$ be locally Lipschitz functions defined on an open subset $U$ of an $n$-dimensional Alexandrov space $M$ with boundary. 
We consider the case $U \cap \partial M \neq \emptyset$ and the canonical extension $\tilde f_i$ on $D(U)$ is semiconcave for every $i$. 

\begin{lemma} \label{lem:k<n}
Let $f = (f_i)$, $U$ and $M$ be as above.  
Suppose that $\tilde f = (\tilde f_i)$ is regular at some $p \in D(U) \cap \partial M$. 
Then, we have $k \le n-1$.
Further, if $\tilde f$ is regular on $U$, then $f: U \to \mathbb R^k$ is an open map.   
\end{lemma}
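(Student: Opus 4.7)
The plan is to prove the two conclusions separately, both by reducing to the original fibration theorem (Theorem~\ref{thm:original fibration}) applied to the doubled map $\tilde f$ on $D(U)$.

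For the dimension bound, I argue by contradiction: suppose $k=n$. The map $\tilde f$ is regular at $p\in D(U)$ and $D(M)$ is an $n$-dimensional Alexandrov space without boundary, so Theorem~\ref{thm:original fibration} realizes $\tilde f$ near $p$ as a topological submersion with $(n-k)=0$-dimensional fibers, i.e.\ discrete fibers. A topological submersion with discrete fibers is a local homeomorphism, so there is an open neighborhood $W$ of $p$ in $D(U)$ on which $\tilde f$ is injective. Since $r(p)=p$ and $r$ is an isometry, $W\cap r(W)$ is still an open neighborhood of $p$; replacing $W$ by this intersection, I may assume $W$ is $r$-invariant. As $\partial M$ is nowhere dense in $D(M)$, $W$ contains some $x\notin\partial M$. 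Then $r(x)\neq x$, $r(x)\in W$, and $r$-equivariance of $\tilde f$ forces $\tilde f(x)=\tilde f(r(x))$, contradicting injectivity of $\tilde f|_W$. Hence $k\le n-1$.

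For the openness statement, assume now that $\tilde f$ is regular on $U$. Because $r$ is an isometry and $\tilde f\circ r=\tilde f$, both conditions defining regularity (the existence of a direction on which all differentials are positive, and the negativity of pairwise inner products of spherical differentials) are preserved under $r$, so regularity propagates from $U$ to $r(U)$ and hence holds on all of $D(U)=U\cup r(U)$. By Theorem~\ref{thm:original fibration}, $\tilde f:D(U)\to\mathbb R^k$ is an open map. For any open $V\subset U$, the double $D(V)=V\cup r(V)$ is open in $D(U)$: at points of $V\setminus\partial M$ and $r(V)\setminus\partial M$ this is immediate, while at $q\in V\cap\partial M$ a sufficiently small metric ball in $V\subset M$ doubles to an open ball of $D(M)$ contained in $D(V)$. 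By equivariance $f(V)=\tilde f(D(V))$, which is therefore open in $\mathbb R^k$.

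The main obstacle is the injectivity-versus-equivariance step in the first part. Everything hinges on the ability to make the local-homeomorphism neighborhood $r$-invariant, which is only possible because $p$ is a fixed point of $r$; otherwise one could not guarantee that $r(x)$ lies in the same injectivity neighborhood as $x$. Once this is secured the contradiction is immediate, and the openness statement then follows routinely from Theorem~\ref{thm:original fibration} together with the simple observation that doubles of open subsets of $U$ remain open in $D(U)$.
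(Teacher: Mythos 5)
Your proof is correct and takes essentially the same route as the paper: the dimension bound is obtained by applying the original fibration theorem to $\tilde f$ on the double to get local injectivity near $p$, then observing that $r$-equivariance together with $r(p)=p$ and the existence of interior points in any neighborhood forces a collision $\tilde f(x)=\tilde f(r(x))$ with $x\neq r(x)$; and openness follows by doubling open sets and invoking openness of $\tilde f$ on $D(U)$. Your write-up is in fact a little more careful than the paper's at two points the paper glosses over, namely the need to shrink to an $r$-invariant neighborhood and the propagation of regularity from $U$ to $D(U)$ via $\tilde f\circ r=\tilde f$.
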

\begin{proof}
Since $\tilde f$ is regular at $p$, by \cite{Per DC}, we have $k \le n$. 
We suppose $k=n$. 
Let $p \in U \cap \partial M$. 
The original fibration theorem states that $\tilde f$ is homeomorphic near $p$ in $D(U)$. 
That is, there is a neighborhood $V$ of $p$ in $U$ such that $\tilde f : D(V) \to \mathbb R^n$ is an embedding. 
The restriction $f : V \to \mathbb R^n$ is also an embedding. 
However, because $\tilde f$ is $r$-equivariant, the images $f(V)$ and $\tilde f(D(V))$ coincide. 
It is a contradiction.
Hence, we have $k \le n-1$. 

Suppose that $\tilde f$ is regular on $D(U)$. 
By Theorem \ref{thm:original fibration}, the map $\tilde f : D(U) \to \mathbb R^k$ is an open map. 
For any open set $O$ in $U$, its double $D(O)$ is open in $D(U)$. 
Therefore, the map $f : U \to f(U)$ is also an open map. 
Hence, the letter conclusion is proved. 
\end{proof}

\begin{theorem}\label{thm:fibration with boundary}
Let $f= (f_i)$, $U$ and $M$ be as above.  
Suppose that $\tilde f = (\tilde f_i)$ is regular on $D(U)$. 
Then, we have 
\begin{itemize}
\item[(A)] For every $v \in f(U)$, its fiber $f^{-1}(v)$ is an $(n-k)$-dimensional MCS-space. 
\item[(B)] $f : U \to f(U)$ is a topological submersion. 
\item[(C)] If $f : U \to f(U)$ is proper, then it is a fiber bundle. 
\end{itemize}
\end{theorem}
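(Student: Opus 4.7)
By Siebenmann's Theorem \ref{thm:Siebenmann}, (C) follows formally from (A) and (B), so my plan is to prove (A) and (B) by local analysis. For an interior point $x \in U \setminus \partial M$, a small neighborhood of $x$ in $M$ coincides with one in $D(U)$, so the original fibration Theorem \ref{thm:original fibration} applied to $\tilde f$ immediately provides an $(n-k)$-dimensional MCS fiber through $x$ and a product chart for $f$ by restriction. The real work is at a boundary point $p \in U \cap \partial M$.

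At such a $p$, I iteratively add regular coordinates: while the current extended map $(\tilde f, \tilde f_{k+1}, \dots, \tilde f_{k'})$ is $r$-complementable at $p$, pick $f_{k'+1}$ on a neighborhood of $p$ in $U$ so that $\tilde f_{k'+1}$ is semiconcave on the doubled neighborhood and $(\tilde f, \tilde f_{k+1}, \dots, \tilde f_{k'+1})$ is regular. By Lemma \ref{lem:k<n} this process terminates at some $k' \le n-1$ with an $r$-incomplementable extended map $F := (\tilde f_1, \dots, \tilde f_{k'})$. I then apply Lemma \ref{lem:equiv regular nbd} to $F$ to obtain $V, g, H, \tilde h$ satisfying (a)--(d) together with the trivialization \eqref{eq:trivialization}.

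Property (d) identifies $\tilde K_\rho \cap \tilde h^{-1}(0) \subset \partial M$ with a continuous section of $F$ over $D^{k'}(\rho)$, and the trivialization extends conically across this section to a homeomorphism $\bar\varphi : \tilde K_\rho \to C(\tilde \Pi_\rho) \times D^{k'}(\rho)$ respecting $F$, where $C$ denotes the topological cone with apex along $\tilde h = 0$. Since $\tilde f_i, \tilde g, \tilde h$ are all $r$-equivariant, Perelman's construction of the trivializing gradient-like flow can be carried out using $\mathbb{Z}_2$-invariant nets and vector selections, producing an $r$-equivariant $\bar\varphi$ in which $r$ acts diagonally on $C(\tilde \Pi_\rho)$ via a conical involution fixing the apex. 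Intersecting with $M$ then yields $K_\rho \cong C(\Pi_\rho) \times D^{k'}(\rho)$ with $\Pi_\rho := \tilde \Pi_\rho \cap M$; since $K_\rho$ is open in the Alexandrov space $M$ and hence an MCS-space, the factor $C(\Pi_\rho)$ is an $(n-k')$-dimensional MCS-space. Under $\bar\varphi$, the original $f$ corresponds to projection onto the first $k$ coordinates of the $D^{k'}(\rho)$ factor, so $f^{-1}(v) \cap K_\rho \cong C(\Pi_\rho) \times D^{k'-k}(\rho)$, an $(n-k)$-dimensional MCS-space, and the projection $K_\rho \to D^k(\rho)$ furnishes the required product chart for $f$ at $p$.

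The main obstacle is producing the $r$-equivariant trivialization $\bar\varphi$ in the incomplementable case. Lemma \ref{lem:equiv regular nbd} already supplies $r$-invariant auxiliary functions, but I still need to verify that Perelman's trivializing flow underlying Theorem \ref{thm:original fibration} can itself be made $\mathbb{Z}_2$-equivariant. This should be achievable because the regularity conditions are $r$-invariant and every admissible vector selection has an $r$-equivariant counterpart obtained by averaging over $\langle r \rangle$; any remaining subtlety will stem from ensuring that the equivariant selection still satisfies Perelman's quantitative positivity bounds uniformly in a neighborhood of $p$.
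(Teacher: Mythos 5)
Your high-level plan shares the paper's skeleton (Siebenmann reduces (C) to (A)+(B); interior points come from the original fibration theorem; at boundary points split into $r$-complementable/$r$-incomplementable), but the route through the $r$-incomplementable case diverges, and that divergence introduces two genuine gaps.

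\textbf{The structural difference.} The paper argues by \emph{backward induction on the number of components $k$}, starting at $k=n-1$ (where fibers are $1$-dimensional, $\tilde\Pi_\rho$ is a two-point set, and everything is easy) and descending. At stage $k$, the $r$-incomplementable case is handled by applying the already-proved statements (A)$_{k+1}$ and (C)$_{k+1}$ \emph{directly on the one-sided set} $K_\rho\setminus\tilde h^{-1}(0)\subset M$, which still meets $\partial M$: (C)$_{k+1}$ gives the trivialization $\varphi: K_\rho\setminus\tilde h^{-1}(0)\to\Pi_\rho\times[-2\rho,0)\times D^k(\rho)$, and (A)$_{k+1}$ says $\Pi_\rho$ is an $(n-k-1)$-dimensional MCS-space. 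No equivariance of the trivialization is ever needed. You instead go ``forward'': pile on coordinates until the extended map $F$ is $r$-incomplementable, apply the original fibration theorem on the \emph{double} $\tilde K_\rho\setminus\tilde h^{-1}(0)$, and then try to descend to $M$ by making the trivializing homeomorphism $r$-equivariant. Because you work with $F$ (having $k'>k$ components) and are trying to prove the very theorem you would need at level $k'+1$, you cannot invoke any inductive hypothesis and therefore have no way to trivialize $(f,g)$ on $K_\rho\setminus\tilde h^{-1}(0)$ inside $M$; hence your only option is the equivariant double argument.

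\textbf{Gap 1 (you acknowledge it, but it is larger than you suggest).} Producing an $r$-equivariant trivialization from Perelman's proof is not a matter of averaging vector selections. The proof of Theorem \ref{thm:original fibration} (and its admissible predecessor) passes through Siebenmann's deformation theory of homeomorphisms of stratified sets, which involves the Kirby--Siebenmann local contractibility machinery; there is no ready-made $\mathbb Z_2$-equivariant version of Theorem \ref{thm:Siebenmann}, and proving one would be a separate substantial piece of equivariant topology, not a technicality. The paper's induction is designed precisely to sidestep this.

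\textbf{Gap 2 (not acknowledged, and the given argument is incorrect).} You conclude that $C(\Pi_\rho)$ is an $(n-k')$-dimensional MCS-space ``since $K_\rho$ is \ldots an MCS-space'' and $K_\rho\cong C(\Pi_\rho)\times D^{k'}(\rho)$. Being a factor in a product decomposition of an MCS-space does not imply being an MCS-space; the existence of conical neighborhoods does not descend along a product splitting (this is the same kind of difficulty as the classical phenomenon that $X\times\mathbb R$ being a manifold does not force $X$ to be one). In the paper, the fact that $\Pi_\rho$ is an $(n-k-1)$-dimensional MCS-space is exactly the inductive hypothesis (A)$_{k+1}$ applied to the regular $(k{+}1)$-tuple $(f,g)$; without the backward induction you have no such statement available, and no alternative derivation is offered.

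In short: your outline identifies the right ingredients but, by abandoning the backward induction on $k$, forfeits both the trivialization on $K_\rho\setminus\tilde h^{-1}(0)\subset M$ and the MCS property of $\Pi_\rho$, and the equivariant-Siebenmann route you propose to compensate is both unproved and considerably harder than the inductive argument it would replace.
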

\begin{proof}
The properties (A), (B) and (C) for $k$, where $k$ is the dimension of the target of $f$, are denoted by (A)$_k$, (B)$_k$ and (C)$_k$.
We prove the properties by the backward induction on $k$ as the proof of the original fibration theorem. 
By Theorem \ref{thm:Siebenmann}, if (A)$_k$ and (B)$_k$ hold, then (C)$_k$ holds. 


Let us prove (A)$_{n-1}$ and (B)$_{n-1}$. 
To prove them, we find a product neighborhood at a point $p \in U \cap \partial M$ with respect to $f$. 
Note that every point in $U \setminus \partial M$ already has a product neighborhood, by the original fibration theorem (\cite{Per DC}). 
By Lemma \ref{lem:k<n}, $f$ is $r$-incomplementable at $p$. 
Then, there exist an open neighborhood $V$ of $p$ in $U$, a Lipschitz function $g$ defined on $V$ and a Lipschitz function $H : f(V) \to \mathbb R$ satisfying the conclusion of Lemma \ref{lem:equiv regular nbd}. 
By using them, we have a product chart at $p$ in $\tilde K_\rho$ with respect to $(\tilde f, \tilde h)$ as the following way. 
First, by properties (a) and (c) in Lemma \ref{lem:equiv regular nbd} and by the original fibration theorem, we obtain a homeomorphism 
\[
\varphi : \tilde K_\rho \setminus \tilde h^{-1}(0) \to \tilde \Pi_\rho \times [-2 \rho, 0) \times D^{n-1}(\rho)
\]
respecting $(\tilde h, \tilde f)$ as in \eqref{eq:trivialization}. 
By the property (d), we have a canonical extension $\psi$ of $\varphi$ 
\[
\psi : \tilde K_\rho \to \bar K(\tilde \Pi_\rho) \times D^{n-1}(\rho)
\]
which is a homeomorphism respecting $\tilde f$. 
Let us set $\tilde F_v = \tilde f^{-1}(v) \cap \tilde K_\rho$ and denote by $p(v)$ the unique point contained in $\tilde F_v \cap \tilde h^{-1}(0)$ for $v \in D^{n-1}(\rho)$. 
In particular, $p(f(p)) = p$. 
Then, we have $(\tilde F_v, p(v))$ is homeomorphic to $(\bar K(\tilde \Pi_\rho), o)$. 
Since the relative interior of the fiber $\tilde F_v$ is a one-manifold without boundary, $\tilde \Pi_\rho$ must be a two-points set. 
Because $\tilde \Pi_\rho$ is $r$-invariant, the set $\Pi_\rho$
consists of only one point. 
We observe that $\tilde F_v \cap \tilde h^{-1}(0) = \tilde F_v \cap \partial M$. 
Indeed, by the remark after Lemma \ref{lem:equiv regular nbd}, we know that $\tilde F_v \cap \tilde h^{-1}(0) \subset \partial M$.
We suppose that $\tilde F_v$ meets $\partial M$ at least two points. 
Then, since $\tilde F_v$ is $r$-invariant, $\tilde F_v$ contains a circle, which contradicts to that $\tilde F_v$ is an interval. 
Therefore, we have $\tilde F_v \cap \tilde h^{-1}(0) = \tilde F_v \cap \partial M$.
Let us set $F_v = f^{-1}(v) \cap K_\rho$. 
We conclude that $(\tilde F_v, F_v, p(v))$ is 
homeomorphic to $([-1,1], [0,1], 0)$ as triples of spaces.  
In particular, 
(A)$_{n-1}$ holds. 
Further, we know that 
the restriction of $\varphi$ to $K_\rho \setminus \tilde h^{-1}(0) = K_\rho \setminus \partial M$ has the image $\Pi_\rho \times [-2 \rho, 0) \times D^{n-1}(\rho)$. 
Therefore, the restriction of $\psi$ to $K_\rho$ is a homeomorphism with the target $\bar K(\Pi_\rho) \times D^{n-1}(\rho)$ respecting $f$. 
It gives a product chart at $p$ of $f$. 
This completes the proof of (B)$_{n-1}$. 


Next, we are going to prove (A)$_k$ and (B)$_k$ for $k < n-1$, assuming (A)$_{k+1}$, (B)$_{k+1}$ and (C)$_{k+1}$. 
Let $F = f^{-1}(v)$ for $v \in f(U)$. 
We already know that every point in $F \setminus \partial M$ has a conical neighborhood as an MCS-space, by the original fibration theorem (\cite{Per DC}). 
We may assume that $F \cap \partial M$ is not the empty-set, and take a point $p$ in the intersection. 
To prove (A)$_k$ and (B)$_k$, we find a conical neighborhood at $p$ in $F$ as an MCS-space and a product neighborhood at $p$ with respect to $f$. 
If $f$ is $r$-complementable at $p$, then there is a function $f_{k+1}$ defined near $p$ such that $\tilde f_{k+1}$ is semiconcave and $(\tilde f, \tilde f_{k+1})$ is regular at $p$ in the double of a neighborhood of $p$. 
Then, by (B)$_{k+1}$, we have a product chart 
\[
\theta : V \to (f^{-1}(f(p)) \cap f_{k+1}^{-1}(f_{k+1}(p)) \cap V) \times (f,f_{k+1})(V)
\]
at $p$ of $(f,f_{k+1})$. 
Taking $V$ to be small, we may assume that the image of $(f,f_{k+1})$ is the form $(f,f_{k+1})(V) = f(V) \times (a,b)$, where $f_{k+1}(V) = (a,b)$. 
Let us set $F := f^{-1}(f(p)) \cap f_{k+1}^{-1}(f_{k+1}(p)) \cap V$, which is an $(n-k-1)$-dimensional MCS-space. 
We obtain a product chart 
\[
\eta : V \to (f^{-1}(f(p)) \cap V) \times f(V)
\]
of $f$. 
Then, $f^{-1}(f(p)) \cap V$ is homoeomorphic to $F \times (a,b)$, which is an $(n-k)$-dimensional 
MCS-space. 
Therefore, in this case, (A)$_k$ and (B)$_k$ are proved. 

We next assume that $f$ is $r$-incomplementable at $p$. 
Then, there exist a neighborhood $V$ of $p$, a function $g$ defined on $V$ and a function $H : f(V) \to \mathbb R$ satisfying the conclusion of Lemma \ref{lem:equiv regular nbd}.  
Since $(\tilde f, \tilde g)$ is regular on $\tilde K_\rho \setminus \tilde h^{-1}(0)$, by (C)$_{k+1}$, we have a homeomorphism 
\[
\varphi : K_\rho \setminus \tilde h^{-1}(0) \to \Pi_\rho \times [-2 \rho, 0) \times D^k(\rho)
\]
respecting $(h,f)$. 
The space $\Pi_\rho$ is an $(n-k-1)$-dimensional MCS-space, by (A)$_{k+1}$. 
By (d) in Lemma \ref{lem:equiv regular nbd}, $\varphi$ has a canonical extension 
\[
\psi : K_\rho \to \bar K(\Pi_\rho) \times D^k(\rho)
\]
which is a homeomorphism respecting $f$. 
Then, it gives a product chart at $p$ with respect to $f$. 
This implies (B)$_k$. 
Further, by the construction of $\psi$, we have that $(f^{-1}(f(p)) \cap K_\rho, p)$ is homeomorphic to $(\bar K(\Pi_\rho), o)$. 
Therefore, $f^{-1}(f(p))$ is an $(n-k)$-dimensional MCS-space. 
Hence, (A)$_k$ is proved. 
This completes the proof of Theorem \ref{thm:fibration with boundary}.
\end{proof}

\begin{remark} \upshape
As Lemma \ref{lem:codim one fiber}, we can prove that each fiber in Fibration Theorems \ref{thm:original fibration} and \ref{thm:fibration with boundary} belongs to some restricted class of MCS-spaces, if $k$ is general. 
For instance, it is represented as a non-branching MCS-space introduced in \cite{HS}. 
The proof will appear in a forthcoming paper. 
\end{remark}



\begin{thebibliography}{99999999}
\bibitem{BT} 
R.~Bott and L.~W.~Tu. 
Differential Forms in Algebraic Topology. 
Graduate Studies in Mathematics, 82. 
Springer.

\bibitem{BBI}
D.~Burago, Yu.~Burago, and S.~Ivanov.
A course in metric geometry.
Graduate Studies in Mathematics, 33. 
American Mathematical Society, Providence, RI, 2001. xiv+415 pp.

\bibitem{BGP} 
Yu.~Burago, M.~Gromov, and G.~Perel'man.
A. D. Aleksandrov spaces with curvatures bounded below.
Uspekhi Mat. Nauk 47 (1992), no. 2(284), 3--51, 222, 
translation in Russian Math. Surveys 47 (1992), no. 2, 1--58.

\bibitem{Ch} J.~Cheeger, Finiteness theorems for Riemannnian manifolds, 
Amer. J. Math., 92(1970), 61-74.

\bibitem{Gr} M.~Gromov. Curvature, diameter and Betti numbers, 
Comment. Math. Helv. 56(1981), 179-195.

\bibitem{GP} K.~Grove and P.~Petersen. Bounding homotopy types by geometry, 
Ann. of Math. 128(1988), 195-206.

\bibitem{GP1} K.~Grove and P.~Petersen. 
A radius sphere theorem, Invent. Math. 112 (1993), no. 3, 577–-583. 

\bibitem{GPW} K.~Grove, P.~Petersen and J. Y. ~Wu. 
Geometric finiteness theorems via controlled topology,
Invent. Math. 99(1990), 205-213.


\bibitem{HS} J.~Harvey and C.~Searle. 
Orientation and symmetries of Alexandrov spaces with applications in positive curvature, 
arXiv:1209.1366v3.

\bibitem{Kap}
V.~Kapovitch. 
Regularity of limits of noncollapsing sequences of manifolds. 
Geom. Funct. Anal. 12(1), 121--137 (2000).

\bibitem{Kap stab} V.~Kapovitch. Perelman's stability theorem,
Surveys of Differential Geometry, Metric and Comparison Geometry, vol. XI, 
International press, (2007), 103--136.

\bibitem{Kw}
K.~W.~Kwun. Uniqueness of the open neighborhood. 
Proc. Amer. Math. Soc. 15 (1964), 476--479.

\bibitem{MY:stab} A. ~Mitsuishi and T.~ Yamaguchi, Stability of strongly Lipschitz contractible balls in 
Alexandrov spaces, Math. Z. 277(2014),995-1009. 

\bibitem{MY:SLC}A. ~Mitsuishi and T.~ Yamaguchi, Locally strongly Lipschitz contractibility of 
Alexandrov spaces, Pacific J. 270 (2014),393—421.

\bibitem{deR} G. de Rham. Complexes \'a automorphismes et hom\'eomorphie diff\'erentiable,
Ann. Inst. Fourier (Grenoble) 2 (1950), 51—67.



\bibitem{Per Alex II} G.~Perelman. 
A. D. Alexandrov's spaces with curvatures bounded from below. II, Preprint.

\bibitem{Per}
G.~Perelman. 
Elements of Morse theory on Alexandrov spaces, 
St. Petersburg Math. J. 5 (1994) 207--214.

\bibitem{Per DC} G.~Perelman.
DC Structure on Alexandrov Space. Preprint.

\bibitem{PP}
G.~Perelman and A.~Petrunin.
Quasigeodesics and gradient curves in Alexandrov spaces. 
Preprint.

\bibitem{Pets} 
P.~Petersen. A finiteness theorem for metric spaces, 
J. Diff. Geometry, 31(1990), 387-395.

\bibitem{Pet:multi}
A.~Petrunin. 
Quasigeodesics in multidimensional Alexandrov spaces.
Thesis (Ph.D.)-University of Illinois at Urbana-Champaign. 1995. 92 pp.

\bibitem{Pet} 
A.~Petrunin. 
Semiconcave functions in Alexandrov's geometry.
Surveys in Comparison Geometry, 2007.

\bibitem{deR} 
G. de\, Rham. Complexes \'a automorphismes et hom\'eomorphie diff\'erentiable,
Ann. Inst. Fourier (Grenoble)2 (1950), 51-67.


\bibitem{Sie}
L.~C.~Siebenmann. 
Deformation of homeomorphisms on stratified sets. I, II. 
Comment. Math. Helv. 47 (1972), 123--136; ibid. 47 (1972), 137--163.

\bibitem{We} A.~Weinstein. On the homotopy types of positively pinced manifolds, 
Arch. Math. 18(1967), 523-524.

\bibitem{Yam:homo} T.~Yamaguchi. Homotopy type finiteness theorems for
certain precompact families of Riemannian manifolds, Proc. Amer. Math. Soc., 102(1988), 660-666.\par
\medskip

\bibitem{Yam:essen} T.~Yamaguchi. Collapsing and essential coverings, arXiv:1205.0441.
\end{thebibliography}
\end{document}